\theoremstyle{definition}
\newtheorem{theorem}{Theorem}[section]
\newtheorem{corollary}[theorem]{Corollary}
\newtheorem{lemma}[theorem]{Lemma}
\begin{document}

\title[Strong Marker Sets and Applications]{Strong Marker Sets and Applications}

\author{Su Gao}
\address{School of Mathematical Sciences and LPMC, Nankai University, Tianjin 300071, P.R. China}
\email{sgao@nankai.edu.cn}
\thanks{The authors acknowledge the partial support of their research by the National Natural Science Foundation of China (NSFC) grants 12250710128 and 12271263.}

\author{Tianhao Wang}
\address{School of Mathematical Sciences and LPMC, Nankai University, Tianjin 300071, P.R. China}
\email{tianhao\_wang@qq.com}

\date{\today}

\begin{abstract} We prove the existence of clopen marker sets with some strong regularity property. For each $n\geq 1$ and any integer $d\geq 1$, we show that there are a positive integer $D$ and a clopen marker set $M$ in $F(2^{\mathbb{Z}^n})$ such that
\begin{enumerate}
\item[(1)] for any distinct $x,y\in M$ in the same orbit, $\rho(x,y)\geq d$;
\item[(2)] for any $1\leq i\leq n$ and any $x\in F(2^{\mathbb{Z}^n})$, there are non-negative integers $a, b\leq D$ such that $a\cdot x\in M$ and $-b\cdot x\in M$.
\end{enumerate}
As an application, we obtain a clopen tree section for $F(2^{\mathbb{Z}^n})$. Based on the strong marker sets, we get a quick proof that there exist clopen continuous edge $(2n+1)$-colorings of $F(2^{\mathbb{Z}^n})$. We also consider a similar strong markers theorem for more general generating sets. In dimension 2, this gives another proof of the fact that for any generating set $S\subseteq \mathbb{Z}^2$, there is a continuous proper edge $(2|S|+1)$-coloring of the Schreier graph of $F(2^{\mathbb{Z}^n})$ with generating set $S$.
\end{abstract}

\maketitle
\section{Introduction}    
The construction of marker sets is an indispensable tool in the study of descriptive set theory of countable Borel equivalence relations. What we now informally call {\em marker sets} used to be called {\em cross sections} (e.g. in \cite{We84}) or {\em complete sections} (e.g. in \cite{DJK94}). The informal terminology seems to have appeared first in \cite{JKL02} to emphasize the geometric nature of the orbits, but the concept had been studied much earlier (see, e.g., \cite{SS88} and \cite{BK96}).  

Recall that an equivalence relation $E$ on a Polish space $X$ is {\em Borel} if $E$ is a Borel subset of $X^2$, and $E$ is {\em countable} if every equivalence class of $E$ is countable. By considering the Bernoulli shift actions of countable discrete groups, we give structures to all countable Borel equivalence relations. In fact, by a theorem of Feldman--Moore 
(\cite[Theorem 1]{FM77}), any countable Borel equivalence relation is the orbit equivalence relation of a Borel action of some countable group. 

More concretely, let $\Gamma$ be a countable group and consider the {\em Bernoulli shift action} of $\Gamma$ with alphabet $2^{\mathbb{N}}$ defined as follows. This is the action of $\Gamma$ on $(2^{\mathbb{N}})^\Gamma$, where for any $g, h\in \Gamma$ and $x\in (2^\mathbb{N})^\Gamma$, 
$$ (g\cdot x)(h)=x(hg). $$
The space $(2^{\mathbb{N}})^\Gamma$ is a zero-dimensional Polish space, and the Bernoulli shift action is continuous.  By a theorem of Jackson--Kechris--Louveau (\cite[Proposition 4.2]{JKL02}), all Borel actions of $\Gamma$ can be equivariantly embedded into the Bernoulli shift action of $\Gamma$ with alphabet $2^\mathbb{N}$. Thus, in a sense, this single continuous action of $\Gamma$ contains the full information of all Borel actions of $\Gamma$ on Polish spaces.

In order to obtain geometric structures for the orbits, we often make two additional assumptions. The first is freeness of the action of $\Gamma$. Recall that an action of $\Gamma$ on $X$ is {\em free} if for any $x\in X$ and non-identity $g\in G$, $g\cdot x\neq x$. In the case of a free action of $\Gamma$, each orbit will naturally inherit the structure of $\Gamma$. Secondly, we often assume that $\Gamma$ is finitely generated. If we fix a finite generating set $S$ for $\Gamma$ and consider the Cayley graph of $(\Gamma, S)$, this graph structure will be tranferred to each orbit (in the case of a free action), which is called the {\em Schreier graph} of the action of $\Gamma$ with generating set $S$. In addition, the path metric on the Cayley graph (and therefore on the Schreier graph) thus gives each orbit a geometric structure. It is with this geometric structure that a marker set, or sometimes a sequence of marker sets, becomes the starting point of a more sophisticated construction or analysis.

In summary, the typical setup of our study is to consider a continuous and free action of a finitely generated group $\Gamma$ on a zero-dimensional Polish space $X$. One of the simplest such space $X$ is $F(2^\Gamma)$, the free part of the Bernoulli shift action of $\Gamma$ with alphbet $\{0,1\}$.

Suppose $\Gamma$ is a countable group acting continuously and freely on a Polish space $X$. A {\em marker set} or a {\em complete section} is a subset $Y\subseteq X$ such that $Y$ meets each orbit in $X$. Since the space $X$ is often zero-dimensional, we consider marker sets that are clopen, open, etc. or Borel in general. To avoid triviality, we often require a  marker set to be {\em co-complete}, i.e., its complement is also a complete section. 

For the marker sets to be useful, we often impose other regularity conditions on them. The following is a successful example (definitions of notation and terminology can be found in Section~\ref{sec:2}).

\begin{lemma}[Basic clopen marker lemma {\cite[Lemma 2.1]{GJ15}}]\label{lem:basic2} For any positive integer $d\geq 1$, there is a clopen set $M\subseteq F(2^{\mathbb{Z}^n})$ such that
\begin{enumerate}
\item[(1)] for any distinct $x, y\in M$ in the same orbit, $\rho(x,y)\geq d$;
\item[(2)] for any $x\in F(2^{\mathbb{Z}^n})$ there is $y\in M$ such that $\rho(x,y)<d$.
\end{enumerate}
\end{lemma}

However, there is a fine line dividing what is possible and what is not possible. In \cite{GJKS24} marker sets with very strong regularity properties were constructed, but in \cite{GJKS22} and \cite{GJKS23} marker sets with various regularity properties were proven not to exist. For example, it was proven in \cite[Theorem 1.2]{GJKS22} that for any countably infinite group $\Gamma$ there is no Borel marker set $M\subseteq F(2^\Gamma)$ such that for any finite $F\subseteq \Gamma$ and any $x\in F(2^\Gamma)$, there is some $y\in [x]$ with $F\cdot y\cap M=\varnothing$. In \cite[Theorem 1.6.4]{GJKS23} it was proven that there is no clopen single line section in $F(2^{\mathbb{Z}^2})$ (definitions can be found in Section~\ref{sec:5}). We will recall more of such results when we discuss applications of our main theorems in Section~\ref{sec:5}.

Our first main theorem in this paper is to demonstrate the existence of marker sets in $F(2^{\mathbb{Z}^n})$ with the following strong regularity property.

\begin{theorem} Let $n, d\geq 1$ be positive integers. Then there is a positive integer $D\geq d$ and a clopen subset $M\subseteq F(2^{\mathbb{Z}^n})$ such that
\begin{enumerate}
\item[(1)] for any distinct $x, y\in M$ in the same orbit, $\rho(x,y)\geq d$;
\item[(2)] for any $1\leq i\leq n$ and any $x\in F(2^{\mathbb{Z}^n})$, there are non-negative integers $a, b\leq D$ such that $ae_i\cdot x\in M$ and $-be_i\cdot x\in M$.
\end{enumerate}
\end{theorem}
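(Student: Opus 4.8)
The plan is to assemble $M$ from $n$ pieces $M^{(1)},\dots,M^{(n)}$, one per coordinate direction, living inside a fixed clopen rectangular tiling of $F(2^{\mathbb{Z}^n})$. Throughout I use that $\rho$ dominates the $\ell^\infty$-distance on each orbit, so it suffices to separate points in $\ell^\infty$-distance; and I may assume $d\ge 2$, since for $d=1$ one can take $M=F(2^{\mathbb{Z}^n})$ and $D=1$.

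Fix a large parameter $N$ (to be pinned down at the end, of size roughly $n(2d)^n$), the dimensional constant $N_0=d(2d-1)^{n-1}$, and $D=10N$. Using Lemma~\ref{lem:basic2} together with the standard construction of clopen tilings of $F(2^{\mathbb{Z}^n})$ by rectangular boxes (cf.\ \cite{GJ15}), fix such a clopen tiling $\mathcal R$ whose boxes, in each orbit's $\mathbb{Z}^n$-coordinates, are $R=\prod_{i=1}^n[\alpha_i^R,\beta_i^R)$ with all side lengths in $[N,2N)$. Two elementary observations will be used repeatedly. First, since distinct boxes are disjoint products of intervals, for $R\ne R'$ there is a coordinate $k$ with $[\alpha_k^R,\beta_k^R)\cap[\alpha_k^{R'},\beta_k^{R'})=\varnothing$; consequently the \emph{cores} $R^\circ=\prod_i[\alpha_i^R+N_0,\beta_i^R-N_0)$ of distinct boxes are at $\ell^\infty$-distance $>2N_0\ge d$. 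Second, there are finitely many ``base-$(2d-1)$ digit'' functions $m_i\colon\mathbb{Z}^{n-1}\to\{0,\dots,N_0-1\}$ with $m_i\equiv 0\pmod d$ and with $m_i(v)\ne m_i(v')$ whenever $0<\|v-v'\|_\infty\le d-1$.

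Now the construction. For a box $R$, a direction $i$, and an $e_i$-line $\ell$ whose transverse coordinates $v_\ell$ lie in the transverse part of $R^\circ$, provisionally put one marker on $\ell$ at $e_i$-coordinate $\alpha_i^R+N_0+m_i(v_\ell-\alpha^R_{\perp i})$, a point of $R^\circ$; call this set $M^{(i)}_{\mathcal R}$. Within one box, two markers of $M^{(i)}_{\mathcal R}$ on transverse-nearby lines have $e_i$-coordinates differing by a nonzero multiple of $d$, hence are $\ge d$ apart; markers of $M^{(i)}_{\mathcal R}$ from distinct boxes are $\ge d$ apart by the core observation; so $M^{(i)}_{\mathcal R}$ is $\ge d$-separated. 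It covers, with gaps at most $D$, every $e_i$-line that is core-deep in the relevant boxes; the lines that are not are handled by running the same recipe over a second clopen rectangular tiling $\mathcal R'$ whose mortar is everywhere at $\ell^\infty$-distance $\ge 3N_0$ from that of $\mathcal R$ (produced by the same marker technology): every $e_i$-line is transversally deep in boxes of $\mathcal R$ or of $\mathcal R'$ since the two mortars are far apart, and $M^{(i)}_{\mathcal R}\cup M^{(i)}_{\mathcal R'}$ is still $\ge d$-separated because the cores of $\mathcal R$-boxes, the cores of $\mathcal R'$-boxes, and the two mortars are pairwise far. Finally one must keep the $n$ directions from colliding with one another; I would do this either by giving each direction its own pair of tilings and taking all these $2n$ tilings pairwise far (again by iterating the marker construction), or — perhaps cleaner to verify — by building the families one direction at a time, placing the markers of $M^{(i)}$ by a bounded-lookahead local rule on each box-segment of each $e_i$-line that avoids the $d$-neighbourhood of the already-built $M^{(1)}\cup\dots\cup M^{(i-1)}$; here the key estimate is that each $M^{(j)}$ has density $O(1/N)$, so on any $e_i$-line the forbidden region meets every segment of length $D$ in strictly less than half of it once $N$ is large, which simultaneously lets the rule succeed with gaps $\le D$ and keeps it clopen.

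Setting $M=M^{(1)}\cup\dots\cup M^{(n)}$: clopenness is immediate, since the tilings $\mathcal R,\mathcal R'$ and all placement rules are decided by a bounded window; property~(1) holds by the separation estimates above; and property~(2) holds because for every $x$ and every $i$ the $e_i$-line through $x$ meets $M^{(i)}\subseteq M$ with consecutive gaps $\le D$, so there are non-negative integers $a,b\le D$ with $ae_i\cdot x\in M$ and $-be_i\cdot x\in M$. It then only remains to choose $N$ (a bit larger than $n(2d)^n$) and to check that $D=10N\ge d$ and that every inequality invoked along the way holds. The main obstacle is precisely the reconciliation in the third paragraph: forcing the several marker families — and the several boxes feeding a single line — to coexist without ever creating a pair at distance $<d$ while preserving the uniform gap bound~(2) and clopenness; this is where the ``pairwise far tilings''/sparsity bookkeeping must be done carefully, and where essentially all of the work lies.
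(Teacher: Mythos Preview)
Your proposal has genuine gaps at exactly the points you flag as ``where essentially all of the work lies.'' First, a second clopen box-tiling $\mathcal{R}'$ with mortar everywhere at distance $\ge 3N_0$ from that of $\mathcal{R}$ does not exist when $n\ge 2$. In any orbit, if some $\mathcal{R}'$-box $R'$ is not contained in a single $\mathcal{R}$-box then $R'$ meets two $\mathcal{R}$-boxes; for either one, say $R_1$, we have $R'\not\subseteq R_1$ and $R_1\not\subseteq R'$, so the connected set $\partial R_1$ has points both in $R'$ and in its complement and therefore meets $\partial R'$. If instead every $\mathcal{R}'$-box lies inside some $\mathcal{R}$-box, then $\mathcal{R}'$ refines $\mathcal{R}$ and the two mortars share all of $\partial\mathcal{R}$. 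Either way the mortars meet, so the ``$2n$ pairwise-far tilings'' variant fails too. Second, even granting far-apart mortars, your assertion that $M^{(i)}_{\mathcal R}\cup M^{(i)}_{\mathcal R'}$ is $\ge d$-separated ``because the cores \dots\ are pairwise far'' is wrong on its face: cores of $\mathcal{R}$-boxes and cores of $\mathcal{R}'$-boxes generically overlap, and nothing prevents a digit-function marker from one tiling landing within $d$ of one from the other. Third, your greedy fallback for cross-direction separation abandons the digit-function placement; once that is gone, markers of $M^{(i)}$ on transversally nearby lines are chosen independently and can land $<d$ apart --- the density estimate controls collisions with \emph{earlier} $M^{(j)}$'s, not self-collisions within $M^{(i)}$.

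The paper's proof avoids all of this by a different mechanism. It uses a single tiling into boxes of side $\sim D_1$ and, within each box $R$ and for each direction $i$, builds a finite collection $\mathcal{P}_i^R$ of thin sub-rectangles (``packages'') whose $\sigma_i$-projections cover \emph{all} of $\sigma_i(R)$, not just a core; the digit-function construction (Lemma~\ref{lem:1direction}) is then run inside each package. Separation between packages of different directions and across box boundaries is arranged by processing the boxes in $H$ rounds (boxes in the same round are non-adjacent, via a coarser clopen marker set) and, at each step, choosing the new packages so that their $\pi_i$-projections avoid a bounded list of previously placed ones, using the one-dimensional ``packaging and spacing'' Lemmas~\ref{lem:dim1packagingandspacing}--\ref{lem:1directionmultiple}. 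Lines near the mortar are thus handled not by a second tiling but by letting the packages in $R$ see, and dodge, those already built in the neighbours of $R$.
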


We also show two applications of this theorem. One application is to construct a continuous proper edge $(2n+1)$-coloring of the Schreier graph on $F(2^{\mathbb{Z}^n})$ with the standard generating set. This was shown to exist in \cite{GWW25} by a different method. With our main theorem here, the construction of the edge coloring becomes easy. Another application is to construct a clopen tree section of $F(2^{\mathbb{Z}^n})$ for $n\geq 2$ that is both complete and co-complete. 

Our second main theorem in this paper is a strengthening of the first main theorem with a more general generating set.

\begin{theorem} Let $n, d\geq 1$ be positive integers and let $S\subseteq \mathbb{Z}^n$ be a finite generating set. Suppose for each $g\in S$, $\{1\leq i\leq n\colon \pi_i(g)\neq 0\}$ has size either $1$ or $n$. Then there is a positive integer $\Delta\geq d$ and a clopen subset $M\subseteq F(2^{\mathbb{Z}^n})$ such that
\begin{enumerate}
\item[(1)] for any distinct $x,y\in M$, $\rho(x,y)\geq d$;
\item[(2)] for any $g\in S$ and any $x\in F(2^{\mathbb{Z}^n})$, there are non-negative integers $a, b\leq \Delta$ such that $ag\cdot x\in M$ and $-bg\cdot x\in M$.
\end{enumerate}
\end{theorem}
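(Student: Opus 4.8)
The plan is to derive the statement by re-running the construction behind the first main theorem in a form adapted to the directions appearing in $S$, after grafting on two ingredients: a reduction of $S$ to a single modulus per direction, and a small number-theoretic device that converts the divisibility clause in~(2) into a statement about the gaps of $M$ along lines.

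\smallskip\noindent\emph{Reducing $S$.} For a generator $g=c\cdot u$ with $u\in\mathbb Z^n$ primitive and $c\ge 1$, condition~(2) for $g$ says precisely that the trace of $M$ on every line parallel to $u$ meets every coset of $c\mathbb Z$ (inside that line) syndetically, with gap $O(\Delta c)$. Since a coset of $c\mathbb Z$ is a union of cosets of $c'\mathbb Z$ whenever $c\mid c'$, and since $g$ and $-g$ impose the same requirement, we may merge generators: put $\kappa_i:=\mathrm{lcm}\{|k|:ke_i\in S\}$ for each $i$ (so $\kappa_i=1$ if there is no such $k$), and for each primitive full-support $u$ occurring up to sign among the other generators put $c_u:=\mathrm{lcm}\{c:cu\in S\text{ or }-cu\in S\}$. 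It suffices to handle the directions $e_1,\dots,e_n$ with moduli $\kappa_1,\dots,\kappa_n$ together with the finitely many full-support directions $u$ with moduli $c_u$; the $\Delta$ for the original $S$ exceeds the one obtained only by the bounded ratios $c'/c$ coming from the merges.

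\smallskip\noindent\emph{The coprimality device.} For each $i$ choose $p_i\ge d$ with $\gcd(p_i,\kappa_i)=1$, and for each relevant $u$ choose $q_u\ge d$ with $\gcd(q_u,c_u)=1$ (a prime exceeding $\max(d,\kappa_i)$, resp.\ $\max(d,c_u)$, will do). The elementary point is: if the successive gaps of a bi-infinite increasing sequence of integers all lie in the two-element set $\{p,p+\kappa\}$ with $\gcd(p,\kappa)=1$, then any $\kappa$ consecutive terms already realize all residues modulo $\kappa$ (each gap is $\equiv p\pmod{\kappa}$ and $p$ is a unit modulo $\kappa$), so from any point the sequence reaches every residue class modulo $\kappa$ within $\kappa(p+\kappa)$ on either side. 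Consequently, if we can build $M$ so that its trace on every $e_i$-line has all gaps in $\{p_i,p_i+\kappa_i\}$ and its trace on every $u$-line has all gaps (in steps of $u$) in $\{q_u,q_u+c_u\}$, then~(2) holds with a $\Delta$ that is an explicit function of the $p_i,\kappa_i,q_u,c_u$, and $p_i,q_u\ge d$ yields $\Delta\ge d$; condition~(1) will then follow from global $d$-separation of $M$.

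\smallskip\noindent\emph{The construction, and the main obstacle.} What remains is to produce a clopen, $d$-separated $M\subseteq F(2^{\mathbb Z^n})$ with exactly this one-dimensional gap behaviour along the finitely many relevant directions. This is the construction of the first main theorem with extra demands placed on it, and I would prove it by the same method: extract $M$ from a clopen tiling of $F(2^{\mathbb Z^n})$ by boxes of controlled size, together with a bounded marker pattern inside each box. The new requirements are met by (i) forcing the $i$-th side lengths of all boxes into $\{p_i,p_i+\kappa_i\}$ — legitimate precisely because these two numbers are coprime, so that all sufficiently long intervals remain tileable by them — and choosing the local patterns so that the passage of an $e_i$-line through the boxes produces $e_i$-traces with gaps in $\{p_i,p_i+\kappa_i\}$; and (ii) for each full-support $u$, using that a $u$-line threads through a fresh box every bounded number of steps, and again tuning the local patterns so that $u$-traces have gaps in $\{q_u,q_u+c_u\}$. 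The hypothesis that every generator has support $1$ or $n$ enters at step (ii): a direction of intermediate support keeps a line inside a fixed lower-dimensional coordinate subspace, along which the tiling cannot be arranged to have the required gap structure uniformly. The main obstacle is carrying out (i) and (ii) at once: the local patterns must simultaneously be $d$-separated, have the prescribed one-dimensional traces along all of the finitely many relevant directions, and glue coherently across shared box facets, while all the length parameters are coordinated so that a \emph{single} bound $\Delta$ works in every orbit. This reduces to a finite combinatorial design inside the boxes (finitely many box shapes, finitely many directions), solvable once the $p_i$ and $q_u$ are taken large enough; the uniformity of $\Delta$ over orbits is exactly what the controlled-geometry clopen tiling supplies, just as in the first main theorem.
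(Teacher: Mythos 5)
There is a genuine gap, and it sits exactly where you place ``the main obstacle.'' Your argument reduces the theorem to the existence of a clopen, $d$-separated set $M$ whose trace on \emph{every} line in each relevant direction has all of its gaps confined to a prescribed two-element set ($\{p_i,p_i+\kappa_i\}$ for the axis directions, $\{q_u,q_u+c_u\}$ in steps of $u$ for the full-support directions), and you then assert that this follows from the method of the first main theorem by a ``finite combinatorial design inside the boxes.'' It does not. The marker-region method underlying Theorem~\ref{thm:main} produces regions of side length $D_1$ or $D_1+1$ with patterns placed independently (up to a bounded interaction) in each region; what it controls is only that every line meets $M$ within distance $D$ and that $M$ is $d$-separated. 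When a line crosses from one region to the next, the gap it sees is essentially arbitrary in $[d,D]$, because the phases of the patterns in adjacent regions are not, and cannot in general be, arithmetically aligned: your requirement that every such crossing gap again lie in $\{p,p+\kappa\}$ is a global phase-coherence condition between neighboring regions (simultaneously for all $n$ axis directions and all full-support directions $u$), not a local design problem solvable by taking $p_i,q_u$ large. This is precisely the kind of exact regularity that clopen constructions on $F(2^{\mathbb{Z}^n})$ are known to resist (compare the nonexistence results for overly regular clopen sections cited in Section~\ref{sec:5}); at the very least it is a major unproven claim, and nothing in your proposal, nor in the first main theorem, supplies it. The coprimality observation itself is fine, but it only converts condition (2) into this much stronger structural demand on $M$, so the whole burden of the proof rests on the unestablished step.

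The paper's proof avoids this issue by never controlling gap lengths at all and instead covering residues \emph{within a single marker region}, so that no coherence across regions is needed. For a generator $\alpha e_i$, Lemma~\ref{lem:1directiongen} places $|\alpha|$ translated copies of the basic marker set inside one package, shifted by amounts congruent to $0,1,\dots,|\alpha|-1$ modulo $|\alpha|$, so every residue class along every $e_i$-line is already hit inside that package; Lemma~\ref{lem:1directiongenmultiple} handles several moduli by juxtaposing such packages (your lcm merge is a harmless alternative here). For a full-support generator $v$, Lemma~\ref{lem:1directionslanted2} builds parallelopiped-shaped packages hugging a corner of the $\delta$-core of each region so that every $v$-line emanating from the core meets $M$ inside that same region, and Lemma~\ref{lem:hit} guarantees that along any $v$-line one enters a core within $\frac{1}{2}\Delta$ steps. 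If you want to salvage your approach, you would either have to prove the clopen two-gap structure exists (which I expect to be false in general for $n\geq 2$), or replace it by a residue-covering device that, like the paper's, is local to a single region.
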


An easy consequence is the existence of a continuous proper edge $(2|S|+1)$-coloring of the Schreier graph on $F(2^{\mathbb{Z}^n})$ with the generating set $S$. In the case of dimension $2$, this covers the case of arbitrary generating sets. Both these corollaries were known from \cite{GWW25}, but the proofs here are very different.

The rest of the paper is organized as follows. In Section \ref{sec:2} we give the basic definitions and introduce the notation to be used in the rest of the paper. In Section~\ref{sec:3} we prove a version of the main theorem in $\mathbb{Z}^n$. The proof, including the lemmas used in the proof, will be used in the proof of the first main theorem in Section~\ref{sec:4}. In Section~\ref{sec:4} we prove the first main theorem. In Section~\ref{sec:5} we present the two applications of the first main theorem. In Section~\ref{sec:6} we prove the second main theorem and its corollaries.

\section{Preliminaries\label{sec:2}}

\subsection{Basic concepts in $\mathbb{Z}^n$}\ 

Let $n\geq 1$ be an integer. For $x=(x_1,\dots, x_n)\in\mathbb{Z}^n$ and $1\leq i\leq n$, let $x(i)=x_i$. For $1\leq i\leq n$, we also let $\pi_i\colon \mathbb{Z}^n\to \mathbb{Z}$ be the projection map $\pi_i(x)=x(i)$ and let $\sigma_i\colon \mathbb{Z}^n\to\mathbb{Z}^{n-1}$ be the projection map 
$$ \sigma_i(x)=(x(1),\dots, x(i-1), x(i+1), \dots, x(n)). $$ 
For $1\leq i\leq n$, let $e_i\in\mathbb{Z}^n$ be defined by
$$ e_i(j)=\left\{\begin{array}{ll} 1, & \mbox{ if $i=j$,} \\ 0, & \mbox{ otherwise.}\end{array}\right. $$
We view $e_i$ as a vector and refer to the direction $e_i$ in the usual sense.

For $x, y\in \mathbb{Z}^n$, we define
$$ \rho(x,y)=\sup\left\{ |x(i)-y(i)|\colon 1\leq i\leq n\right\} $$
and refer to $\rho(x,y)$ as the {\em distance} between $x$ and $y$. Let $\overline{0}=(0,\dots, 0)\in\mathbb{Z}^n$. For $x\in\mathbb{Z}^n$, let
$$ \|x\|=\rho(x,\overline{0}). $$
 For $A, B\subseteq \mathbb{Z}^n$, we define
$$ \rho(A, B)=\inf\{\rho(x,y)\colon x\in A, y\in B\} $$
and refer to $\rho(A, B)$ as the {\em distance} between $A$ and $B$. One can similarly define $\rho(x, A)$ for $x\in \mathbb{Z}^n$ and $A\subseteq \mathbb{Z}^n$. Note that $\rho$ denotes the distance function for all dimensions.

For integers $a<b$, we use $[a,b]$ to denote the set $\{t\in\mathbb{Z}\colon a\leq t\leq b\}$ and still call it an {\em interval}. An {\em $n$-dimensional rectangle} $R$ is a subset of $\mathbb{Z}^n$ of the form
$$ R=[a_1,b_1]\times \cdots\times[a_n, b_n], $$
where $a_1<b_1$, $\dots$, $a_n<b_n$ are integers. For an $n$-dimensional rectangle $R$ as above, its {\em side length} in the direction $e_i$ is defined as $b_i-a_i$. 

Note that an $n$-dimensional rectangle $R$ has $2^n$ many {\em extreme points} (or more intuitively, {\em corners}). The following is a fixed enumeration of them. For $1\leq k\leq 2^n$, let
$$ x_k(i)=\left\{\begin{array}{ll} a_i, & \mbox{ if the $i$-th least-significant digit of  } \\
& \mbox{ the binary expansion of $k-1$ is 0,} \\
b_i, & \mbox{ otherwise,}\end{array}\right. $$
for all $1\leq i\leq n$. Then $\{x_k\colon 1\leq k\leq 2^n\}$ is the set of all extreme points of $R$. We call $x_1,\dots, x_{2^n}$ the {\em canonical} enumeration of the extreme points of $R$.

We will also work with {\em generalized $n$-dimensional rectangles}, which are subsets of $\mathbb{Z}^n$ of the form
$$ G=[a_1,b_1]\times \cdots \times [a_n,b_n] $$
where $a_1\leq b_1$, $\dots$, $a_n\leq b_n$ are integers. In other words, side lengths of a generalized $n$-dimensional rectangle could be $0$, in which case the generalized $n$-dimensional rectangle is indeed a degenerate rectangle in $\mathbb{Z}^n$ whose actual dimension could be lower. A {\em generalized interval} is an interval or a singleton.

\subsection{Basic concepts in $F(2^{\mathbb{Z}^n})$}\

The main theorems of the paper will be about the ({\em Bernoulli}) {\em  shift action} of $\mathbb{Z}^n$ on $2^{\mathbb{Z}^n}$ defined as follows. For $g, h\in \mathbb{Z}^n$ and $x\in 2^{\mathbb{Z}^n}$, define
$$ (g\cdot x)(h)=x(g+h). $$
The {\em free part} of the action is 
$$ F(2^{\mathbb{Z}^n})=\left\{x\in 2^{\mathbb{Z}^n}\colon \forall g\in\mathbb{Z}^n\ (g\neq \overline{0}\rightarrow g\cdot x\neq x)\right\}. $$
With the product topology, $2^{\mathbb{Z}^n}$ is a Polish space. $F(2^{\mathbb{Z}^n})$ is a $G_\delta$ subspace of $2^{\mathbb{Z}^n}$, hence it is also a Polish space. The shift action of $\mathbb{Z}^n$ on $F(2^{\mathbb{Z}^n})$ is continuous. 

For any $x\in F(2^{\mathbb{Z}^n})$, the {\em orbit} of $x$ is $[x]=\mathbb{Z}^n\cdot x=\{g\cdot x\colon g\in \mathbb{Z}^n\}$. Because of freeness, every orbit in $F(2^{\mathbb{Z}^n})$ can be viewed as a copy of $\mathbb{Z}^n$, and therefore all the concepts in $\mathbb{Z}^n$ apply to orbits of $F(2^{\mathbb{Z}^n})$. For example, we can speak of $\rho(x,y)$ for $y\in[x]$ as the {\em distance} between $x$ and $y$ in the same orbit. Formally, $\rho(x,y)=\|g\|$, where $g\in\mathbb{Z}^n$ is the unique element of the group such that $g\cdot x=y$. One can also talk about {\em intervals}, {\em generalized intervals}, {\em $n$-dimensional rectangles}, {\em generalized $n$-dimensional rectangles}, etc. in an orbit. Formally, an {\em interval} in the direction $e_i$ in $F(2^{\mathbb{Z}^n})$ is a set of the form $I\cdot x$ for some $x\in F(2^{\mathbb{Z}^n})$ and some interval $I$ in the direction $e_i$ in $\mathbb{Z}^n$. The other concepts are similarly defined.

We will also consider the {\em Schreier graph} $G$ on $F(2^{\mathbb{Z}^n})$, which is the graph defined as
$$ \{x,y\}\in E(G)\iff \exists 1\leq i\leq n\ \exists \lambda\in\{-1,1\}\ (\lambda e_i\cdot x=y). $$
We say an edge $\{x,y\}\in E(G)$ is {\em parallel} to $e_i$ if either $e_i\cdot x=y$ or $-e_i\cdot x=y$. 
Here $E(G)$ can be identified as the image of a subspace of $F(2^{\mathbb{Z}^n})^2$ under a $2$-to-$1$ map $\theta$, and thus can be given the smallest topology to make $\theta$ continuous. 
For positive integer $k\geq 2$, a {\em proper edge $k$-coloring} of $G$ is a map $c\colon E(G)\to \{1,\dots, k\}$ such that for any distinct $x, y, z\in F(2^{\mathbb{Z}^n})$, if $\{x,y\}, \{y, z\}\in E(G)$, then $c(\{x,y\})\neq c(\{y, z\})$. With the abovementioned topology on $E(G)$, we can refer to {\em continuous} proper edge $k$-colorings. The {\em continuous edge chromatic number} of $F(2^{\mathbb{Z}^n})$ is the smallest integer $k$ such that there exists a continuous proper edge $k$-coloring of $F(2^{\mathbb{Z}^n})$. It was shown in \cite{GWW25} that the continuous edge chromatic number of $F(2^{\mathbb{Z}^n})$ is exactly $2n+1$; in particular, there is a continuous proper edge $(2n+1)$-coloring of $F(2^{\mathbb{Z}^n})$.

The above definitions and notation do not exhaust all that we use in the rest of the paper; more definitions will be made and notation defined as they become necessary later.

\section{Strong marker sets in $\mathbb{Z}^n$\label{sec:3}}

The following lemma is the most basic step of our construction of strong markers.

\begin{lemma}\label{lem:1direction}
Let $n, d\geq 1$ be positive integers and let $1\leq i\leq n$. For any generalized $n$-dimensional rectangle $R$ with side length at least $2d^n-d$ in the direction $e_i$, there is a subset $M\subseteq R$ such that
\begin{enumerate}
\item[(1)] for any distinct $x,y\in M$, $\rho(x,y)\geq d$;
\item[(2)] for any $x\in R$, there is $a\in\mathbb{Z}$ such that $x+ae_i\in M$.
\end{enumerate}
\end{lemma}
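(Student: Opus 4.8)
The plan is to reduce to the case $i=n$ by permuting coordinates, and then to build $M$ column by column over the base rectangle $R'=[a_1,b_1]\times\cdots\times[a_{n-1},b_{n-1}]$, writing $R=R'\times[a_n,b_n]$ with $b_n-a_n\geq 2d^n-d$. Since $M\subseteq R$ and moving in the direction $e_n$ acts transitively on each ``column'' $\{u\}\times[a_n,b_n]$ (for $u\in R'$), condition (2) is equivalent to the statement that every column contains at least one point of $M$. For condition (1): two points of $M$ in the same column have the same $\sigma_n$-coordinate, so $\rho$-distance $\geq d$ forces their $e_n$-coordinates to differ by at least $d$; two points in columns $u\neq u'$ with $\rho(u,u')<d$ likewise force the $e_n$-coordinates to differ by at least $d$; and if $\rho(u,u')\geq d$ there is nothing to check. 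So the task is to assign to each $u\in R'$ a nonempty set $M_u\subseteq[a_n,b_n]$ that is $d$-separated, and such that $M_u$ and $M_{u'}$ are ``$d$-apart'' whenever $0<\rho(u,u')<d$.

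The key device is a function $h\colon\mathbb{Z}^{n-1}\to\{0,1,\dots,d^{n-1}-1\}$ which is injective on every box of side $d$; take the mixed-radix encoding of the residues mod $d$, namely $h(u_1,\dots,u_{n-1})=\sum_{j=1}^{n-1}(u_j\bmod d)\,d^{j-1}$. If $u\neq u'$ with $\rho(u,u')<d$, then $h(u)=h(u')$ would force $u_j\equiv u'_j\pmod d$ for all $j$ by uniqueness of the mixed-radix representation, and hence $u_j=u'_j$ for all $j$ since $|u_j-u'_j|<d$, a contradiction; so $h(u)\neq h(u')$ in that case.

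Then for $u\in R'$ set $M_u=\{\,a_n+d\,h(u)+k\,d^n\colon k\in\mathbb{Z}\,\}\cap[a_n,b_n]$ and $M=\bigcup_{u\in R'}\{u\}\times M_u$. Since $0\leq d\,h(u)\leq d^n-d\leq 2d^n-d\leq b_n-a_n$, the point $a_n+d\,h(u)$ lies in $[a_n,b_n]$, so $M_u\neq\varnothing$, giving (2). For (1), take distinct $(u,p),(u',p')\in M$. If $u=u'$: both $p,p'$ are $\equiv a_n+d\,h(u)\pmod{d^n}$ and $p\neq p'$, so $|p-p'|\geq d^n\geq d$. If $\rho(u,u')\geq d$: done. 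If $0<\rho(u,u')<d$: put $\delta=h(u)-h(u')\in\{-(d^{n-1}-1),\dots,d^{n-1}-1\}\setminus\{0\}$; then $p-p'\in d\delta+d^n\mathbb{Z}$ with $d\leq|d\delta|\leq d^n-d$, so $|p-p'|\geq\min(|d\delta|,\,d^n-|d\delta|)\geq d$. In every case $\rho((u,p),(u',p'))\geq d$.

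The main obstacle is arranging the cross-column separation: one must find the locally injective base function $h$ and pair it with the correct period ($d^n$) so that markers in nearby columns can never be close; once those choices are made, the verification is a short modular computation. Note that the argument only uses side length $\geq d^n-d$ in the direction $e_i$, so the stated hypothesis $2d^n-d$ leaves slack (in fact enough to guarantee at least two markers in every column), which is presumably convenient for the later applications.
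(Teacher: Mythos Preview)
Your proof is correct, and the underlying mechanism is the same as the paper's: in each ``column'' over a base point $u$, place the marker at a height determined by the residues of the coordinates of $u$ modulo $d$ via the mixed-radix map $h(u)=\sum_j (u_j\bmod d)\,d^{j-1}$. The packaging differs, though. The paper argues by induction on $n$: given $M_{n-1}$ for the first $n-1$ coordinates, it forms $M$ by shifting $M_{n-1}$ by $2(t\bmod d)\,d^{n-1}$ in the $e_i$-direction as the last base coordinate $t$ varies; unwinding this recursion yields a single marker per column at height $2d\cdot h(u)$. You instead write down the explicit formula at once, use offset $d\cdot h(u)$ (not $2d\cdot h(u)$), and populate each column with a full arithmetic progression of period $d^n$. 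The factor of $2$ in the paper's construction is there so that consecutive shifted copies of $M_{n-1}$ are disjoint intervals with gap $\geq d$, which makes the verification of (1) immediate; your tighter spacing requires the short modular computation you give, and in return yields the sharper bound you note (side length $d^n-d$ suffices).
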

\begin{proof} By induction on $n$. For $n=1$ the lemma trivially holds if we take $M\subseteq R$ to be a singleton. Now assume $n>1$. Without loss of generality, we may assume that $i=1$ and 
$$R=[0,2d^n-d-1]\times[0,b_2]\times\cdots\times[0,b_n]$$ where $b_i\geq 0$ for all $2\leq i\leq n$. 
Consider 
$$ R_{n-1}=[0, 2d^{n-1}-d-1]\times [0, b_2]\times \cdots \times [0, b_{n-1}]. $$
By the inductive hypothesis, there is a subset $M_{n-1}\subseteq R_{n-1}$ such that (1) and (2) hold for $M_{n-1}$. Now define
$$ M=\bigcup_{t\in [0, b_n]} (M_{n-1}+2(t\!\!\!\!\mod d)d^{n-1}e_1)\times\{t\}. $$
Since $0\leq (t\!\!\mod d)\leq d-1$, we have that for any $x\in M$, 
$$0\leq x(1)\leq 2d^{n-1}-d-1+2(d-1)d^{n-1}=2d^n-d-1;$$ hence $M\subseteq R$.
Now it is easy to see that (2) holds for $M$. For (1), just note that for any distinct $x, y\in M$, if $x(n)=y(n)$ then $\rho(x,y)\geq d$ by (1) for $M_{n-1}$; if $x(n)\neq y(n)$, then $|x(1)-y(1)|\geq d$, hence $\rho(x,y)\geq d$.
\end{proof}

We will informally call the set $M$ a {\em marker set} and its elements {\em marker points}.  Further constructions of marker sets will involve identifying a certain collection of generalized $n$-dimensional subrectangles of a given $n$-dimensional rectangle (we call the technique ``packaging") and applying Lemma~\ref{lem:1direction} to each subrectangle in the collection. To guarantee (1) we need to appropriately ``space out" the generalized $n$-dimensional subrectangles in the collection (we call this technique ``spacing"). The following two lemmas illustrate the ``packaging and spacing" technique. 

\begin{lemma} \label{lem:dim1packagingandspacing}Let $m\geq 0$ be an integer and let $d, k\geq 1$ be positive integers. Let $I$ be an interval of $\mathbb{Z}$. Let $\mathcal{J}$ be a collection of $m$ many subintervals of $I$ each of whose length is at most $d$. Suppose the length of $I$ is at least $3d(2m+k+1)$. Then there is a collection $\mathcal{K}$ of $k$ many pairwise disjoint subintervals of $I$ such that 
\begin{enumerate}
\item[(i)] for each $K\in \mathcal{K}$, the length of $K$ is at least $d$;
\item[(ii)] for any distinct $K_1, K_2\in\mathcal{K}$, $\rho(K_1, K_2)\geq d$;
\item[(iii)] for any $J\in \mathcal{J}$ and $K\in\mathcal{K}$, $\rho(J, K)\geq d$.
\end{enumerate}
\end{lemma}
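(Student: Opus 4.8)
The plan is to produce the $k$ intervals in $\mathcal{K}$ greedily, carving out ``forbidden zones'' around each of the $m$ given intervals in $\mathcal{J}$ and around the intervals already chosen, and then showing a counting argument guarantees enough room remains. First I would write $I=[\alpha,\beta]$ with $\beta-\alpha\geq 3d(2m+k+1)$. For each interval $J\in\mathcal{J}$, let its \emph{buffered version} be $J^+=\{t\in\mathbb{Z}\colon \rho(t,J)\leq d\}$; since the length of $J$ is at most $d$, the length of $J^+$ is at most $d+2d=3d$. The union $\bigcup_{J\in\mathcal{J}}J^+$ meets $I$ in a set that is a union of at most $m$ intervals of total length at most $3dm$. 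Removing this set from $I$ leaves a disjoint union of at most $m+1$ subintervals of $I$ whose total length is at least $3d(2m+k+1)-3dm=3d(m+k+1)$.

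Next I would process these ``gap intervals'' one at a time, extracting chosen intervals from them while maintaining conditions (i)--(iii). Within a single gap interval $G$ of length $\ell$, I claim one can extract at least $\lfloor \ell/(3d)\rfloor - 1$ pairwise-$d$-separated subintervals each of length exactly $d$: tile $G$ from its left endpoint by alternating blocks of length $d$ (a ``chosen'' block) and length $d$ (a ``buffer'' block), i.e.\ use period $2d$, which already gives $\rho(K_1,K_2)\geq d$ between consecutive chosen blocks; to also keep distance $\geq d$ from the buffered $J$'s (which we have already removed, so the first chosen block inside $G$ sits at distance $\geq d$ from any $J$ automatically since $G\subseteq I\setminus\bigcup J^+$) and from the endpoints of $G$, it is cleanest to use period $3d$ (chosen block of length $d$, then a buffer of length $2d$), which is why the $3d$ factor appears in the hypothesis. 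Summing $\lfloor \ell_j/(3d)\rfloor$ over the gap intervals and using $\sum_j \ell_j\geq 3d(m+k+1)$ together with the fact that there are at most $m+1$ gaps (so the floors lose at most $m+1$ in total) yields at least $(m+k+1)-(m+1)=k$ chosen intervals. I would then take $\mathcal{K}$ to be any $k$ of them.

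Finally I would verify (i)--(iii): (i) holds since every chosen interval has length exactly $d$; (ii) holds because two chosen intervals in the same gap are separated by a buffer of length $\geq d$ by construction, and two chosen intervals in different gaps are separated by at least one full buffered $J^+$-component or, if no $J$ lies between them, by the $2d$-buffer we reserved at the right end of the earlier gap (one can also simply start each gap's tiling $d$ units in from its left endpoint to force separation from the previous gap's material); (iii) holds because each chosen interval lies inside $I\setminus\bigcup_{J}J^+$, and $t\notin J^+$ means $\rho(t,J)\geq d$, so any point of a chosen interval is at distance $\geq d$ from every $J\in\mathcal{J}$, hence $\rho(J,K)\geq d$.

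The main obstacle I anticipate is purely bookkeeping: making the period/buffer choices and the floor estimates line up so that the constant $3d(2m+k+1)$ in the hypothesis is exactly what is consumed. In particular one must be careful that the $m$ removed buffered intervals $J^+$ can fragment $I$ into as many as $m+1$ gaps, each of which costs a ``$-1$'' in the floor bound, and that consecutive gaps still need mutual separation; choosing a generous period $3d$ (rather than the tight $2d$) and optionally reserving a $d$-margin at each gap endpoint absorbs all of these losses, at the cost of the factor $3$ and the ``$+1$'' in $2m+k+1$. No step requires anything beyond elementary interval arithmetic, so once the constants are pinned down the proof is routine.
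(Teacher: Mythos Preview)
Your greedy buffering approach is correct in outline, but it is a genuinely different route from the paper's, which is considerably slicker. The paper simply partitions $I$ into consecutive blocks of length $3d$ (plus possibly one short block at the end), obtaining at least $2m+k+1$ blocks; since each $J\in\mathcal{J}$ has length at most $d$ it can meet at most two consecutive blocks, so at most $2m$ blocks are contaminated, leaving at least $k$ full-length clean blocks. Taking the middle third of each of these $k$ blocks gives $\mathcal{K}$, and (i)--(iii) follow by inspection: each middle third has length $d$; any two are separated by at least two outer thirds, hence by at least $2d$; and every $J$ sits inside a contaminated block, so is at least one outer third (distance $\ge d$) away from every chosen middle third. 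This explains the constant $3d(2m+k+1)$ directly---the $2m$ is the ``each $J$ kills at most two blocks'' count, not a buffering estimate---and there is no floor arithmetic or gap fragmentation to track.

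Your method would also go through, but the proposal wavers between extracting $\lfloor\ell/(3d)\rfloor$ and $\lfloor\ell/(3d)\rfloor-1$ intervals from a gap of length $\ell$, and the estimate $\sum_j\ell_j\ge 3d(m+k+1)$ is slightly optimistic in $\mathbb{Z}$: each interior removal of a $J^+$ splits a gap and costs a further unit of total length beyond the $\le 3d$ you budgeted. These are precisely the bookkeeping hazards you anticipated, and the paper's fixed-grid-plus-middle-third trick sidesteps them entirely.
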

\begin{proof} Divide $I$ up into consecutive disjoint intervals of length $3d$, with at most one interval at the end whose length is at most $3d$. So there are at least $2m+k+1$ many such intervals. Now $\bigcup \mathcal{J}$ can intersect at most $2m$ many such intervals. Thus there are at least $k$ many such intervals with no intersections with $\bigcup\mathcal{J}$. Let $\mathcal{K}$ be the middle $\frac{1}{3}$ of these $k$ many intervals of length $3d$. Then it is easy to verify that (i)--(iii) hold for $\mathcal{K}$.
\end{proof}

\begin{lemma} \label{lem:1directionmultiple}
Let $m\geq 0$ be an integer, let $n, d, k\geq 1$ be positive integers and let $1\leq i\leq n$. Let $R$ be an $n$-dimensional rectangle. Let $\mathcal{P}$ be a set of  $k$ many pairwise disjoint generalized $(n-1)$-dimensional rectangles such that  for every $P\in \mathcal{P}$, $P\subseteq \sigma_i(R)$. Let $\mathcal{J}$ be a collection of $m$ many subintervals of $\pi_i(R)$ each of whose length is at most $d$. Suppose $R$ has side length at least $3d(2m+k+1)$ in the direction $e_i$. Then there is an assignment $P\mapsto K_P$ from $\mathcal{P}$ to subintervals of $\pi_i(R)$ such that
\begin{enumerate}
\item[(1)] for any $P\in\mathcal{P}$, the length of $K_P$ is at least $d$;
\item[(2)] for any distinct $P, Q\in\mathcal{P}$, $\rho(K_P, K_Q)\geq d$;
\item[(3)] for any $J\in \mathcal{J}$ and $P\in\mathcal{P}$, $\rho(J, K_P)\geq d$.
\end{enumerate}
In particular, there is a collection $\mathcal{Q}$ of $k$ many generalized $n$-dimensional subrectangles of $R$ such that
\begin{enumerate}
\item[(4)] $\{\sigma_i(Q)\colon Q\in\mathcal{Q}\}=\mathcal{P}$;
\item[(5)] for any $Q\in\mathcal{Q}$, $\pi_i(Q)$ has length at least $d$;
\item[(6)] for any distinct $Q, S\in\mathcal{Q}$, $\rho(Q,S)\geq d$;
\item[(7)] for any $Q\in\mathcal{Q}$ and subrectangle $S$ of $R$ with $\pi_i(S)\subseteq \bigcup\mathcal{J}$, $\rho(Q, S)\geq d$.
\end{enumerate}
\end{lemma}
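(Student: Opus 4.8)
The plan is to reduce everything to the one-dimensional statement Lemma~\ref{lem:dim1packagingandspacing}, applied along the $e_i$-axis of $R$. First I would set up coordinates: write $I = \pi_i(R)$, so $I$ is an interval of $\mathbb{Z}$ with length at least $3d(2m+k+1)$, and the $k$ many base pieces in $\mathcal{P}$ together with the $m$ many intervals in $\mathcal{J}$ are exactly the data that Lemma~\ref{lem:dim1packagingandspacing} consumes. Apply that lemma with this $I$, this $\mathcal{J}$, and $k$ to obtain a family $\mathcal{K}$ of $k$ pairwise disjoint subintervals of $I$, each of length at least $d$, pairwise $d$-separated, and all $d$-separated from every $J\in\mathcal{J}$. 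Now fix any bijection $P\mapsto K_P$ between $\mathcal{P}$ and $\mathcal{K}$ (the assignment is not required to be canonical). Properties (1), (2), (3) are then literally the conclusions (i), (ii), (iii) of Lemma~\ref{lem:dim1packagingandspacing}, transported through this bijection, so this part is immediate.

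For the ``in particular'' clause, define for each $P\in\mathcal{P}$ the generalized $n$-dimensional subrectangle $Q_P$ of $R$ obtained by ``thickening'' $P$ in the $e_i$-direction by the interval $K_P$: concretely, if $P = [a_1,b_1]\times\cdots\times[a_{i-1},b_{i-1}]\times[a_{i+1},b_{i+1}]\times\cdots\times[a_n,b_n]\subseteq \sigma_i(R)$ and $K_P = [c,e]\subseteq \pi_i(R)$, let $Q_P$ be the rectangle in $\mathbb{Z}^n$ whose projection $\sigma_i$ is $P$ and whose $i$-th coordinate ranges over $[c,e]$. Since $P\subseteq\sigma_i(R)$ and $K_P\subseteq\pi_i(R)$, we get $Q_P\subseteq R$. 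Set $\mathcal{Q} = \{Q_P\colon P\in\mathcal{P}\}$. Then (4) holds by construction since $\sigma_i(Q_P) = P$ and the map is a bijection; (5) holds since $\pi_i(Q_P) = K_P$ has length at least $d$ by (1).

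For (6) and (7) I would use the same two-case argument that appears in the proof of Lemma~\ref{lem:1direction}: for distinct $Q_P, Q_S\in\mathcal{Q}$, the points of $Q_P$ and $Q_S$ either agree in the $e_i$-coordinate-range or not. Since $P$ and $S$ are distinct elements of $\mathcal{P}$, they are disjoint as subsets of $\mathbb{Z}^{n-1}$, so there is some coordinate $j\neq i$ in which their projections are disjoint intervals; but $\mathcal{P}$ is only assumed pairwise disjoint, not pairwise $d$-separated, so this alone does not give $\rho(Q_P,Q_S)\ge d$ — instead I rely on the $e_i$-direction: the $i$-th coordinates of $Q_P$ and $Q_S$ lie in $K_P$ and $K_S$ respectively, and by (2) we have $\rho(K_P,K_S)\ge d$, hence the $\sup$-distance between any point of $Q_P$ and any point of $Q_S$ is at least $d$ because the $i$-th coordinates already differ by at least $d$. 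Thus (6) holds regardless of the arrangement in the other coordinates. For (7), if $S$ is a subrectangle of $R$ with $\pi_i(S)\subseteq\bigcup\mathcal{J}$, then for any $Q_P\in\mathcal{Q}$ the $i$-th coordinate of any point of $Q_P$ lies in $K_P$ while the $i$-th coordinate of any point of $S$ lies in some $J\in\mathcal{J}$, and $\rho(J,K_P)\ge d$ by (3); again the $e_i$-coordinate alone forces $\rho(Q_P,S)\ge d$. The only mild subtlety to watch — and the step I expect to require the most care — is the bookkeeping that $\rho(K_P,K_S)\ge d$ as intervals of $\mathbb{Z}$ really does upgrade to $\rho(Q_P,Q_S)\ge d$ as subsets of $\mathbb{Z}^n$ under the $\sup$-metric, which is exactly the "if $x(n)\ne y(n)$ then $\rho(x,y)\ge d$" move from Lemma~\ref{lem:1direction}; everything else is direct unpacking of definitions.
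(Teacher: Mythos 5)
Your proof is correct and is essentially the paper's own argument: apply Lemma~\ref{lem:dim1packagingandspacing} to $I=\pi_i(R)$ with the given $\mathcal{J}$ and $k$, take an arbitrary bijection $P\mapsto K_P$, and form $\mathcal{Q}=\{P\times K_P\colon P\in\mathcal{P}\}$. The only difference is that the paper states (4)--(7) without comment, while you spell out the (correct) observation that the $e_i$-coordinate separation alone forces $\rho\geq d$ in the sup metric.
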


\begin{proof} Without loss of generality assume $i=n$. By Lemma~\ref{lem:dim1packagingandspacing}, we obtain $k$ many pairwise disjoint subintervals of $I$ with properties (i)--(iii). Let $P\mapsto K_P$ be an arbitrary bijection from $\mathcal{P}$ to $\mathcal{K}$. Then (1)--(3) hold for $\mathcal{K}$. Let $\mathcal{Q}=\{P\times K_P\colon P\in\mathcal{P}\}$. Then (4)--(7) hold.
\end{proof}

In the above lemma elements of $\mathcal{Q}$ are ``packages." Applying Lemma~\ref{lem:1direction} to each of the packages, we obtain a marker set with the desired properties. This construction will be repeatedly used in the rest of the paper. In fact, the circumstance under which Lemma~\ref{lem:1directionmultiple} will be applied is when $\sigma_i(R)\setminus \bigcup\mathcal{P}$ is the union of $L$ many generalized $(n-1)$-dimensional rectangles (which are not necessarily pairwise disjoint). The next lemma gives an estimate for $|\mathcal{P}|$ given $L$.

\begin{lemma}\label{lem:division} Let $n, L\geq 1$ be positive integers. Suppose $R$ is an $n$-dimensional rectangle and $S\subseteq R$ is the union of $L$ many $n$-dimensional rectangles. Then $R\setminus S$ can be written as the union of at most $(2L+1)^n$ many pairwise disjoint generalized $n$-dimensional rectangles.
\end{lemma}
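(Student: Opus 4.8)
The plan is to prove Lemma~\ref{lem:division} by induction on $n$, using a ``slicing'' argument that reduces the $n$-dimensional case to the $(n-1)$-dimensional case together with a one-dimensional bookkeeping step. First I would handle $n=1$: here $R$ is an interval and $S$ is a union of $L$ subintervals, so $R\setminus S$ is a union of at most $L+1\leq 2L+1=(2L+1)^1$ many disjoint generalized intervals, which is the base case.

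For the inductive step, suppose the claim holds in dimension $n-1$ and let $R=[a_1,b_1]\times\cdots\times[a_n,b_n]$ with $S=\bigcup_{j=1}^L R_j$, each $R_j$ an $n$-dimensional rectangle inside $R$. Look at the $n$-th coordinate: each $R_j$ has the form $R_j'\times I_j$ where $R_j'$ is an $(n-1)$-dimensional rectangle and $I_j=\pi_n(R_j)\subseteq[a_n,b_n]$. The $2L$ endpoints of the intervals $I_1,\dots,I_L$, together with $a_n$ and $b_n$, cut $[a_n,b_n]$ into at most $2L+1$ consecutive generalized intervals $T_1,\dots,T_r$ with $r\leq 2L+1$; the key point is that on each slab $R\cap(\mathbb{Z}^{n-1}\times T_s)$ the set $S$ looks constant in the $e_n$ direction, i.e. $S\cap(\mathbb{Z}^{n-1}\times T_s)=\left(\bigcup_{j\in A_s}R_j'\right)\times T_s$ for some index set $A_s\subseteq\{1,\dots,L\}$, where $A_s$ is the set of $j$ with $T_s\subseteq I_j$ (after shrinking $T_s$ to the relevant generalized subinterval of $\pi_n(R)$, and discarding slabs that fall outside $[a_n,b_n]$). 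Then $(R\setminus S)\cap(\mathbb{Z}^{n-1}\times T_s)=\left(\sigma_n(R)\setminus\bigcup_{j\in A_s}R_j'\right)\times T_s$, and $\bigcup_{j\in A_s}R_j'$ is a union of at most $L$ many $(n-1)$-dimensional rectangles, so by the inductive hypothesis $\sigma_n(R)\setminus\bigcup_{j\in A_s}R_j'$ is a union of at most $(2L+1)^{n-1}$ pairwise disjoint generalized $(n-1)$-dimensional rectangles. Crossing each of these with $T_s$ and taking the union over $s=1,\dots,r$ with $r\leq 2L+1$ gives $R\setminus S$ as a union of at most $(2L+1)\cdot(2L+1)^{n-1}=(2L+1)^n$ pairwise disjoint generalized $n$-dimensional rectangles (disjointness across different $s$ is automatic since the $T_s$ are disjoint, and within a fixed $s$ it comes from the inductive hypothesis).

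The main obstacle I anticipate is purely a matter of careful bookkeeping rather than a genuine mathematical difficulty: one has to be precise about which slabs $T_s$ actually contribute (e.g.\ a slab entirely covered by $S$ contributes nothing, and one should make sure the endpoints used to slice include only those of the $I_j$ that meet $[a_n,b_n]$), and one must make sure the count stays at $2L+1$ slabs even in degenerate situations where some $R_j$ collapses or some $I_j$ share endpoints (which only helps). It is also worth noting the statement is about $n$-dimensional rectangles $R_j$ rather than generalized ones, which keeps the endpoint count clean, though the argument would go through with minor changes for generalized rectangles as well. Once the slicing is set up correctly, the induction closes immediately with the stated bound.
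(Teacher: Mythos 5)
Your proof is correct, and it is essentially the paper's argument recast as an induction on dimension: the paper simply performs the whole decomposition in one step, partitioning each $\pi_i(R)$ by all the projections $\pi_i(S_\ell)$ into at most $2L+1$ pairwise disjoint generalized intervals and taking the product grid of at most $(2L+1)^n$ cells, each of which lies inside or outside every $S_\ell$. Your slab-by-slab slicing along $e_n$ uses exactly the same $2L+1$-per-coordinate count (and may even produce a coarser decomposition within each slab), so the two proofs differ only in organization, not in the underlying idea.
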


\begin{proof} Write $S=\bigcup_{1\leq \ell\leq L} S_{\ell}$ as the union of $L$ many generalized $n$-dimensional rectangles. For each $1\leq i\leq n$ and $1\leq \ell\leq L$, $\pi_i(S_{\ell})$ divides the interval $\pi_i(R)$ into at most three pairwise disjoint generalized intervals. Taking all $\pi_i(S_{\ell})$, $1\leq \ell\leq L$, into account together, the interval $\pi_i(R)$ is divided into at most $2L+1$ many pairwise disjoint generalized intervals. Thus we can write $R$ as the union of at most $(2L+1)^n$ many pairwise disjoint generalized $n$-dimensional rectangles. Let $\mathcal{P}$ be the set of these generalized $n$-dimensional rectangles. Then $|\mathcal{P}|\leq (2L+1)^n$. Note that each $S_{\ell}$ is the union of some elements of $\mathcal{P}$, from which it follows that $R\setminus S$ is the union of some elements of $\mathcal{P}$.
\end{proof}

Now we are ready to prove the following theorem, which is a desirable strengthening of Lemma~\ref{lem:1direction}. Although the conclusion of the following theorem is neither necessary nor sufficient for our main theorems, its proof will be useful in the proof of our main theorems.

\begin{theorem}\label{thm:strongZn} Let $n, d_0\geq 1$ be positive integers. Then there is a positive integer $D_0$ such that for any $n$-dimensional rectangle $R$ with side length at least $D_0$ in each direction, there is a subset $M\subseteq R$ such that
\begin{enumerate}
\item[(1)] for any distinct $x,y\in M$, $\rho(x,y)\geq d_0$;
\item[(2)] for any $1\leq i\leq n$ and any $x\in R$, there is $a_i\in \mathbb{Z}$ such that $x+a_ie_i\in M$.
\end{enumerate}
\end{theorem}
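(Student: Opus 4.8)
The plan is to prove this by induction on $n$, using the ``packaging and spacing'' technique developed in Lemmas~\ref{lem:1direction}--\ref{lem:division}. For $n=1$ one simply takes $D_0 = d_0$ (or any value $\geq d_0$) and lets $M$ be an arithmetic progression in $R$ with common difference $d_0$; this clearly satisfies (1) and (2). For the inductive step, assume the theorem holds in dimension $n-1$ with constant $D_0' := D_0'(n-1, d_0)$. The idea is to slice the $n$-dimensional rectangle $R$ along the direction $e_n$ and build the marker set one ``direction at a time.''

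The construction I would carry out goes in two phases. \emph{Phase 1 (handle the first $n-1$ directions).} Choose a thin slab $T = \sigma_n(R) \times J_0 \subseteq R$ where $J_0$ is a subinterval of $\pi_n(R)$ of length at least $D_0'$; inside this slab, the face $\sigma_n(R)$ is an $(n-1)$-dimensional rectangle with all side lengths at least $D_0'$, so by the inductive hypothesis there is a marker set $N \subseteq \sigma_n(R)$ with pairwise distances $\geq d_0$ and meeting every line in every direction $e_1, \dots, e_{n-1}$. Spread $N$ out inside $T$ using Lemma~\ref{lem:1direction} applied in the direction $e_n$ (noting $J_0$ is long enough) to obtain $M_0 \subseteq T$ still satisfying (1) and such that every point of $T$ can reach $M_0$ by moving in direction $e_n$; moreover, by construction $M_0$ meets every line of $R$ in each of the directions $e_1, \dots, e_{n-1}$ (since each such line, when intersected with the slab $T$, hits a translate of $N$). \emph{Phase 2 (handle the last direction).} Now I must arrange that every line of $R$ in direction $e_n$ also meets $M$. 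The key point is that $\sigma_n(R)$ decomposes: the projection of the slab footprint of $M_0$ covers all of $\sigma_n(R)$ already for directions $e_1,\dots,e_{n-1}$, but the $e_n$-lines through $M_0$ are exactly those passing through points of $N$ (finitely many, $|N|$ of them). For the remaining $e_n$-lines — i.e. those over $\sigma_n(R) \setminus N$ — I use Lemma~\ref{lem:division}: $\sigma_n(R) \setminus N$ is a union of at most $(2|N|+1)^{n-1}$ pairwise disjoint generalized $(n-1)$-rectangles, call this family $\mathcal{P}$. Apply Lemma~\ref{lem:1directionmultiple} in direction $e_n$ with $\mathcal{P}$ as the given family of footprints and $\mathcal{J} = \{J_0\}$ (so $m = 1$) to obtain, for each $P \in \mathcal{P}$, a subinterval $K_P \subseteq \pi_n(R)$ of length $\geq d_0$, with the $K_P$ mutually $d_0$-separated and each $d_0$-far from $J_0$; this packages the generalized $n$-rectangles $Q_P = P \times K_P$. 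Inside each $Q_P$ put a single marker point (or apply Lemma~\ref{lem:1direction} if one wants the full strength), and let $M$ be the union of $M_0$ with all these points.

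The verification: (1) holds because the $Q_P$ are mutually $d_0$-separated and all $d_0$-separated from the slab $T \supseteq M_0$ (the $K_P$ being far from $J_0$), while within $M_0$ separation holds by Phase 1 and within each $Q_P$ by construction. For (2): given $x \in R$ and $1 \leq i \leq n-1$, the $e_i$-line through $x$, restricted to $T$, meets a translate of $N \subseteq M_0 \subseteq M$; given $i = n$, the point $\sigma_n(x) \in \sigma_n(R)$ lies either in $N$ — in which case the $e_n$-line through $x$ meets $M_0$ — or in some $P \in \mathcal{P}$, in which case that line passes through $Q_P$ and hence meets $M$. Finally $D_0 := \max(D_0', 3d_0(2 + (2|N|+1)^{n-1} + 1)) + D_0'$ or some such explicit bound works; one must be slightly careful that the bound does not depend on $R$, which it does not, since $|N|$ depends only on $n-1$ and $d_0$ via the inductive construction (indeed one should track that $|N|$ is bounded in terms of $D_0'$ and $n$, e.g. $|N| \leq (D_0')^{n-1}$).

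\textbf{Main obstacle.} The delicate point is \emph{Phase 2}: ensuring that after handling directions $e_1, \dots, e_{n-1}$ in a single slab, the leftover $e_n$-lines can be covered without destroying property (1). This requires the footprints $\mathcal{P}$ tiling $\sigma_n(R) \setminus N$ to be honest pairwise-disjoint generalized rectangles (so Lemma~\ref{lem:1directionmultiple} applies), which is exactly what Lemma~\ref{lem:division} delivers, and it requires $R$ to be long enough in direction $e_n$ to absorb both the slab $J_0$ and all the spaced-out intervals $K_P$ — hence the side-length bound must be chosen \emph{after} we know $|\mathcal{P}| \leq (2|N|+1)^{n-1}$, i.e. the quantifier order ``$D_0$ depends only on $n, d_0$'' is maintained because $|N|$ is a function of $n-1$ and $d_0$ alone. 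A secondary subtlety is that $N$ itself must be placed so that it genuinely meets \emph{every} $e_i$-line of the big rectangle $R$ for $i < n$, not just lines internal to some subrectangle; this is automatic since the inductive hypothesis gives a marker set for the full face $\sigma_n(R)$, and every $e_i$-line of $R$ projects onto an $e_i$-line of $\sigma_n(R)$.
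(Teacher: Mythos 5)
Your Phase 1 does not do what you claim, and this is the heart of the matter. You place $M_0$ inside the thin slab $T=\sigma_n(R)\times J_0$ and assert that $M_0$ ``meets every line of $R$ in each of the directions $e_1,\dots,e_{n-1}$ (since each such line, when intersected with the slab $T$, hits a translate of $N$).'' But a line in direction $e_i$ with $i<n$ has a \emph{constant} $e_n$-coordinate; if that coordinate lies outside $J_0$, the line is disjoint from $T$, hence from $M_0$, and the points you add in Phase 2 only serve direction $e_n$. So condition (2) fails for every $x$ with $\pi_n(x)\notin J_0$ and every $i<n$. Put differently, (2) forces $\sigma_i(M)=\sigma_i(R)$ for \emph{every} $i$, so the markers serving direction $e_i$ must appear at every $e_n$-height; no single thin slab can supply them. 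Your closing ``secondary subtlety'' remark conflates the projection of a line with the line itself: the projection of an $e_i$-line meets $N$ inside $\sigma_n(R)$, but the line itself meets your markers only when its height happens to lie in $J_0$. This is precisely the difficulty the paper's proof is built around: for each direction $e_i$ it constructs a whole family $\mathcal{P}_i$ of packages that are thin in direction $e_i$ but \emph{long in the transverse directions}, with $\bigcup\{\sigma_i(P)\colon P\in\mathcal{P}_i\}=\sigma_i(R)$, and the genuine work (the special packages $R_P$, the cascade $d_1>\dots>d_n$, the counts $N_i$) goes into keeping these $n$ interleaved families separated from one another. A naive repair of your plan --- stacking shifted copies of $N$ at every height as in the proof of Lemma~\ref{lem:1direction} --- fails too, because shifting a copy in direction $e_1$ changes which $e_2$-lines it covers; preserving multi-directional covering under the shifts needed for separation is exactly the tension the paper's per-direction packaging resolves.

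There is also a quantitative gap that would break the quantifier order even if Phase 1 were repaired. You claim $|N|$ is bounded in terms of $n-1$ and $d_0$ (e.g.\ $|N|\leq (D_0')^{n-1}$). It is not: $N$ lives in $\sigma_n(R)$, whose side lengths are only bounded \emph{below} by $D_0'$, and since $N$ must project onto all of $\sigma_j(\sigma_n(R))$ for each $j<n$, already for $n-1=2$ one has $|N|$ at least the side length of $\sigma_n(R)$, which grows with $R$. Hence $|\mathcal{P}|\leq(2|N|+1)^{n-1}$ from Lemma~\ref{lem:division} is unbounded, and the hypothesis of Lemma~\ref{lem:1directionmultiple} would force the $e_n$-side of $R$ to be at least $3d_0(2m+|\mathcal{P}|+1)$, a quantity depending on $R$ itself --- so no $D_0$ depending only on $n,d_0$ emerges. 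The paper sidesteps this because its leftover regions are counted against the number of \emph{packages} (at most $N_i$, a constant depending only on $n$ and $d_0$), never against the number of marker points. A smaller issue: a single point inside each $Q_P$ does not cover all $e_n$-lines through $Q_P$; applying Lemma~\ref{lem:1direction} there is mandatory, which also forces $K_P$ to have length at least $2d_0^n-d_0$ rather than $d_0$.
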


\begin{proof} We first give an informal description of the construction of $M\subseteq R$. This helps in determining how large $D_0$ needs to be. In the construction of $M$, we inductively define a sequence of collections $\mathcal{P}_i$, $1\leq i\leq n$, of pairwise disjoint generalized $n$-dimensional subrectangles of $R$ and a decreasing sequence of positive integers $d_i>2d_0^n-d_0$ so that 
\begin{enumerate}
\item[(i)] $\bigcup\{\sigma_i(P)\colon P\in\mathcal{P}_i\}=\sigma_i(R)$;
\item[(ii)] for any $P\in \mathcal{P}_i$, the length of $\pi_i(P)$ is at least $2d_0^n-d_0$ but at most $d_i$;
\item[(iii)] for distinct $P, Q\in\mathcal{P}_i$, $\rho(P, Q)\geq d_i$;
\item[(iv)] for distinct $1\leq j<i\leq n$, if $P\in\mathcal{P}_i$ and $Q\in \mathcal{P}_j$, then $\rho(P, Q)\geq d_i$.
\end{enumerate}
Granting such sequences, we then apply Lemma~\ref{lem:1direction} to each $P\in \mathcal{P}_i$ in the direction $e_i$ to obtain a marker set $M_P$. Then the union
$$ M=\bigcup\{M_P\colon P\in\mathcal{P}_i, 1\leq i\leq n\} $$
is as required. 

We fix a sequence of positive integers $d_1,\dots, d_n$ so that
$$ 2d_0^n-d_0<d_n<d_{n-1}<\cdots<d_2<d_1 $$
and for all $1\leq i< n$, 
$$ d_i>5(n-i)d_{i+1}>5(d_{i+1}+\cdots+d_n). $$
For example, we can let $d_n=6nd_0^n$ and $d_i=6nd_{i+1}$ for all $1\leq i< n$. 
Then $$ d_1=(6nd_0)^n. $$

In the construction of $\mathcal{P}_{i+1}$, consider $\mathcal{Q}_i=\mathcal{P}_1\cup\cdots\cup \mathcal{P}_i$. We use Lemma~\ref{lem:division} to divide $\sigma_i(R)$ into at most $(2|\mathcal{Q}_i|+1)^{n-1}$ many (note: this number is subject to revisions later) pairwise disjoint generalized $(n-1)$-dimensional rectangles, and then use (a variation of) the packaging and spacing technique of Lemma~\ref{lem:1directionmultiple} to obtain the new collection $\mathcal{P}_{i+1}$. For this plan to work, we set inductively 
$$ N_1=4^{n-1}, \ N_{i+1}=4^{n-1}(2N_i+1)^{n-1}+2N_i+1. $$
Now let
$$D_0=4N_nd_1.$$  

For our plan to work, we will maintain the following inductive hypotheses on $\mathcal{P}_i$ in addition to (i)--(iv) above:
\begin{enumerate}
\item[(v)] for any $P\in\mathcal{P}_i$ and any $1\leq j\leq n$, the side lengths of $P$ in the direction $e_j$ is at most half of the side length of $R$ in the direction $e_j$;
\item[(vi)] $|\mathcal{Q}_i|\leq N_i$.
\end{enumerate}

Now suppose $R$ is an $n$-dimensional rectangle whose side lengths in all directions are at least $D_0$. Let $w_i$ be the side length of $R$ in the direction $e_i$. We are now ready to define the collections $\mathcal{P}_i$ inductively. 

To define $\mathcal{P}_1$ we consider $\sigma_1(R)$. Write $\sigma_1(R)$ as the union of $4^{n-1}$ many pairwise disjoint $(n-1)$-dimensional rectangles whose side lengths in the direction $e_i$, $1<i\leq n$, are approximately $\frac{1}{4}w_i$. Since $w_i\geq D_0=4N_nd_1$, these side lengths are at least $N_nd_1$, but certainly no more than $\frac{1}{2}w_i$. Applying Lemma~\ref{lem:1directionmultiple} with $m=0$, $d=d_1$ and $k=4^{n-1}$, we obtain a collection $\mathcal{P}_1$ of $n$-dimensional subrectangles of $R$. To ensure (ii), we might need to slightly shrink the subrectangles in the direction $e_1$ so that their side lengths in this direction are at least $2d_0^n-d_0$ and at most $d_1$. It is now easy to see that $\mathcal{P}_1$ satisfies all inductive hypotheses (i)--(v). For (vi), note that $|\mathcal{Q}_1|=|\mathcal{P}_1|=4^{n-1}=N_1$. 

For the general inductive step $1\leq i<n$, assume we have defined $\mathcal{P}_1,\dots, \mathcal{P}_{i}$ so that the inductive hypotheses (i)--(vi) hold. Let $\mathcal{Q}_{i}=\mathcal{P}_1\cup\cdots\cup\mathcal{P}_i$. By the inductive hypothesis (vi), $|\mathcal{Q}_i|\leq N_i$. Then by the inductive hypotheses (iii) and (iv), for any distinct $P, Q\in \mathcal{Q}_i$, $\rho(P, Q)\geq d_i$. Now consider $\sigma_{i+1}(R)$. We need to define some special packages. For each $P\in \mathcal{Q}_i$, define
$$ T_P=\left\{ x\in R\colon \rho(x, P)\leq 2d_{i+1}\right\}. $$
For distinct $P, Q\in \mathcal{Q}_i$, we have $\rho(T_P, T_Q)\geq d_i-4d_{i+1}>d_{i+1}$. A moment of reflection shows that for each $P\in\mathcal{Q}_i$, we can find an $n$-dimensional rectangle $R_P$ so that
\begin{enumerate}
\item[(a)] $R_P\subseteq T_P$ and $\sigma_{i+1}(R_P)=\sigma_{i+1}(T_P)$;
\item[(b)] the side length of $R_P$ in the direction of $e_{i+1}$ is at least $2d_0^n-d_0$ and at most $d_{i+1}$;
\item[(c)] $\rho(R_P, P)\geq d_{i+1}$.
\end{enumerate}
Note that $|\{R_P\colon P\in\mathcal{Q}_i\}|=|\mathcal{Q}_i|\leq N_i$. Let $\mathcal{J}=\{\pi_{i+1}(R_P)\colon P\in\mathcal{Q}_i\}$. Then $\mathcal{J}$ is a collection of at most $N_i$ many subintervals of $\pi_{i+1}(R)$ each of whose length is at most $d_{i+1}$.

Next let $S=\bigcup\{\sigma_{i+1}(R_P)\colon P\in\mathcal{Q}_i\}$. Applying Lemma~\ref{lem:division} to $\sigma_{i+1}(R)\setminus S$, we obtain at most $(2|\mathcal{Q}_i|+1)^{n-1}$ many pairwise disjoint generalized $(n-1)$-dimensional rectangles. To further satisfy (v), we subdivide each of these generalized $(n-1)$-dimensional rectangles into up to $4^{n-1}$ many subsubrectangles so that their side lengths are at most $\frac{1}{4}w_i+1\leq \frac{1}{2}w_i$. Let $\mathcal{S}$ denote the resulting collection of generalized $(n-1)$-dimensional rectangles. Then 
$$ |\mathcal{S}|\leq 4^{n-1}(2|\mathcal{Q}_i|+1)^{n-1}. $$
Applying Lemma~\ref{lem:1directionmultiple} with $m=|\mathcal{J}|\leq N_i$, $d=d_{i+1}$ and $k=|\mathcal{S}|\leq 4^{n-1}(2N_i+1)^{n-1}$, we obtain a collection $\mathcal{R}$ of generalized $n$-dimensional subrectangles of $R$ so that 
\begin{enumerate}
\item[(d)] for any $P\in \mathcal{R}$, the side length of $P$ in the direction $e_{i+1}$ is at least $2d_0^n-d_0$ and at most $d_{i+1}$;
\item[(e)] for any distinct $P, Q\in \mathcal{R}$, $\rho(P,Q)\geq d_{i+1}$;
\item[(f)] for any $P\in\mathcal{R}$ and $Q\in\mathcal{Q}_i$, $\rho(P, Q)\geq d_{i+1}$;
\item[(g)] for any $P\in\mathcal{R}$ and $Q\in\mathcal{Q}_i$, $\rho(P, R_Q)\geq d_{i+1}$. 
\end{enumerate}
For this application of Lemma~\ref{lem:1directionmultiple} to be valid, we note that
$$\begin{array}{rcl} w_{i+1}\geq D=4N_nd_1\!\!\!\!&\geq &\!\!\!\!3N_{i+1}d_{i+1} \\
&=&\!\!\!\! 3d_{i+1}(4^{n-1}(2N_i+1)^{n-1}+2N_i+1)\geq 3d(2m+k+1). 
\end{array}$$ 
Finally, define
$$ \mathcal{P}_{i+1}=\mathcal{R}\cup\{R_P\colon P\in\mathcal{Q}_i\}. $$
Then 
$$|\mathcal{Q}_{i+1}|\leq |\mathcal{R}|+2|\mathcal{Q}_i|=|\mathcal{S}|+2|\mathcal{Q}_i|\leq 4^{n-1}(2N_i+1)^{n-1}+2N_i\leq N_{i+1}. $$
Thus the inductive hypotheses are maintained.
\end{proof}

\section{Strong marker sets in $F(2^{\mathbb{Z}^n})$\label{sec:4}}

In this section we will prove the following main theorem of the paper. 

\begin{theorem} \label{thm:main} Let $n, d_0\geq 1$ be positive integers. Then there is a positive integer $D$ and a clopen subset $M\subseteq F(2^{\mathbb{Z}^n})$ such that
\begin{enumerate}
\item[(1)] for any distinct $x, y\in M$ in the same orbit, $\rho(x,y)\geq d_0$;
\item[(2)] for any $1\leq i\leq n$ and any $x\in F(2^{\mathbb{Z}^n})$, there are non-negative integers $a_i, b_i\leq D$ such that $a_ie_i\cdot x\in M$ and $-b_ie_i\cdot x\in M$.
\end{enumerate}
\end{theorem}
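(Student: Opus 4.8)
The plan is to carry out the construction from the proof of Theorem~\ref{thm:strongZn} not inside a single fixed rectangle but globally on $F(2^{\mathbb{Z}^n})$, substituting a clopen marker structure of large scale for the ambient rectangle, and tracking clopenness throughout.

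First I would fix the constant $D_0=D_0(n,d_0)$ and the auxiliary parameters $d_1>d_2>\dots>d_n>2d_0^n-d_0$ and $N_1<\dots<N_n$ from the proof of Theorem~\ref{thm:strongZn}, then pick a large integer $N$, a multiple of $d_1$ with $N\ge D_0$ and enough slack for the collars below, and use Lemma~\ref{lem:basic2} --- together with the (routine, carried out in the cited works) passage from a clopen $N$-separated and $N$-dense marker set to clopen rectangular regions --- to obtain a clopen partition of each orbit of $F(2^{\mathbb{Z}^n})$ into $n$-dimensional rectangular regions with all side lengths in $[N,2N]$. This scaffolding is genuinely clopen: the region containing $x$, and the position of $x$ inside it, depend on only finitely many coordinates of $x$. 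I would also want the scaffolding to be \emph{staggered}, so that along each direction $e_i$ two consecutive regions are offset from one another by more than $d_1$ in the $e_i$-perpendicular coordinates; this can likewise be arranged. Then I would set $D$ to be, say, $8N$ (any sufficiently large multiple of $N$ will do).

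Next I would build clopen sets $M_1,\dots,M_n\subseteq F(2^{\mathbb{Z}^n})$ by recursion on $i$, maintaining: (a) every $e_i$-line meets $M_i$ within distance $D$ in each of the two directions along $e_i$; (b) $M_i$ is $d_0$-separated in each orbit; (c) $\rho(M_i,\,M_1\cup\dots\cup M_{i-1})\ge d_0$ in each orbit; (d) along every $e_k$-line with $k>i$, consecutive points of $M_i$ are at $\rho$-distance at least $d_i$. Granting these, $M=M_1\cup\dots\cup M_n$ is clopen, is $d_0$-separated by (b) and (c), and meets every $e_i$-line within $D$ in both directions by (a), so it witnesses the theorem with this $D$. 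To construct $M_i$ I would imitate the $i$-th stage of the proof of Theorem~\ref{thm:strongZn}, region by region: inside a region $B$ I would use Lemma~\ref{lem:division} to split $\sigma_i(B)$ into boundedly many generalized $(n-1)$-dimensional subrectangles (subdividing so that each has all side lengths at most $\tfrac12$ the corresponding side of $B$), take $\mathcal J$ to be the $\pi_i$-images of the rectangles making up $M_1\cup\dots\cup M_{i-1}$ near $B$ together with the $\pi_i$-images of the boundary collars to be quarantined, note that by property (d) at the earlier stages these are boundedly many short intervals, and then apply Lemma~\ref{lem:1directionmultiple} in the direction $e_i$ to obtain a collection $\mathcal P_i^{B}$ of generalized $n$-dimensional subrectangles of $B$, each with $e_i$-side length in $[2d_0^n-d_0,\,d_i]$, pairwise $d_i$-separated, $d_i$-separated from $M_1\cup\dots\cup M_{i-1}$ and from the quarantined collars, and with $\sigma_i$-projections covering $\sigma_i(B)$ outside a $d_i$-collar of the faces of $B$ not perpendicular to $e_i$. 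Finally I would put $M_i=\bigcup_{B}\bigcup_{P\in\mathcal P_i^{B}}M_P$, with $M_P$ produced by Lemma~\ref{lem:1direction} applied to $P$ in the direction $e_i$. Clopenness of $M_i$ is immediate; properties (b)--(d) are checked exactly as (iii)--(vi) are checked in the proof of Theorem~\ref{thm:strongZn}; and (a) holds because an $e_i$-line, running through the regions it meets, must enter the uncollared core of one of them within distance $2N$ --- here the staggering is used, to stop an $e_i$-line from hugging region boundaries forever --- and there it meets $M_i$.

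The hard part, and the only place requiring genuinely new work beyond Theorem~\ref{thm:strongZn}, is guaranteeing the $d_0$-separation of $M$ across the boundaries of the clopen scaffolding while still covering the axis-lines that run near those boundaries. I expect this to be handled by the interplay of three features: a single large scale $N$ with slack built in; processing the directions at strictly decreasing scales $d_1>\dots>d_n$, so that the markers for later directions have room to dodge those already placed (property (d)); and a staggered scaffolding, so that no axis-line can stay in the boundary collars of its regions indefinitely. Making sure these genuinely combine --- in particular that Lemma~\ref{lem:1directionmultiple} can always be invoked because the quarantined set stays sparse along the relevant lines --- is the crux of the argument.
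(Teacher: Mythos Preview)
You correctly identify the crux --- separating $M$ across region boundaries while still hitting every axis-line that runs near those boundaries --- but your proposed mechanism, a \emph{staggered} scaffolding, is asserted rather than established. A clopen rectangular partition of $F(2^{\mathbb{Z}^n})$ in which consecutive regions along every $e_i$ are offset by more than $d_1$ in the perpendicular coordinates is not supplied by the marker regions lemma, and it is not clear how to obtain one; nothing prevents the tiling on a given orbit from being (locally) a grid, in which case an $e_i$-line near a face parallel to $e_i$ stays in the collar of every region it meets and your argument for (a) fails. Even granting staggering between any two consecutive regions, your claim that an $e_i$-line ``must enter the uncollared core of one of them within distance $2N$'' is not justified: a single offset does not prevent the line from landing in the collar again two regions later.

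The paper avoids this difficulty by never collaring the faces parallel to $e_i$. Inductive hypothesis (i) requires $\bigcup\{\sigma_i(P):P\in\mathcal P_i^R\}=\sigma_i(R)$ for every marker region $R$, so each $e_i$-line through $R$ meets a package inside $R$ and (2) follows immediately with $D=2D_1$. Cross-boundary separation is then achieved purely in the $\pi_i$-coordinate. The new ingredient is a finite colouring $X_1,\dots,X_H$ of the set of marker regions, obtained from an auxiliary marker set at a larger scale $\Delta>6D_1$, with the property that same-coloured regions are pairwise non-adjacent. Within each round $i$ the regions are processed colour by colour; when processing $R$ one enlarges $\mathcal J$ by the intervals $\pi_i(P)\cap\pi_i(R)$ for every package $P\in\mathcal P_i^T$ already placed in an adjacent region $T$, and Lemma~\ref{lem:1directionmultiple} then forces the new packages in $R$ to be $d_i$-separated from those of $T$ in the $e_i$-coordinate, hence in $\rho$. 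This colouring-and-sequential-processing step, together with the new inductive hypothesis (vii) it maintains, is precisely the ``genuinely new work'' your outline promises but does not supply.
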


The rest of this section is devoted to a proof of Theorem~\ref{thm:main}. We first recall the following basic fact.

\begin{lemma}[Marker regions lemma {\cite[Theorem 3.1]{GJ15}}] \label{lem:basicmarker}
Let $d\geq 1$ be a positive integer. Then there is a clopen equivalence relation $E^n_d$ on $F(2^{\mathbb{Z}^n})$ such that each of the $E^n_d$-equivalence classes is an $n$-dimensional rectangle with side lengths either $d$ or $d+1$.
\end{lemma}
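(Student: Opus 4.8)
The plan is to derive the statement from the basic clopen marker lemma (Lemma~\ref{lem:basic2}) in two moves: an elementary arithmetic reduction to a ``coarse'' form of the statement, followed by the construction of that coarse form, which is where all the difficulty lies. Since $\gcd(d,d+1)=1$, the Frobenius number of $\{d,d+1\}$ is $d^2-d-1$, so every integer $\ell\ge d^2-d$ admits a representation $\ell=\alpha d+\beta(d+1)$ with $\alpha,\beta\ge 0$; fix once and for all a canonical choice $\ell\mapsto(\alpha(\ell),\beta(\ell))$ (for instance the one minimizing $\alpha+\beta$, then $\alpha$). Given an $n$-dimensional rectangle $R=I_1\times\cdots\times I_n$ with every $|I_j|\ge d^2-d$, cut each $I_j$ into $\alpha(|I_j|)$ consecutive intervals of length $d$ followed by $\beta(|I_j|)$ of length $d+1$; the product grid partitions $R$ into $n$-dimensional rectangles all of whose side lengths lie in $\{d,d+1\}$, and the cell containing a point $x\in R$ depends only on the coordinates of $x$ relative to a fixed corner of $R$. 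Hence it suffices to produce, for some $D\ge d^2-d$, a clopen equivalence relation $E'$ on $F(2^{\mathbb{Z}^n})$ each of whose classes is an $n$-dimensional rectangle with all side lengths in $[D,2D)$: refining each $E'$-class by the grid above yields $E^n_d$, which remains clopen since the refining rule inside a class is locally determined once the class is.

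Next, the one-dimensional tool. For each $1\le i\le n$, the shift of $\langle e_i\rangle\cong\mathbb{Z}$ on $F(2^{\mathbb{Z}^n})$ is a free continuous action on a zero-dimensional Polish space, so the marker lemma in the form underlying Lemma~\ref{lem:basic2} (whose proof is not specific to the full $\mathbb{Z}^n$-action) yields a clopen set whose trace on every $e_i$-line is a $D$-separated, $D$-dense subset of that line; between consecutive such points the gap length lies in $[D,2D)\subseteq[d^2-d,\infty)$. This already settles $n=1$ (cut each gap via $(\alpha,\beta)$), and more importantly it supplies, in each direction and inside any clopen region, a controllable family of ``cut positions'' with which to finish off that direction.

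Finally, the coarse regions, which is the main obstacle. I would build $E'$ by recursion on $n$, working throughout in the generality of an arbitrary free continuous $\mathbb{Z}^m$-action on a zero-dimensional Polish space. The case $n=1$ is the previous paragraph (take the intervals between consecutive marker points). For the step from $n$ to $n+1$, applying the construction to $\langle e_1,\dots,e_n\rangle$ gives a clopen partition into $n$-dimensional ``slabs'': rectangles of side lengths in $[D,2D)$ in the directions $e_1,\dots,e_n$ and of thickness $1$ in $e_{n+1}$; the task is to group consecutive slabs along $e_{n+1}$ into genuine $(n+1)$-dimensional rectangles, again with side lengths in $[D,2D)$. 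The heart of the matter is that the slab decompositions on neighbouring $\langle e_1,\dots,e_n\rangle$-orbits need not line up, so the slabs do not stack into columns by themselves; one must reorganize the lower-dimensional construction so that, over bounded stretches in the $e_{n+1}$-direction, the slab decomposition becomes $e_{n+1}$-translation invariant, and only then apply the one-dimensional tool in direction $e_{n+1}$. Carrying out this reorganization clopenly --- shifting the slab boundaries by amounts of order $D$ to absorb the mismatches between adjacent slabs, with the packaging-and-spacing bookkeeping of Lemmas~\ref{lem:dim1packagingandspacing}, \ref{lem:1directionmultiple} and~\ref{lem:division} applied inside the slabs --- is in substance the rectangular marker construction of \cite[Theorem~3.1]{GJ15}; once it is in place, verifying that all side lengths remain in $[D,2D)$ and that clopenness is preserved is routine. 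In short, the arithmetic reduction and the one-dimensional tool are soft, and the construction of the coarse rectangular regions --- reconciling the lower-dimensional partitions across the new direction --- is where the real work is.
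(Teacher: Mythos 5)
The paper offers no proof of this lemma at all --- it is quoted as a black box from \cite[Theorem 3.1]{GJ15} --- so the only question is whether your argument stands on its own, and it does not: it is circular at its core. Your soft steps are fine: the Frobenius-type refinement (every $\ell\geq d^2-d$ is $\alpha d+\beta(d+1)$ with $\alpha,\beta\geq 0$, and refining a clopen family of uniformly bounded rectangles by a canonical rule stays clopen), and the base case $n=1$, which is just Lemma~\ref{lem:basic2} for $\mathbb{Z}$. But the entire content of the lemma is the construction of the coarse clopen rectangular regions, and at exactly that point you write that reconciling the slab decompositions across the new direction ``is in substance the rectangular marker construction of \cite[Theorem~3.1]{GJ15}.'' That is the statement to be proved; deferring the heart of the argument to it turns the proposal into a reduction of the lemma to (essentially) itself. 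Nothing in the sketch explains how to shift the slab boundaries clopenly so that neighbouring decompositions line up --- which is precisely where the difficulty, and the bulk of the actual proof in \cite{GJ15}, lies; the packaging-and-spacing lemmas of Section~\ref{sec:3} do not by themselves address this alignment problem.

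There is a second, independent gap in the inductive step. You propose to run the construction ``in the generality of an arbitrary free continuous $\mathbb{Z}^m$-action on a zero-dimensional Polish space,'' and in particular you invoke ``the marker lemma in the form underlying Lemma~\ref{lem:basic2}'' for the $\langle e_i\rangle$-sub-action to obtain a clopen set whose trace on every $e_i$-line is $D$-separated and $D$-dense. Lemma~\ref{lem:basic2} gives density with respect to $\rho$ on $F(2^{\mathbb{Z}^n})$, not density along individual lines, and its proof exploits the finite-alphabet shift structure of $F(2^{\mathbb{Z}^n})$ (finitely many patterns on a fixed window). The $e_i$-sub-action is a $\mathbb{Z}$-shift whose alphabet is the Cantor set $2^{\mathbb{Z}^{n-1}}$, so that argument does not transfer, and it is not known to you (nor asserted anywhere in this paper) that clopen markers with bounded gaps exist for arbitrary free continuous $\mathbb{Z}$-actions on zero-dimensional Polish spaces. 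Indeed, producing a clopen set that meets every $e_i$-line with uniformly bounded gaps is exactly the hard content of Theorem~\ref{thm:main} of this paper, which is proved \emph{using} the marker regions lemma, not the other way around. So both the coarse-region construction and the one-dimensional tool in the sub-action setting are unproved, and the latter cannot be extracted from Lemma~\ref{lem:basic2} as stated.
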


We informally call the $E^n_d$-equivalence classes {\em marker regions}. The above lemma can be informally stated as there are clopen marker regions which are $n$-dimensional rectangles with side lengths either $d$ or $d+1$. 

We will also use the basic clopen marker lemma from \cite{GJ15} which we recalled in the Introduction. For the convenience of the reader we restate it here.

\begin{lemma}[Basic clopen marker lemma {\cite[Lemma 2.1]{GJ15}}]\label{lem:basic2} For any positive integer $d\geq 1$, there is a clopen set $M\subseteq F(2^{\mathbb{Z}^n})$ such that
\begin{enumerate}
\item[(1)] for any distinct $x, y\in M$ in the same orbit, $\rho(x,y)\geq d$;
\item[(2)] for any $x\in F(2^{\mathbb{Z}^n})$ there is $y\in M$ such that $\rho(x,y)<d$.
\end{enumerate}
\end{lemma}

To prove Theorem~\ref{thm:main}, we fix positive integers $n, d_0\geq 1$. Let $d_1, \dots, d_n$ be the same sequence of positive integers as defined in the proof of Theorem~\ref{thm:strongZn}. Redefine the sequence $N_i$ inductively by
$$ N_1=4^{n-1}, \ N_{i+1}=4^{n-1}(2N_i+1)^{n-1}+(n-1)2^{n+1}N_i+5. $$
Let $$D_1=4N_nd_1$$ and $$D=2D_1.$$

Applying Lemma~\ref{lem:basicmarker} to $d=D_1$, we obtain a clopen equivalence relation $E$ on $F(2^{\mathbb{Z}^n})$ whose marker regions are $n$-dimensional rectangles with side lengths either $D_1$ or $D_1+1$. For each marker region $R$, let $x_R$ be the ``least" corner of $R$, i.e., the extreme point of $R$ with index $1$ in the canonical enumeration of all extreme points of $R$. Let $X=\{x_R\colon \mbox{ $R$ is a marker region}\}$. Then $X$ is a clopen subset of $F(2^{\mathbb{Z}^n})$. 

Let $\Delta>6D_1$ be an integer. Applying Lemma~\ref{lem:basic2} to $d=\Delta$, we obtain a marker set $M_\Delta$. Enumerate the set $\{g\in\mathbb{Z}^n\colon \|g\|\leq \Delta\}$ as $g_1,\dots, g_H$. We define a partition $\{X_1,\dots, X_H\}$ of $X$ as follows:
$$\begin{array}{rcl} X_1\!\!\!\!&=&\!\!\!\! g_1\cdot M_\Delta\cap X, \\
X_h\!\!\!\!&=&\!\!\!\! g_h\cdot M_\Delta\cap X \setminus \bigcup_{1\leq j<h} X_j, \mbox{ for $1<h\leq H$.}
\end{array}$$
Note that for any $1\leq h\leq H$ and any distinct marker regions $R$ and $T$ with $x_R, x_T\in X_h$, $\rho(R, T)> 2D_1$ since $\rho(x_R, x_T)>6D_1$; in particular, $R$ and $T$ are not adjacent to each other. 

Our construction of $M$ will take place in $n$ many rounds, and each round consists of $H$ many steps. For round $i$, $1\leq i\leq n$, and step $h$, $1\leq h\leq H$, we consider those marker regions $R$ with $x_R\in X_h$ and construct a collection $\mathcal{P}_i^R$ of pairwise disjoint generalized $n$-dimensional subrectangles of $R$. As we noted above, for the same round and step, the distinct marker regions considered are not adjacent to each other, and hence the constructions of their respective $\mathcal{P}^R_i$s do not affect each other. 

The construction of $\mathcal{P}_i^R$ is similar to that in the proof of Theorem~\ref{thm:strongZn}. They will satisfy the following inductive hypotheses for any marker region $R$:
\begin{enumerate}
\item[(i)] $\bigcup\{\sigma_i(P)\colon P\in\mathcal{P}_i^R\}=\sigma_i(R)$;
\item[(ii)] for any $P\in\mathcal{P}_i^R$, the length of $\pi_i(P)$ is at least $2d_0^n-d_0$ and at most $d_i$;
\item[(iii)] for distinct $P, Q\in\mathcal{P}_i^R$, $\rho(P, Q)\geq d_i$;
\item[(iv)] for distinct $1\leq j<i\leq n$, if $P\in \mathcal{P}_i^R$ and $Q\in\mathcal{P}_j^R$, then $\rho(P, Q)\geq d_i$;
\item[(v)] for any $P\in\mathcal{P}_i^R$, all side lengths of $P$ are at most $\frac{1}{2}D_1$;
\item[(vi)] $|\mathcal{P}_i^R\cup\cdots\cup \mathcal{P}_1^R|\leq N_i$.
\item[(vii)] for any marker region $T\neq R$, for any $1\leq j\leq i\leq n$, for any $P\in \mathcal{P}_i^R$ and $Q\in \mathcal{P}_j^T$, $\rho(P, Q)\geq d_i$.
\end{enumerate}
Note that the first six of these inductive hypotheses are similar to those in the proof of Theorem~\ref{thm:strongZn}. The last one is new and will require some work to be maintained. Maintaining this inductive hypothesis was the main motivation to redefine the $N_i$ sequence.

We are now ready to define these sequences of packages. In the first round we define $\mathcal{P}_1^R$ for all marker regions $R$. This is done in $H$ many steps. In each step $h$, $1\leq h\leq H$, we simultaneously consider all those marker regions $R$ with $x_R\in X_h$. Recall that any two of them are not adjacent. So let $R$ be such a marker region.  Write $\sigma_1(R)$ as the union of $4^{n-1}$ many pairwise disjoint $(n-1)$-dimensional rectangles whose side lengths in all direction $e_i$, $1<i\leq n$, are either $\frac{1}{4}D=N_nd_1$ or $N_nd_1+1$. Thus in the application of Lemma~\ref{lem:1directionmultiple} we will have $k=4^{n-1}$. Next we compute $m$. For this we note that there are $2(n-1)$ many faces of $R$ whose normal vector is not $\pm e_1$, and $R$ can be adjacent to $2^{n-1}$ many other marker regions across each of these faces. Thus there are at most $2(n-1)2^{n-1}N_1$ many elements $P$ of $\mathcal{P}_1^T$, if it is already defined, where $T$ is a marker region adjacent to $R$, such that $\pi_1(P)\cap\pi_1(R)\neq\varnothing$. Let $I_1, I_2$ be subintervals of $\pi_1(R)$ whose lengths are $d_1$ such that $I_1, I_2$ are at the two ends of the interval $\pi_1(R)$. Let
$$\begin{array}{rl} \mathcal{J}_0= \{\pi_1(P)\cap \pi_1(R)\colon  & \!\!\!\!\mbox{ there is a marker region $T$ adjacent to $R$}\\
& \!\!\!\! \mbox{ whose $\mathcal{P}_1^T$ has been defined, and $P\in \mathcal{P}_1^T$}\}. 
\end{array}$$
and 
$$ \mathcal{J}=\mathcal{J}_0\cup\{I_1,I_2\}. $$
Then
$$ |\mathcal{J}|\leq 2(n-1)2^{n-1}N_1+2=(n-1)2^nN_1+2. $$
Now apply Lemma~\ref{lem:1directionmultiple} with $m=|\mathcal{J}|\leq (n-1)2^nN_1+2$, $d=d_1$, and $k=4^{n-1}$. This can be done since 
$$ D_1=4N_nd_1\geq 3N_2d_1\geq 3d_1(4^{n-1}+(n-1)2^{n+1}N_1+5)\geq 3d_1(2m+k+1). $$
We thus obtain a collection $\mathcal{P}_1^R$ with properties (i)--(vii).

For the general inductive round $1\leq i<n$, assume we have define $\mathcal{P}_1^R,\dots, \mathcal{P}_i^R$ for all marker regions $R$ so that the inductive hypotheses (i)--(vii) hold. Let $\mathcal{Q}^R_i=\mathcal{P}_1^R\cup\cdots\cup\mathcal{P}_i^R$. By the inductive hypotheses (vi), $\mathcal{Q}_i^R|\leq N_i$. We define the collection $\mathcal{R}^R$ of special packages the same way as in the proof of Theorem~\ref{thm:strongZn}. Then $$|\mathcal{R}^R|=|\mathcal{Q}_i^R|\leq N_i.$$ We also define the collection $\mathcal{S}^R$ of generalized $(n-1)$-dimensional rectangles in the same way as in the proof of Theorem~\ref{thm:strongZn}. Then $$|\mathcal{S}^R|\leq 4^{n-1}(2N_i+1)^{n-1}.$$ Finally, we define a collection $\mathcal{J}^R$ of intervals in the direction $e_i$ similar to the proof in the first round above. Then 
$$ |\mathcal{J}|\leq 2(n-1)2^{n-1}N_i+2. $$
Applying Lemma~\ref{lem:1directionmultiple} with $m=|\mathcal{J}|$, $d=d_{i+1}$, and $k=|\mathcal{S}^R|$, which is valid since
$$\begin{array}{rl} D_1=4N_nd_1\geq &\!\!\! 3N_{i+1}d_{i+1} \\
=&\!\!\! 3d_{i+1}(4^{n-1}(2N_i+1)^{n-1}+(n-1)2^{n+1}N_i+5)\geq 3d(2m+k+1), 
\end{array}$$
we obtain the collection $\mathcal{P}_{i+1}^R$ with the desired properties.

Finally, we apply Lemma~\ref{lem:1direction} to all packages identified by $\mathcal{P}^R_i$ to obtain a marker set $M$. This $M$ is clopen and has the desired properties. This finishes the proof of Theorem~\ref{thm:main}.

\section{Applications of strong marker sets\label{sec:5}}

In this section we present two applications of Theorem~\ref{thm:main}. The first application is to give a continuous proper edge $(2n+1)$-coloring of the Schreier graph on $F(2^{\mathbb{Z}^n})$. This was proved in \cite{GWW25} by a different method. It was shown in \cite{GWW25} that the continuous edge chromatic number of $F(2^{\mathbb{Z}^n})$ is exactly $2n+1$.

\begin{theorem}[\cite{GWW25}]\label{thm:coloring}
  For any integer $n\geq 1$, there is a continuous proper edge $(2n+1)$-coloring of $F(2^{\mathbb{Z}^n})$. 
\end{theorem}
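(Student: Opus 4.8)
The plan is to use the strong marker set $M\subseteq F(2^{\mathbb{Z}^n})$ furnished by Theorem~\ref{thm:main} applied to some small value of $d_0$ (say $d_0=3$), obtaining a clopen $M$ and a bound $D$ with the property that, along each of the $n$ coordinate directions, every point $x$ sees a marker point within distance $D$ both in the positive and negative $e_i$-direction. The key observation is that, because condition (2) of Theorem~\ref{thm:main} is stated separately for each direction $e_i$, the marker points encountered when travelling along the $e_i$-axis through any orbit cut that axis (viewed as a copy of $\mathbb{Z}$) into consecutive intervals of bounded length between $d_0$ and $2D$. So for each $1\leq i\leq n$ we get a clopen partition of each $e_i$-line into "blocks," and the block structure is clopen as a function of the point because $M$ is clopen and the bounds are uniform.

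First I would, for each direction $e_i$, define from $M$ a clopen set $M_i\subseteq F(2^{\mathbb{Z}^n})$ of "left endpoints" of $e_i$-blocks: $x\in M_i$ iff $x\in M$ and $x$ is a marker point whose predecessor marker point in the $e_i$-direction is at distance $\geq d_0$ — more simply, just take $M_i = M$ and use that consecutive elements of $M$ along the $e_i$-line are at distance in $[d_0, 2D]$. Then each $e_i$-line decomposes canonically into intervals $[x, x']$ where $x,x'$ are consecutive marker points along that line. Within a single such interval, which has length between $d_0\geq 3$ and $2D$, I can properly color the $e_i$-parallel edges using only the colors $\{2i-1, 2i\}$ by the following standard device: assign colors alternately $2i-1, 2i, 2i-1, 2i,\dots$ starting from the left endpoint, and because the interval has length $\geq 3$ we never create a monochromatic path at the endpoints when we glue to the neighbouring interval — we simply let the color of the last edge before a marker point and the first edge after it be allowed to coincide, since the marker point has degree $2$ in the $e_i$-line and we only need the two edges at it to differ, which a length-$\geq 2$ interval guarantees. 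Actually the cleanest formulation: within each block, 2-color the $e_i$-edges so that the two edges incident to each interior vertex differ and the two edges incident to each block-endpoint (one from this block, one from the next) also differ; this is possible with $2$ colors exactly because every block has at least $2$ edges, and the choice is clopen since it is determined locally by the block decomposition.

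This colors all $e_i$-parallel edges, over all $i$, using $2n$ colors total — colors $\{2i-1,2i\}$ for direction $e_i$. But there is a conflict: an $e_i$-edge and an $e_j$-edge ($i\neq j$) meeting at a common vertex might receive the same color, and worse, the marker-point gluing could still fail. The honest fix, and the reason a $(2n+1)$-st color is needed, is to use one extra "repair" color $2n+1$: wherever the naive scheme produces a conflict at a vertex — which, by the boundedness of the block lengths and the clopenness of $M$, happens only at a clopen set of edges near marker points — recolor the offending edge with color $2n+1$, taking care that two edges recolored to $2n+1$ are never adjacent. Ensuring the recolored edges form an independent set is the step I expect to be the main obstacle: it requires that conflicts be "sparse" along each line, which follows because conflicts only occur within distance $O(1)$ of a marker point and consecutive marker points are at distance $\geq d_0$, so by taking $d_0$ large enough (the statement lets us choose it) the recolored edges along any line are spaced out, and a short combinatorial argument — coloring greedily within each block, handling the at-most-two endpoint edges last — shows color $2n+1$ suffices for all repairs simultaneously. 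All the sets involved (the blocks, the conflict edges, the recolored edges) are clopen because $M$ is clopen and all relevant distances are bounded by $2D$, so the resulting proper edge $(2n+1)$-coloring is continuous.
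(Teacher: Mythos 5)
Your overall strategy is the one the paper uses: take the strong clopen marker set of Theorem~\ref{thm:main}, let the markers cut each $e_i$-line into blocks of bounded length, alternate two direction-specific colors inside blocks, and use the single extra color $2n+1$ as a sparse repair. However, two of your key steps have genuine gaps. First, the claim that within each block one can $2$-color the $e_i$-edges so that the two edges meeting at each block endpoint (one from this block, one from the next) also differ, ``because every block has at least $2$ edges,'' is false: inside a block the alternation is forced once the first edge's color is chosen, so the colors of the first and last edges are tied to the parity of the block length, and for odd blocks the endpoint constraint propagates a binary choice along the entire line; making that choice continuously is impossible (it would yield a continuous proper edge $2n$-coloring, since your palettes for different directions are disjoint, contradicting the fact from \cite{GWW25} that the continuous edge chromatic number is exactly $2n+1$). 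Relatedly, the conflict you then describe between an $e_i$-edge and an $e_j$-edge cannot occur under your palette; the genuine conflict is the same-direction parity failure at marker points.

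Second, the repair step is exactly where the work lies, and your argument does not close it. If the ``offending edge'' is taken at a block boundary, i.e.\ incident to a marker point, then a single marker can be offending in two different directions simultaneously, producing two adjacent edges of color $2n+1$; sparseness of repairs \emph{along each line} does not exclude this cross-direction adjacency. Moreover, recoloring one edge inside an alternating block breaks properness with its neighbors unless the alternation is flipped on one side of the repair. The paper's proof handles both issues with one concrete device: with $d_0=100$ it defines $a_i^x,b_i^x$ (distances to the markers ahead and behind along $e_i$), colors by the parity of $b_i^x$, and when $a_i^x+b_i^x$ is odd it places the color $2n+1$ exactly at the vertex with $a_i^x=10$ while switching the parity rule on the short side ($a_i^x<10$). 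Then two edges of color $2n+1$ that were adjacent would force two distinct marker points within distance about $11<100$, contradicting property (1) of Theorem~\ref{thm:main}. Your ``greedy, endpoints last'' sketch is not a clopen rule and does not substitute for such an explicit placement; adding a rule of this kind (repair at a fixed offset well inside the block, with the parity flip on one side) is what is needed to complete your argument, and with it your proof becomes essentially the paper's.
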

\begin{proof}
For $d=100$, let $D$ be the positive integer and $M$ be the clopen subset of $F(2^{\mathbb{Z}^n})$ obtained from Theorem~\ref{thm:main}.
For any point $x\in F(2^{\mathbb{Z}^n})\backslash M$ and any $1\leq i\leq n$, let $a_i^x\leq D$ be the least non-negative integer such that $a_i^xe_i\cdot x\in M$, and let $b_i^x\leq D$ be the least non-negative integer such that $-b_i^xe_i\cdot x\in M$.

We now define a continuous edge $(2n+1)$-coloring $c$ for $F(2^{\mathbb{Z}^n})$. We will use colors $1, \dots, 2n+1$.  Consider any edge $\{x,y\}$ parallel to $e_i$. For definiteness, assume $e_i\cdot x=y$.  We consider the following cases.
\begin{enumerate}
\item[Case 1:] $a_i^x+b_i^x$ is even or $a_i^x+b_i^x$ is odd but $a_i^x>10$. In this case we define 
$$c(\{x,y\})=\left\{\begin{array}{ll}i, & \mbox{ if $b_i^x$ is even, } \\ n+i, & \mbox{ if $b_i^x$ is odd;}\end{array}\right. $$
\item[Case 2:] $a_i^x+b_i^x$ is odd and $a_i^x=10$. In this case we define $c(\{x,y\})=2n+1$;
\item[Case 3:] $a_i^x+b_i^x$ is odd and $a_i^x<10$. In this case we define 
$$c(\{x,y\})=\left\{\begin{array}{ll} n+i, & \mbox{ if $b_i^x$ is even, } \\ i, & \mbox{ if $b_i^x$ is odd.} \end{array}\right. $$
\end{enumerate}
Since color $2n+1$ only occurs in places far apart and the other colors are only correlated to the direction of the edges, we conclude that $c$ is proper. 
\end{proof}

The second application is a construction of clopen tree sections in $F(2^{\mathbb{Z}^n})$. Before presenting the theorem, we recall some definitions. A {\em tree section} in $F(2^{\mathbb{Z}^n})$ is a subgraph $G$ of the Schreier graph of $F(2^{\mathbb{Z}^n})$ where each connected component of $G$ is an infinite acyclic graph. A tree section $T$ is {\em complete} if for every $x\in F(2^{\mathbb{Z}^n})$, $T\cap [x]\neq\varnothing$, and $T$ is {\em co-complete} if for every $x\in F(2^{\mathbb{Z}^n})$, $[x]\setminus T\neq\varnothing$.

A related notion is that of line sections (see \cite[Section 1.6]{GJKS23}).  A {\em line section} in $F(2^{\mathbb{Z}^n})$ is a subgraph $L$ of the Schreier graph of $F(2^{\mathbb{Z}^n})$ where every vertex in $L$ has degree $2$. It is clear that a line section is a tree section. A line section $L$ is {\em straight} if there is $1\leq i\leq n$ such that all edges in $L$ are parallel to $e_i$. By the method of proof for \cite[Theorem 1.6.1]{GJKS23}, one can show that there do not exist  Borel straight line sections in $F(2^{\mathbb{Z}^n})$ that are both complete and co-complete. However, it was shown in \cite{GJKS24} that there are Borel line sections in $F(2^{\mathbb{Z}^2})$ where each orbit is a connected component. 

For any positive integer $k$, a {\em $k$-line section} $L$ is a line section where there are exactly $k$ many connected components of $L$ in each orbit. Similiarly define $\leq k$-line sections, $k$-tree sections, and $\leq k$-tree sections.  It was shown in \cite{GJKS23} that there are no clopen $1$-line sections in $F(2^{\mathbb{Z}^2})$. This was generalized in \cite{JO25}, where the authors show that for any positive integer $k$, there are no clopen $\leq k$-tree sections in $F(2^{\mathbb{Z}^2})$. 

In the following we show that there is a clopen tree section that is both complete and co-complete. 

\begin{theorem}\label{thm:tree}
For any integer $n\geq 2$, there is a clopen tree section in $F(2^{\mathbb{Z}^n})$ that is both complete and co-complete.
\end{theorem}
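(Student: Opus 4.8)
The plan is to use Theorem~\ref{thm:main} to produce a strong clopen marker set $M$ and then to connect the marker points into a tree section by running short, canonically-chosen paths between nearby marker points. First I would apply Theorem~\ref{thm:main} with a reasonably large value of $d_0$ (say $d_0 = 100$) to obtain $D$ and a clopen $M\subseteq F(2^{\mathbb{Z}^n})$ such that distinct marker points in the same orbit are at distance $\geq d_0$, while along every axis $e_i$ from any point $x$ one reaches a marker point within $D$ steps in both the positive and negative $e_i$-direction. The key consequence is that $M$ is ``coarsely like $\mathbb{Z}^n$'': for each marker point $p$ and each direction $e_i$ there is a well-defined next marker point in that direction, namely the first element of $M$ strictly after $p$ along the $e_i$-line through $p$; call it $s_i(p)$. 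All of these choices are clopen (they depend on finitely many coordinates), and $\rho(p, s_i(p)) \le 2D$.

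The next step is to build the tree. Working in a single orbit, identified with $\mathbb{Z}^n$, the marker points $M\cap[x]$ together with the successor maps $s_1,\dots,s_n$ form a structure combinatorially resembling the standard grid graph on a ``distorted $\mathbb{Z}^n$''. I would first build a spanning tree on $M\cap[x]$ in a canonical way: e.g., connect each marker point $p$ to $s_1(p)$ (this gives a disjoint union of bi-infinite lines, one for each $e_1$-fiber of marker points), and then, for $2\le i\le n$, add the edge from $p$ to $s_i(p)$ only when $p$ is the ``$e_1,\dots,e_{i-1}$-minimal'' marker point on its fiber — precisely mimicking how one builds a spanning tree of $\mathbb{Z}^n$ from the grid graph (keep all $e_1$-edges; keep an $e_2$-edge only on the hyperplane $x_1 = $ ``origin's first coordinate''; etc.). Since there is no canonical origin in an orbit, instead of picking a basepoint one argues as in the usual construction of Borel tree-ings: use the end structure of $\mathbb{Z}^n$, or equivalently use the clopen marker regions of Lemma~\ref{lem:basicmarker} at a much larger scale to orient the construction, so that the resulting choice of kept edges is clopen and yields, in each orbit, a spanning tree of the ``marker grid''. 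Then replace each abstract edge $\{p, s_i(p)\}$ that we decided to keep by the actual $e_i$-geodesic in the Schreier graph from $p$ to $s_i(p)$. The union of all these geodesics is a clopen subgraph $T$ of the Schreier graph.

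Two things must be checked: that $T$ is a tree section and that it is co-complete. For the tree property: the geodesics we splice in have pairwise disjoint interiors provided distinct marker points are far enough apart (here the bound $d_0 = 100 \gg 2$ and the fact that an $e_i$-geodesic between consecutive $e_i$-marker points lies on a single line give that geodesics attached to distinct ``grid edges'' meet only at shared marker-point endpoints), so $T$ is a topological realization of the marker-grid spanning tree, hence each component of $T$ is an infinite acyclic graph; moreover every vertex lies within $D$ of a marker point along $e_1$, and the $e_1$-line of geodesics through each marker fiber is part of $T$, so $T$ meets every orbit, giving completeness. For co-completeness: because marker points in a common orbit are $\geq d_0$ apart and the geodesics use only axis-parallel segments, in every orbit there are points (for instance, a point that is far from $M$ in a direction not used by any incident geodesic — such points exist since within each marker region of side $\approx D_1$ the tree $T$ occupies only a lower-dimensional ``grid skeleton'') lying outside $T$; one makes this precise by noting that $T\cap[x]$ has density bounded away from $1$ while $[x]$ does not, so $[x]\setminus T\ne\varnothing$.

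The main obstacle I expect is making the spanning-tree selection \emph{clopen and consistent across the orbit} without a basepoint: the naive ``keep $e_i$-edge iff all lower coordinates are minimal'' requires an origin, and a genuine clopen construction must instead anchor the orientation in some clopen large-scale structure (nested marker regions, or a clopen ``toast''-like hierarchy) and argue that the limiting choice still produces a single spanning tree per orbit rather than, say, two half-infinite pieces or a stray cycle ``at infinity''. Handling this carefully — essentially re-deriving, in the clopen category, the standard fact that $\mathbb{Z}^n$ admits a clopen spanning tree of its grid graph relative to $F(2^{\mathbb{Z}^n})$, and then transporting it along the quasi-isometry $p\mapsto$ marker grid — is where the real work of the proof lies; the tree, completeness, and co-completeness checks are then routine given the separation constant $d_0$ from Theorem~\ref{thm:main}.
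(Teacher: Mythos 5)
Your plan has a genuine gap, and it is exactly at the point you flag as ``where the real work lies.'' You aim to produce, in each orbit, a single clopen spanning tree of the marker grid (all $e_1$-edges plus a consistently selected set of $e_2,\dots,e_n$-edges), and you hope to anchor the needed basepoint-free orientation in some clopen large-scale structure. But this intermediate goal is not merely hard --- it is impossible: a clopen subgraph with one (or even boundedly many) infinite acyclic component(s) per orbit is a clopen $\leq k$-tree section, and, as recalled in Section~\ref{sec:5} of the paper (the result of Jackson--Olsen cited there), no clopen $\leq k$-tree section of $F(2^{\mathbb{Z}^2})$ exists for any $k$. Since the theorem includes $n=2$, no amount of care with nested marker regions or a toast-like hierarchy can make your ``limiting choice'' produce a single spanning tree per orbit in the clopen category; the basepoint/orientation obstruction you identify is essential, not technical. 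Note also that the definition of tree section used here only asks that each connected component be infinite and acyclic --- it does not ask for connectivity of the markers within an orbit --- so insisting on a spanning tree is both stronger than needed and unattainable.

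The paper's proof avoids this entirely with a purely local construction. With $d_0=10$, attach to each marker point $x\in M$ a small fixed ``hook'' $H_x$ (a five-vertex path through $-e_2\cdot x$, $-(e_1+e_2)\cdot x$, $-e_1\cdot x$, $(-e_1+e_2)\cdot x$, $e_2\cdot x$), and a vertical segment $L_x$ going in the $+e_2$ direction from $e_2\cdot x$ until it first meets some hook $H_y$ (this happens within $D$ steps by property (2), and distinct hooks are at distance $\geq 8$ by property (1)). Setting $T=\bigcup_x (H_x\cup L_x)$, each $x$ has a unique ``parent'' $y$, with $\pi_2$ strictly increasing along parent chains, so every component is infinite and acyclic; completeness and co-completeness are immediate. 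The price, which is unavoidable, is that $T$ has infinitely many components in each orbit --- precisely the feature your spanning-tree plan was trying to eliminate. Your completeness and co-completeness arguments (density, separation of geodesics) would be fine if the core construction existed, but the core construction cannot be realized as stated.
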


\begin{proof} Let $d_0=10$. By Theorem~\ref{thm:main} there is $D>10$ and a clopen subset $M\subseteq F(2^{\mathbb{Z}^n})$ such that
\begin{enumerate}
\item[(1)] for any $x,y\in M$, $\rho(x,y)\geq d_0=10$;
\item[(2)] for any $x\in F(2^{\mathbb{Z}^n}) $ and any $1\leq i\leq n$, there are non-negative integers $a,b\leq D$ such that $ae_i\cdot x\in M$ and $-be_i\cdot x\in M$.
\end{enumerate}

For each $x\in M$, we let $H_x$ be a finite line graph consisting of edges linking the consecutive vertices of the sequence
$$ -e_2\cdot x, \ -(e_1+e_2)\cdot x,\  -e_1\cdot x, \ (-e_1+e_2)\cdot x,\  e_2\cdot x. $$
By (1), if $x, y\in M$, then $\rho(H_x, H_y)\geq 8$. For each $x\in M$, also let $L_x$ 
be a finite line graph consisting of edges linking the consecutive vertices of the sequence
$$ e_2\cdot x,\ 2e_2\cdot x, \dots, \ ke_2\cdot x $$
where $k$ is the least positive integer so that $ke_2\cdot x\in H_y$ for some $y\in M$. By (1), $k\geq 10$. By (2), $k\leq D$. 
Then define $T=\bigcup\{H_x\cup L_x\colon x\in M\}$. Figure~\ref{fig:tree} illustrates the construction of $T$.

\begin{figure}[h]
        \centering
        \begin{tikzpicture}[scale=0.3]
\draw(0,0)to(-1,0) to (-1,2) to (0,2);

\draw(0,2)to(0,12) to (-1, 12) to (-1,14) to (0, 14);

\draw(-1,5) to (-2,5) to (-2, 7) to (-1, 7) to (-1, 12);

\coordinate (P) at (0,1);
  \fill (P) circle (4pt);
\coordinate (Q) at (0,13);
  \fill (Q) circle (4pt);
\coordinate (R) at (-1, 6);
\fill (R) circle (4pt);
        \end{tikzpicture}
\caption{\label{fig:tree}A tree section $T$ in $F(2^{\mathbb{Z}^n})$.}
\end{figure}
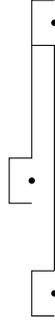

$T$ is obviously both complete and co-complete. We verify that $T$ is a tree section. For $x, y\in M$, we say that $y$ is the {\em parent} of $x$ if $L_x\cap H_y\neq \varnothing$. By our construction, every $x\in M$ has a unique parent $y\in M$, and it is clear that for the unique $g$ such that $g\cdot x=y$, $\pi_2(g)>0$ (in fact $\pi_2(g)\geq 10$). This implies that each connected component of $T$ is infinite. To see that $T$ is indeed acyclic, we consider the graph $S$ on $M$ defined by $\{x,y\}\in E(S)$ iff either $y$ is the parent of $x$ or $x$ is the parent of $y$. Then $T$ has a cycle iff $S$ has a cycle. However, if $S$ has a cycle, then there would be some $x\in M$ with two distinct parents, a contradiction. Hence $T$ is acyclic.
\end{proof}

\section{Strong markers for more general generating sets\label{sec:6}}
In this section we construct strong marker sets for more general sets of generators. In the case of dimension 2, the construction gives strong marker sets for arbitrary sets of generators.

\subsection{General packages in $\mathbb{Z}^n$}{\ }

In this subsection we develop packages for multiples of standard generators. The following lemma is a generalization of Lemma~\ref{lem:1direction}.

\begin{lemma}\label{lem:1directiongen}
Let $n, d\geq 1$ be positive integers, let $\alpha$ be a non-zero integer, and let $1\leq i\leq n$. Then there is an integer $D(n, d, \alpha)$ such that for any generalized $n$-dimensional rectangle $R$ with side length at least $D(n,d,\alpha)$ in the direction $e_i$, there is a subset $M\subseteq R$ such that
\begin{enumerate}
\item[(1)] for any distinct $x,y\in M$, $\rho(x,y)\geq d$;
\item[(2)] for any $x\in R$, there is $a\in\mathbb{Z}$ such that $x+a\alpha e_i\in M$.
\end{enumerate}
\end{lemma}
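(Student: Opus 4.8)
The plan is to reduce the statement to Lemma~\ref{lem:1direction} by a rescaling/partitioning argument in the direction $e_i$. Write $\alpha' = |\alpha|$; since the conclusion is symmetric under replacing $\alpha$ by $-\alpha$ (the set of achievable steps $a\alpha e_i$ is the same), we may assume $\alpha = \alpha' > 0$. Without loss of generality take $i = 1$ and $R = [0, c_1]\times[0,b_2]\times\cdots\times[0,b_n]$ with $c_1$ at least the bound $D(n,d,\alpha)$ to be chosen. The key observation is that the residue class of $x(1)$ modulo $\alpha$ is invariant under the operation $x \mapsto x + a\alpha e_1$; so condition (2) forces $M$ to meet, within each ``fiber'' of fixed first-coordinate residue, every line in the direction $e_1$. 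The natural move is therefore to handle the $\alpha$ residue classes separately: for each $r \in \{0,1,\dots,\alpha-1\}$, the set $R_r = \{x \in R : x(1) \equiv r \pmod \alpha\}$ is, after the affine change of coordinates $x(1) \mapsto (x(1) - r)/\alpha$ in the first coordinate, a generalized $n$-dimensional rectangle whose side length in direction $e_1$ is roughly $c_1/\alpha$.

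The main steps, in order: (i) Fix $r$. Apply Lemma~\ref{lem:1direction} with parameter $d' := \alpha d$ in place of $d$ (and the same $n$) to the rescaled copy of $R_r$, provided its $e_1$-side length is at least $2(d')^n - d' = 2(\alpha d)^n - \alpha d$; this requires $c_1/\alpha \gtrsim 2(\alpha d)^n$, i.e. $c_1 \gtrsim 2\alpha^{n+1}d^n$. Pull the resulting marker set back through the affine map to get $M_r \subseteq R_r$ with: any two distinct points of $M_r$ at $\rho$-distance $\geq d$ in coordinates $2,\dots,n$ as in Lemma~\ref{lem:1direction}, and in the first coordinate at distance $\geq \alpha d' \,/\,$ — wait, more carefully: two points of $M_r$ differing in the first coordinate differ by a multiple of $\alpha$ of size $\geq d' = \alpha d$, hence by $\geq \alpha d$, while the "within a fiber" spacing in the other coordinates is $\geq d' = \alpha d \geq d$. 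In either case $\rho \geq d$ for distinct points of $M_r$, and (2) holds within $R_r$ with steps that are multiples of $\alpha$. (ii) Spread the $M_r$ apart across residue classes: since distinct $r, r'$ give $|x(1) - y(1)| \geq 1$ only, we need a separation trick. Shift $M_r$ by $2(r \bmod d)\,\alpha d^{n-1} e_1$ — mimicking the interleaving in the proof of Lemma~\ref{lem:1direction} — no; cleaner: enlarge $d'$ further and translate $M_r$ along $e_1$ within its residue class by a residue-dependent offset $t_r \alpha$, $0 \le t_r \le d-1$, chosen so that the first coordinates of points in $M_r$ and $M_{r'}$ always differ by $\geq d$ whenever $r \ne r'$; this costs another factor of about $d$ in the side-length requirement. (iii) Set $M = \bigcup_{r} M_r$ and $D(n,d,\alpha)$ equal to the resulting explicit bound (on the order of $2\alpha^{n+1}d^{n+1}$, say), after padding so every $R_r$ is large enough even accounting for the $t_r$ offsets and the rounding in $c_1/\alpha$.

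The main obstacle is step (ii): guaranteeing $\rho(x,y) \geq d$ for $x \in M_r$, $y \in M_{r'}$ with $r \ne r'$, since these points can be forced to have nearly equal first coordinates and there is no a priori control in the other coordinates once we work residue class by residue class. The fix is to exploit the $e_1$-direction: within residue class $r$ we translate $M_r$ by $t_r$ "steps" (each step being $\alpha e_1$), and we arrange $t_r \alpha + r$ to occupy well-separated slots mod $\alpha d$ as $r$ ranges over $\{0,\dots,\alpha-1\}$ — concretely, since the marker points of $M_r$ already have first coordinates lying in a single residue class mod $\alpha d^{?}$, a suitable choice of $t_r$ makes the first coordinates of $M_r$ and $M_{r'}$ differ by $\geq d$; this is a finite scheduling problem solvable because we have $d^n$-sized room to play with. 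Everything else is routine bookkeeping of side-length bounds, and the final $D(n,d,\alpha)$ is read off as a polynomial in $d$ and $\alpha$ with exponents depending on $n$.
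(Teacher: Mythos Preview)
Your overall strategy---partition $R$ by the residue of the $e_1$-coordinate modulo $|\alpha|$, apply Lemma~\ref{lem:1direction} to each rescaled slice, then recombine---can be made to work, but step~(ii) as written does not. The shifts $t_r\alpha$ with $0\le t_r\le d-1$ are far too small: each $M_r$ already occupies an $e_1$-interval of length on the order of $\alpha\cdot 2(d')^n$, so shifts bounded by $(d-1)\alpha$ cannot make the $M_r$ pairwise $d$-separated in the first coordinate. Your fallback ``slots mod $\alpha d$'' argument rests on the premise that the first coordinates of each $M_r$ lie in a single residue class modulo some fixed modulus; a glance at the inductive construction in Lemma~\ref{lem:1direction} (the shifts by $2(t\bmod d)d^{n-1}$ there already produce many residues) shows this premise is false. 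The repair is immediate but not the one you propose: simply stack the $M_r$ in pairwise disjoint $e_1$-intervals with gaps $\ge d$, taking $t_r$ of size roughly $r$ times the width of a single $M_r$. This costs a full factor of $|\alpha|$ in $D(n,d,\alpha)$, not merely a factor of $d$. (Incidentally, $d'=d$ already suffices in step~(i): the rescaling only stretches the first coordinate, so $\rho$-distances do not shrink under pullback.)

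For comparison, the paper's argument is both shorter and avoids the residue-class partition altogether. It applies Lemma~\ref{lem:1direction} \emph{once}, with the original parameter $d$, to a subrectangle of $e_1$-width $2d^n-d$, obtaining a single set $M_0$; then it sets $M=\bigcup_{0\le t\le|\alpha|-1}\bigl(M_0+t(2d^n+c)e_1\bigr)$, where $c\in[0,|\alpha|-1]$ is chosen so that $2d^n+c\equiv 1\pmod{|\alpha|}$. The step $2d^n+c$ exceeds the $e_1$-width of $M_0$ by more than $d$, so the translates are automatically $d$-separated; and since the step is $\equiv 1\pmod{|\alpha|}$, the shifts $t(2d^n+c)$ run through all residues modulo $|\alpha|$, so every $x\in R$ reaches some translate by a multiple of $\alpha e_1$. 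This yields the explicit value $D(n,d,\alpha)=2|\alpha|d^n-d+(|\alpha|-1)^2$, smaller than what your route (even with $d'=d$ and the stacking fix) would produce.
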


\begin{proof} Let $D(n,d,\alpha)=2|\alpha|d^n-d+(|\alpha|-1)^2$. Without loss of generality, we may assume that $i=1$ and 
$$R=\left[0,D(n,d,\alpha)-1\right]\times Q$$ 
for some generalized $(n-1)$-dimensional rectangle $Q$. Let $c\in [0, |\alpha|-1]$ be the unique integer such that 
$$ 2d^n+c\equiv 1\ (\!\!\!\!\!\!\mod |\alpha|). $$
Let 
$$ R_0=[0, 2d^n-d-1]\times Q. $$
By Lemma~\ref{lem:1direction}, there is a subset $M_0\subseteq R_0$ such that 
\begin{enumerate}
\item[(i)] for any distinct $x, y\in M_0$, $\rho(x,y)\geq d$;
\item[(ii)] for any $x\in R$, there is $a_x\in \mathbb{Z}$ such that $x+a_xe_i\in M_0$.
\end{enumerate}
Now define
$$ M=\bigcup_{0\leq t\leq |\alpha|-1} M_0+t(2d^n+c)e_1. $$
Then $M$ satisfies (1). For (2), note that for $0\leq t\leq |\alpha|-1$, 
$$ t(2d^n+c)\equiv t\ (\!\!\!\!\!\!\mod |\alpha|). $$
Thus, there is some $0\leq t\leq |\alpha|-1$ such that $a_x+t(2d^n+c)\equiv 0\ (\!\!\!\!\mod |\alpha|)$. Hence there is $a\in \mathbb{Z}$ such that $x+a\alpha e_i\in M$. To see that $M\subseteq R$, note that
$$\begin{array}{rl} \pi_1(M)\!\!\!\!&\subseteq [0, 2d^n-d-1+(|\alpha|-1)(2d^n+c)]\\
&\subseteq [0, 2d^n-d-1+(|\alpha|-1)(2d^n+|\alpha|-1)]=[0, D(n,d,\alpha)-1]. 
\end{array}$$
\end{proof}

\begin{lemma}\label{lem:1directiongenmultiple}
Let $n, d\geq 1$ be positive integers, let $\alpha_1, \dots, \alpha_m$ be non-zero integers, and let $1\leq i\leq n$. Then there is an integer $D(n, d, \alpha_1,\dots, \alpha_m)$ such that for any generalized $n$-dimensional rectangle $R$ with side length at least $D(n,d,\alpha_1, \dots, \alpha_m)$ in the direction $e_i$, there is a subset $M\subseteq R$ such that
\begin{enumerate}
\item[(1)] for any distinct $x,y\in M$, $\rho(x,y)\geq d$;
\item[(2)] for any $x\in R$ and any $1\leq j\leq m$, there is $a\in\mathbb{Z}$ such that $x+a\alpha_j e_i\in M$.
\end{enumerate}
\end{lemma}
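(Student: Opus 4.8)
The plan is to build $M$ by a repeated application of Lemma~\ref{lem:1directiongen}, one multiple $\alpha_j$ at a time, where at each stage we treat the marker set already constructed as an ``obstacle" that the next stage must avoid by the $\rho(x,y)\geq d$ constraint. Concretely, I would run an induction on $m$. For $m=1$ the result is exactly Lemma~\ref{lem:1directiongen} with $D(n,d,\alpha_1)=D(n,d,\alpha_1)$. For the inductive step, suppose we already have an integer $D(n,d,\alpha_1,\dots,\alpha_{m-1})$ doing the job for $m-1$ multiples. I would slice the rectangle $R$ in the direction $e_i$ into consecutive blocks: a first block $R'$ of side length $D(n,d,\alpha_1,\dots,\alpha_{m-1})$ in the direction $e_i$ on which we invoke the inductive hypothesis to get $M'\subseteq R'$ handling $\alpha_1,\dots,\alpha_{m-1}$, and then, further along, enough room to add points for $\alpha_m$.

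The subtlety is that a single block for $\alpha_m$ does not suffice: from a given $x\in R$ we need some $x+a\alpha_m e_i\in M$, and $x$ could be anywhere along the $e_i$-axis, so the $\alpha_m$-points must be spread out with period a multiple of $|\alpha_m|$ \emph{relative to the residue of $x$}, exactly as in the proof of Lemma~\ref{lem:1directiongen}. So instead I would take the approach of applying Lemma~\ref{lem:1directiongen} directly with parameter $\alpha_m$ and a carefully enlarged spacing $d'=d$, but on a sub-rectangle placed so that its whole extent is at $\rho$-distance $\geq d$ from $M'$, and then periodize. The cleanest route: let $R_0\subseteq R$ be the initial segment of side length $D(n,d,\alpha_1,\dots,\alpha_{m-1})$ in direction $e_i$, get $M'$ on it. Let $R_1\subseteq R$ be a later segment of side length $D(n,d,\alpha_m)$, separated from $R_0$ by a gap of length $\geq d$, and apply Lemma~\ref{lem:1directiongen} to get $M''\subseteq R_1$ handling $\alpha_m$. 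Then $M=M'\cup M''$ satisfies (1) (the two pieces are $d$-separated, and each is internally $d$-separated), and (2) holds because every $x\in R$ reaches $M'$ via some $a\alpha_j e_i$ for $j<m$ and reaches $M''$ via some $a\alpha_m e_i$ — here we use that $R_1$'s construction in Lemma~\ref{lem:1directiongen} guarantees reachability from \emph{all} $x\in R$, not merely from $x\in R_1$. Summing the side lengths and the gap gives
$$ D(n,d,\alpha_1,\dots,\alpha_m)=D(n,d,\alpha_1,\dots,\alpha_{m-1})+d+D(n,d,\alpha_m). $$

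The main obstacle I anticipate is the reachability clause (2) for the inductive piece: when I place $M''$ on a shifted sub-rectangle $R_1$ rather than on all of $R$, I must make sure Lemma~\ref{lem:1directiongen} still yields a point $x+a\alpha_m e_i\in M''$ for every $x\in R$, including those far from $R_1$. This works precisely because in Lemma~\ref{lem:1directiongen} the output $M$ is periodic-in-residue along $e_i$ with period dividing a multiple of $|\alpha_m|$ and spans a full $D(n,d,\alpha_m)$-window, so by translating an arbitrary $x$ into that window along $\alpha_m e_i$ we land in $M''$; one just has to check the translation amount is a genuine integer multiple of $\alpha_m$, which it is by the residue bookkeeping $2d^n+c\equiv 1\pmod{|\alpha_m|}$ already used in Lemma~\ref{lem:1directiongen}. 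A secondary point is verifying that $R$'s side length in directions other than $e_i$ plays no role — all the constructions use $R=I\times Q$ with the action confined to the $e_i$-coordinate — so the bound $D(n,d,\alpha_1,\dots,\alpha_m)$ genuinely depends only on the side length in direction $e_i$, as claimed.
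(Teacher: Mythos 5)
Your proposal is correct and is essentially the paper's own argument: the paper partitions $\pi_i(R)$ directly into $m$ blocks of lengths $D(n,d,\alpha_j)$ separated by gaps of length $d-2$ and applies Lemma~\ref{lem:1directiongen} to each, which is exactly what your induction on $m$ produces when unrolled, with your recursion giving the same constant $D(n,d,\alpha_1,\dots,\alpha_m)=(m-1)d+\sum_j D(n,d,\alpha_j)$ up to the harmless replacement of the gap $d-2$ by $d$. The reachability point you flag (extending clause (2) from the sub-block to all of $R$ by translating $x$ along $\alpha_j e_i$ into the block, which is possible since $D(n,d,\alpha_j)\geq|\alpha_j|$ so every residue class appears) is the same observation the paper leaves implicit, and your handling of it is sound.
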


\begin{proof} For any non-zero integer $\alpha$, let $D(n,d,\alpha)$ be the number given by Lemma~\ref{lem:1directiongen}. Let
$$ D(n,d,\alpha_1,\dots, \alpha_m)=(m-1)d+\displaystyle\sum_{j=1}^m D(n,d,\alpha_j). $$
Without loss of generality, assume $R$ is a generalized $n$-dimensional rectangle with $D(n,d,\alpha_1,\dots, \alpha_m)$ as the length of $\pi_i(R)$. Write $\pi_i(R)$ into consecutive disjoint intervals
$$ \pi_i(R)=I_1\cup J_1\cup I_2\cup J_2\cup\cdots\cup J_{m-1}\cup I_m, $$
where the length of each $J_1, \dots, J_{m-1}$ is $d-2$ (in the case $d=1$, each $J_1,\dots, J_{m-1}$ is empty) and for any $1\leq j\leq m$, the length of $I_j$ is exactly $D(n,d,\alpha_j)$. Thus we obtain disjoint subrectangles of $R$
$$ R_1,\dots, R_m $$
with $\rho(R_j, R_k)\geq d$ for any $1\leq j\neq k\leq m$, and for $1\leq j\leq m$, the length of $\pi_i(R_j)$ is exactly $D(n,d,\alpha_j)$. Now (1) and (2) follow from Lemma~\ref{lem:1directiongen}.
\end{proof}

Lemma~\ref{lem:1directiongenmultiple} can be used in conjunction with Lemma~\ref{lem:1directionmultiple} to give proofs of the following analogs of Theorems~\ref{thm:strongZn} and \ref{thm:main}.

\begin{theorem}\label{thm:strongZngen} Let $n, d_0, m_1, m_2, \dots, m_n\geq 1$ be positive integers. Let 
$$\alpha_{1, 1}, \dots, \alpha_{1, m_1}, \alpha_{2,1},\dots, \alpha_{2,m_2}, \dots, \alpha_{n,1},\dots, \alpha_{n, m_n}$$ be non-zero integers. Then there is a positive integer $D_0$ such that for any $n$-dimensional rectangle $R$ with side length at least $D_0$ in each direction, there is a subset $M\subseteq R$ such that
\begin{enumerate}
\item[(1)] for any distinct $x,y\in M$, $\rho(x,y)\geq d_0$;
\item[(2)] for any $1\leq i\leq n$, $1\leq j\leq m_i$ and $x\in R$, there is $a_{i,j}\in\mathbb{Z}$ such that $x+a_{i,j}\alpha_{i,j}e_i\in M$.
\end{enumerate}
\end{theorem}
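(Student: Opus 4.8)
The plan is to mimic the proof of Theorem~\ref{thm:strongZn} essentially verbatim, with the single modification that in the final step, instead of applying Lemma~\ref{lem:1direction} to each package $P\in\mathcal{P}_i$ in the direction $e_i$, we apply Lemma~\ref{lem:1directiongenmultiple} with the list of coefficients $\alpha_{i,1},\dots,\alpha_{i,m_i}$. The bookkeeping in the body of the construction — the collections $\mathcal{P}_i$, the decreasing sequence $d_1>d_2>\cdots>d_n$, the counts $N_i$, the packaging-and-spacing via Lemmas~\ref{lem:dim1packagingandspacing}, \ref{lem:1directionmultiple}, and \ref{lem:division} — is unchanged; only the terminal width requirement on the packages and the base case of the inner induction change.

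Concretely, I would first record the constant $c_0 := \max\{D(n, d_0, \alpha_{i,1},\dots,\alpha_{i,m_i}) : 1\leq i\leq n\}$ coming from Lemma~\ref{lem:1directiongenmultiple}, and then run the induction of Theorem~\ref{thm:strongZn} with the single change that in inductive hypothesis (ii) the lower bound on the length of $\pi_i(P)$ for $P\in\mathcal{P}_i$ is raised from $2d_0^n-d_0$ to $c_0$ (and correspondingly the lower bound $d_n > 2d_0^n - d_0$ is replaced by $d_n > c_0$; e.g.\ take $d_n = 6n c_0$ and $d_i = 6n d_{i+1}$, so $d_1 = (6n)^n c_0$, and $D_0 = 4N_n d_1$ as before). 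Since $c_0$ is a fixed constant depending only on $n, d_0$ and the $\alpha_{i,j}$, and since every place in the proof of Theorem~\ref{thm:strongZn} where $2d_0^n-d_0$ appeared was only used as ``some constant bounded above by $d_{i+1}$,'' the identical estimates go through: in particular the key inequality $w_{i+1}\geq D_0 = 4N_n d_1 \geq 3 N_{i+1} d_{i+1} \geq 3d(2m+k+1)$ validating each application of Lemma~\ref{lem:1directionmultiple} is unaffected.

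Finally, granting the sequences $\mathcal{P}_1,\dots,\mathcal{P}_n$ with properties (i)--(iv) (and (v),(vi)), for each $1\leq i\leq n$ and each $P\in\mathcal{P}_i$ I apply Lemma~\ref{lem:1directiongenmultiple} to $P$ in the direction $e_i$ with coefficients $\alpha_{i,1},\dots,\alpha_{i,m_i}$ — valid because $\pi_i(P)$ has length at least $c_0 \geq D(n,d_0,\alpha_{i,1},\dots,\alpha_{i,m_i})$ — to obtain $M_P\subseteq P$ satisfying: any two distinct points of $M_P$ are $\geq d_0$ apart, and for every $x\in P$ and every $1\leq j\leq m_i$ there is $a\in\mathbb{Z}$ with $x+a\alpha_{i,j}e_i\in M_P$. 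Set $M = \bigcup\{M_P : P\in\mathcal{P}_i,\ 1\leq i\leq n\}$. Property (2) of the theorem is immediate from (i) together with the covering property of each $M_P$: given $x\in R$, $1\leq i\leq n$, $1\leq j\leq m_i$, pick $P\in\mathcal{P}_i$ with $\sigma_i(x)\in\sigma_i(P)$, let $x'$ be the point of $P$ with the same $e_i$-line as $x$, and apply the covering property of $M_P$ to $x'$. Property (1) follows exactly as in Theorem~\ref{thm:strongZn}: two marker points in the same $M_P$ are handled by Lemma~\ref{lem:1directiongenmultiple}(1), while two marker points in distinct packages $P,Q$ are $\geq \min(d_i,d_j) \geq d_n > c_0 \geq d_0$ apart by (iii)/(iv) applied to $\rho(P,Q)$.

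I do not expect a genuine obstacle here: the whole point is that the proof of Theorem~\ref{thm:strongZn} was written so that the constant $2d_0^n-d_0$ is a black box, and Lemma~\ref{lem:1directiongenmultiple} supplies a suitable replacement black box. The only mildly delicate point is purely notational — tracking that the list of coefficients attached to direction $e_i$ is applied to packages in $\mathcal{P}_i$ and nowhere else — and confirming that $c_0$, being a finite maximum of finitely many constants, can be absorbed into the choice of $d_n$ without disturbing any of the counting inequalities.
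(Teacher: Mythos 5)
Your proposal takes exactly the route the paper intends: the paper offers no detailed proof of this theorem, stating only that Lemma~\ref{lem:1directiongenmultiple} combined with the packaging-and-spacing machinery (Lemma~\ref{lem:1directionmultiple}) yields the analog of Theorem~\ref{thm:strongZn}, and your plan --- rerun that proof with the width threshold $2d_0^n-d_0$ replaced by $c_0=\max_i D(n,d_0,\alpha_{i,1},\dots,\alpha_{i,m_i})$, absorb $c_0$ into the choice of $d_n$, and apply Lemma~\ref{lem:1directiongenmultiple} to each package $P\in\mathcal{P}_i$ in direction $e_i$ at the end --- is precisely that, and the counting inequalities indeed go through unchanged.

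One step in your verification of (2) needs tightening. You take $x\in R$, choose $P\in\mathcal{P}_i$ with $\sigma_i(x)\in\sigma_i(P)$, pick $x'\in P$ on the same $e_i$-line, and ``apply the covering property of $M_P$ to $x'$.'' That yields $a$ with $x'+a\alpha_{i,j}e_i\in M_P$, which is not yet what (2) asks for: $x$ and $x'$ differ by $be_i$ where $b$ need not be a multiple of $\alpha_{i,j}$, and Lemma~\ref{lem:1directiongenmultiple}(2) is stated only for points \emph{inside} the package, so the residue class modulo $\alpha_{i,j}$ matters here (this residue bookkeeping is the entire content of Lemma~\ref{lem:1directiongen}, so it should not be glossed over). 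The fix is immediate, in either of two ways: choose $x'\in P$ on the line with $\pi_i(x')\equiv\pi_i(x)\pmod{|\alpha_{i,j}|}$, which is possible because $\pi_i(P)$ has length at least $c_0\geq D(n,d_0,\alpha_{i,j})\geq|\alpha_{i,j}|$; or note that the construction in Lemma~\ref{lem:1directiongen} (the shifts $t(2d^n+c)e_1$, $0\leq t\leq|\alpha|-1$, with $2d^n+c\equiv 1\pmod{|\alpha|}$) actually places marker points in every residue class modulo $|\alpha|$ on every $e_i$-line meeting the package, so its conclusion (2) holds for all $x$ with $\sigma_i(x)\in\sigma_i(P)$, not just $x\in P$. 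With that repair, your argument is complete and coincides with the paper's intended proof.
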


\begin{theorem}\label{thm:strongZngen} Let $n, d_0, m_1, m_2, \dots, m_n\geq 1$ be positive integers. Let 
$$\alpha_{1, 1}, \dots, \alpha_{1, m_1}, \alpha_{2,1},\dots, \alpha_{2,m_2}, \dots, \alpha_{n,1},\dots, \alpha_{n, m_n}$$ be non-zero integers. Then there is a positive integer $D_1$ and a clopen subset $M\subseteq F(2^{\mathbb{Z}^n})$ such that 
\begin{enumerate}
\item[(1)] for any distinct $x,y\in M$ in the same orbit, $\rho(x,y)\geq d_0$;
\item[(2)] for any $1\leq i\leq n$, $1\leq j\leq m_i$ and $x\in F(2^{\mathbb{Z}^n})$, there are non-negative integers $a_{i,j}, b_{i,j}\leq D_1$ such that $a_{i,j}\alpha_{i,j}e_i\cdot x\in M$ and $-b_{i,j}\alpha_{i,j}e_i\cdot x\in M$.
\end{enumerate}
\end{theorem}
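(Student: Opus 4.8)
The plan is to rerun the proof of Theorem~\ref{thm:main} with only two modifications. Set $$D^{\ast}=\max_{1\le i\le n}D\bigl(n,d_0,\alpha_{i,1},\dots,\alpha_{i,m_i}\bigr),$$ the largest of the constants furnished by Lemma~\ref{lem:1directiongenmultiple}. The first modification is to choose the decreasing sequence $d_1>d_2>\cdots>d_n$ (and hence the integers $N_i$ and the marker-region side-length parameter $4N_nd_1$) so that its least term already exceeds $D^{\ast}$; for instance $d_n=6nD^{\ast}$ and $d_i=6nd_{i+1}$ will do. Because $D^{\ast}\ge 2d_0^{\,n}-d_0$ (every $D(n,d,\alpha)$ from Lemma~\ref{lem:1directiongen} is at least $2d^n-d$), the quantity $D^{\ast}$ dominates the old lower bound, and therefore the entire combinatorial construction of the package collections $\mathcal{P}_1^R,\dots,\mathcal{P}_n^R$ goes through word for word once every occurrence of ``$2d_0^{\,n}-d_0$'' is replaced by ``$D^{\ast}$'' (in the inductive hypothesis (ii), and in the clauses (b),(d) inherited from the proof of Theorem~\ref{thm:strongZn}). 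The invocations of Lemmas~\ref{lem:dim1packagingandspacing}, \ref{lem:1directionmultiple} and \ref{lem:division} are unaffected, because the bounds on $|\mathcal{P}_i^R|$, $|\mathcal{S}^R|$ and $|\mathcal{J}^R|$ involve neither $d_0$ nor the scalars, and $4N_nd_1$ still dominates $3N_{i+1}d_{i+1}$. We thus obtain, for every marker region $R$, collections $\mathcal{P}_i^R$ of pairwise disjoint generalized $n$-dimensional subrectangles of $R$ satisfying (i)--(vii), with (ii) now asserting that $\pi_i(P)$ has length between $D^{\ast}$ and $d_i$.

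The second modification is in the final step: in place of Lemma~\ref{lem:1direction}, apply Lemma~\ref{lem:1directiongenmultiple} to each $P\in\mathcal{P}_i^R$ in the direction $e_i$, with spacing $d_0$ and with the scalars $\alpha_{i,1},\dots,\alpha_{i,m_i}$. This is legitimate, since the side length of $P$ in the direction $e_i$ is at least $D^{\ast}\ge D(n,d_0,\alpha_{i,1},\dots,\alpha_{i,m_i})$. It produces $M_P\subseteq P$ with $\rho(u,v)\ge d_0$ for distinct $u,v\in M_P$ and such that for every $y\in P$ and every $1\le j\le m_i$ there is $a\in\mathbb{Z}$ with $y+a\alpha_{i,j}e_i\in M_P$. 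Put $M=\bigcup\{M_P:P\in\mathcal{P}_i^R,\ 1\le i\le n,\ R\text{ a marker region}\}$. This $M$ is clopen for exactly the reasons it was in the proof of Theorem~\ref{thm:main}, since the choice made inside a marker region is a fixed finitary rule depending only on that region and on the finitely many regions adjacent to it. Property~(1) is immediate: distinct points of the same $M_P$ are at distance $\ge d_0$ by Lemma~\ref{lem:1directiongenmultiple}(1), while points lying in distinct packages are at distance $\ge d_n>D^{\ast}\ge d_0$ by the inductive hypotheses (iii),(iv),(vii).

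For property~(2), fix $x\in F(2^{\mathbb{Z}^n})$, $1\le i\le n$ and $1\le j\le m_i$; write $\alpha=\alpha_{i,j}$ and let $\ell$ be the line through $x$ in the direction $e_i$. The one new ingredient, absent from the $\alpha=1$ situation of Theorem~\ref{thm:main}, is the observation that Lemma~\ref{lem:1directiongenmultiple}(2) forces each $M_P$ (with $P\in\mathcal{P}_i^R$) to meet \emph{every} residue class modulo $\alpha$ along each $e_i$-fiber of $P$: such a fiber has side length at least $D^{\ast}\ge|\alpha|$, hence contains a full system of residues, and for each residue the point $y+a\alpha e_i$ supplied by the lemma lies on the same fiber and in the same class. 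Now let $R'$ be the marker region that $\ell$ enters immediately on the $+e_i$ side of the marker region containing $x$. By inductive hypothesis (i) there is $P'\in\mathcal{P}_i^{R'}$ with $\sigma_i(x)\in\sigma_i(P')$, so the fiber $\ell\cap P'$ is nonempty, is contained in $R'$, and, by the observation, contains a point $z=ce_i\cdot x\in M$ with $c\equiv 0\pmod{\alpha}$. Since $R'$ lies entirely on the $+e_i$ side of the marker region of $x$ we have $c>0$, and $|c|$ is at most a fixed multiple of $N_nd_1$ (the combined side length of two consecutive marker regions along $\ell$); writing $c=t\alpha$ we obtain, according to the sign of $\alpha$, a non-negative value $a_{i,j}=|t|$ with $a_{i,j}\alpha e_i\cdot x\in M$ or a non-negative value $b_{i,j}=|t|$ with $-b_{i,j}\alpha e_i\cdot x\in M$. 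Repeating the argument with the marker region that $\ell$ enters on the $-e_i$ side of $x$'s region supplies the companion point on the other side. Taking the integer $D_1$ of the statement to be any sufficiently large multiple of $N_nd_1$ --- say $12N_nd_1$ --- bounds all the $|t|$ that arise, which yields (2).

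The residue-class observation above is the sole conceptual novelty; everything else is bookkeeping. I expect the only thing requiring care to be the propagation of the enlarged lower bound $D^{\ast}$ through the recursions defining $d_i$ and $N_i$, so that all the side-length inequalities needed for Lemmas~\ref{lem:1directionmultiple} and \ref{lem:1directiongenmultiple} remain valid --- but this is entirely analogous to the estimates already carried out in Sections~\ref{sec:3} and \ref{sec:4}.
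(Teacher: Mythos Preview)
Your proposal is correct and follows exactly the approach the paper indicates: the paper's own ``proof'' is the single sentence that Lemma~\ref{lem:1directiongenmultiple} can be used in conjunction with Lemma~\ref{lem:1directionmultiple} to give proofs analogous to those of Theorems~\ref{thm:strongZn} and~\ref{thm:main}, and your write-up is a faithful and correct expansion of that remark. The residue-class observation you single out for property~(2) is precisely the extra point one needs beyond the $\alpha=1$ case, and your bound via the adjacent marker region is the intended one (note that $\sigma_i(x)\in\sigma_i(R')$ holds simply because the line $\ell$ passes through $R'$).
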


\subsection{Parallelopipeds as packages in $\mathbb{Z}^n$}{\ }

In this subsection we develop packages for generators whose coordinates are non-zero. For such generators, we will use packages that are generalized parallelopipeds. These are best described in the context of $\mathbb{R}^n$. We need to make some definitions. Let $v=(\nu_1,\dots, \nu_n)\in \mathbb{R}^n$ be a vector with $\nu_i\neq0$ for all $1\leq i\leq n$. Fix an $1\leq i\leq n$. Let $B$ be a generalized $n$-dimensional rectangle in $\mathbb{R}^n$ with $\pi_i(B)$ a singleton. Let $h>0$ be a real number. Then the {\em generalized parallelopiped} in $\mathbb{R}^n$ with {\em base} $B$, {\em direction $v$} and {\em $e_i$-height} $h$  is the set 
$$ P(B, v, h)=\bigcup_{0\leq t\leq h/|\nu_i|} B+tv. $$
We also consider the following {\em infinite generalized parallelopiped} in $\mathbb{R}^n$ with {\em base} $B$ and {\em direction} $v$
$$ P(B, v)=B+\mathbb{R}v. $$
Now let $S$ be a generalized $n$-dimensional rectangle in $\mathbb{Z}^n$ with $\pi_i(S)$ a singleton. Let $\mbox{Hull}(S)$ be the convex hull of $S$ in $\mathbb{R}^n$. If $v=(\nu_1,\dots, \nu_n)\in\mathbb{Z}^n$ is a vector with $\nu_i\neq 0$ for all $1\leq i\leq n$, and $h$ is a positive integer, then we let
$$ L(S, v, h)=P(\mbox{Hull}(S), v, h)\cap \mathbb{Z}^n $$
and call it the {\em generalized parallelopiped} in $\mathbb{Z}^n$ with {\em base} $S$, {\em direction $v$}, and {\em $e_i$-height} $h$. The {\em infinite generalized parallelopiped} with {\em base} $S$ and {\em direction} $v$ is 
$$ L(S, v)=P(\mbox{Hull}(S), v)\cap \mathbb{Z}^n. $$

\begin{lemma}\label{lem:1directionslanted}
Let $n, d\geq 1$ be positive integers, let $v=(\nu_1,\dots, \nu_n)\in\mathbb{Z}^n$ have only non-zero coordinates, let $S$ be a generalized $n$-dimensional rectangle with $\pi_i(S)$ a singleton, and let $1\leq i\leq n$. Then there is an integer $H(n, d, v, i)$ such that for any generalized parallelopiped $R$ with base $S$, direction $v$ and $e_i$-height at least $H(n,d,v,i)$, there is a subset $M\subseteq R$ such that
\begin{enumerate}
\item[(1)] for any distinct $x,y\in M$, $\rho(x,y)\geq d$;
\item[(2)] for any $x\in L(S, v)$, there is $a\in\mathbb{Z}$ such that $x+av\in M$.
\end{enumerate}
\end{lemma}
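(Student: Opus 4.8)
The plan is to reduce the statement to Lemma~\ref{lem:1direction}, using the same device by which Lemma~\ref{lem:1directiongen} handled the direction $\alpha e_i$: cut $L(S,v)$ into finitely many ``slabs'' on each of which the slanted direction $v$ becomes the unit axis direction $e_i$, apply Lemma~\ref{lem:1direction} to each slab, and glue them together. The point to watch is that a single shear of $\mathbb{Z}^n$ does \emph{not} straighten $L(S,v)$ into a box, because the integer points of $L(S,v)$ in the hyperplanes $\{x_i=\ell\}$ fill slices whose sizes take two slightly different values as $\ell$ varies. The correct cut is by the residue of $x_i$ modulo $|\nu_i|$.

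Concretely, I would relabel coordinates so that $i=n$, translate so that $\mathrm{Hull}(S)=\prod_{k<n}[a_k,b_k]\times\{0\}$, and (mirroring if needed) assume $\nu_n>0$; set $V=\|v\|=\max_k|\nu_k|$. Let $\Lambda_0=\mathbb{Z}v+\sum_{k<n}\mathbb{Z}e_k=\{x\in\mathbb{Z}^n:\nu_n\mid x_n\}$, so that $\mathbb{Z}^n=\bigsqcup_{r=0}^{\nu_n-1}(\Lambda_0+re_n)$. On the coset $x_n\equiv r$ introduce coordinates $(j,m)$ with $j=(j_k)_{k<n}$ by $x_n=\nu_n m+r$, $j_k=x_k-\nu_k m$. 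A short computation gives: $x\in L(S,v)$ iff $j_k\in I_{r,k}:=\mathbb{Z}\cap[\,a_k+r\nu_k/\nu_n,\ b_k+r\nu_k/\nu_n\,]$ for every $k<n$; each $I_{r,k}$ is an interval of length $b_k-a_k$ or $b_k-a_k-1$; and $x\mapsto x+v$ fixes $j$ and increases $m$ by $1$. Thus, in the $(j,m)$-coordinates, the part of $L(S,v)$ in the coset is exactly the generalized $n$-dimensional rectangle $I_r\times\mathbb{Z}$, where $I_r=\prod_{k<n}I_{r,k}$ (empty precisely when some $b_k=a_k$ and $r\nu_k/\nu_n\notin\mathbb{Z}$), and along it the direction $v$ is the unit direction $e_n$.

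Next I would run the construction and control distances. Since $x_n-x_n'=\nu_n(m-m')$ and $x_k-x_k'=\nu_k(m-m')+(j_k-j_k')$ for points of one coset, $\rho(x,x')<d$ forces $|m-m'|<d$ and $|j_k-j_k'|<(V+1)d$; hence it suffices to make the marker points pairwise at distance at least $d':=(V+1)d+1$ in the $(j,m)$-coordinates. Put $B:=2(d')^n$, list the nonempty residues as $r_1,\dots,r_p$ with $p\le\nu_n$, and for each $s$ apply Lemma~\ref{lem:1direction} with dimension $n$, parameter $d'$ and direction $e_n$ to the generalized $n$-rectangle $I_{r_s}\times\big[(s-1)(B+d),\ (s-1)(B+d)+B-1\big]$ (legitimate, as its $e_n$-side length is $B-1\ge 2(d')^n-d'$). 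Calling the output $M_{r_s}$ and letting $M$ be the union of the pull-backs of the $M_{r_s}$ to $\mathbb{Z}^n$, property~(2) of Lemma~\ref{lem:1direction} gives, for any $x\in L(S,v)$ with coset-coordinates $(j,m)$, some $a$ with $(j,m+a)\in M_{r_s}$, i.e.\ $x+av\in M$, which is requirement~(2).

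It remains to check the separation and the inclusion $M\subseteq R$. Two marker points in the same coset are $\ge d'$ apart in $(j,m)$-coordinates by property~(1) of Lemma~\ref{lem:1direction}, hence $\ge d$ apart in $\rho$ by the estimate above. Two marker points in different cosets $r_s,r_t$ (say $s<t$) have $m$-coordinates in intervals separated by a gap of at least $d+1$, so $|m-m'|\ge d+1$, whence $|x_n-x_n'|\ge\nu_n|m-m'|-|r_s-r_t|\ge\nu_n(d+1)-(\nu_n-1)>d$. Finally all $x_n$-coordinates of points of $M$ lie in an interval of length at most $\nu_n^{2}(B+d)+\nu_n$, so it is enough to take $H(n,d,v,i):=|\nu_i|^{2}(B+d)+|\nu_i|$ for $M\subseteq R$. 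The only genuinely delicate aspect is the one already flagged: there is no coordinate change turning the whole parallelopiped into a single box, so the marker set must be built coset by coset and the pieces must then be kept apart — this is the role of the gap $d$ between the $m$-intervals — all while the distances are held at exactly $d$ rather than only up to a multiplicative constant, which is why one inflates $d$ to $d'$ before invoking Lemma~\ref{lem:1direction}. Degenerate bases cause no trouble: some cosets become empty and the others are lower-dimensional generalized rectangles, to which Lemma~\ref{lem:1direction} still applies verbatim.
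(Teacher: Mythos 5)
Your argument is correct, and it reaches the lemma by a genuinely different route than the paper's. Both proofs begin the same way at the outer level: split $L(S,v)$ according to the residue of the $e_i$-coordinate modulo $|\nu_i|$, build a marker set for each residue class, and stack the classes at different heights with gaps of order $d$ so that distinct classes stay $d$-separated. The difference is in how a single residue class is handled. The paper stays in the original coordinates and re-runs the inductive construction of Lemma~\ref{lem:1direction} in the slanted direction: the base case (a single line $x_0+\mathbb{Z}v\cap\mathbb{Z}^n$) is treated via Lemma~\ref{lem:1directiongen} applied along multiples of $\alpha^{-1}v$ with $\alpha=\gcd(\nu_1,\dots,\nu_n)$, and the inductive step on $\dim(Q)$ uses staggered translates $(\lambda \bmod m_k)(h_k+1)dv$ to keep successive layers apart. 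You instead straighten each residue class by the shear $m=(x_n-r)/\nu_n$, $j_k=x_k-\nu_k m$, under which $v$ becomes $e_n$ and the class of $L(S,v)$ becomes a product $I_r\times\mathbb{Z}$, and then quote Lemma~\ref{lem:1direction} verbatim with the inflated parameter $d'=(\|v\|+1)d+1$ to absorb the bi-Lipschitz distortion of the shear; your observation that no single global shear works, because the integer slices $I_{r,k}$ change length with the residue and may be empty for degenerate bases, is exactly the right caveat, and your cross-coset estimate $|x_n-x_n'|\geq \nu_n(d+1)-(\nu_n-1)>d$ and height bound $H=|\nu_i|^2(B+d)+|\nu_i|$ check out. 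The only step you gloss is invoking Lemma~\ref{lem:1direction}(2) for points outside the chosen rectangle but with the same $\sigma_n$-projection; this extension is immediate, and the paper itself uses it (e.g.\ in item (ii) of the proof of Lemma~\ref{lem:1directiongen}). As for what each approach buys: yours avoids the gcd bookkeeping and the induction on the dimension of the base, giving a shorter reduction with explicit (if larger) constants, which is harmless since the lemma only asserts the existence of some $H(n,d,v,i)$; the paper's construction keeps everything intrinsic to the slanted geometry, matching the style of the other package constructions in Section~\ref{sec:6}.
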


\begin{proof} Without loss of generality assume that $i=n$ and 
$$ S=[a_1,b_1]\times  \cdots \times [a_{n-1},b_{n-1}]\times\{0\}=Q\times \{0\}. $$
By straightforward computation, we can express $L(S,v)$ as
$$ \bigcup\{S_t+\mathbb{Z}v\colon 0\leq t\leq \nu_n-1\} $$
where, for each $0\leq t\leq \nu_n-1$,
$$ S_t=[a_{1,t},b_{1,t}]\times\cdots \times[a_{n-1,t}, b_{n-1,t}]\times \{t\} $$
and
$$ [a_{i,t}, b_{i,t}]=\left\{ x\in\mathbb{Z}\colon a_i+t\displaystyle\frac{\nu_i}{\nu_n}\leq x\leq b_i+t\frac{\nu_i}{\nu_n}\right\} $$
for each $1\leq i\leq n-1$.

Let 
$$ \alpha=\mbox{gcd}(\nu_1,\dots, \nu_n) $$
and let $D(n,d,\alpha)$ be the number given by Lemma~\ref{lem:1directiongen}. Inductively define
$$ h_0=D(n,d,\alpha)\alpha^{-1}\nu_n,\ h_{k+1}=h_k+h_kd\|v\|(h_k+1)d\nu_n \mbox{ for $0\leq k<n-1$.} $$
Indutively define
$$ H_0=h_{n-1},\ H_{t+1}=(H_t+d)\nu_n \mbox{ for $0\leq t<\nu_n-1$.} $$
Let
$$ H(n,d,v,i)=H_{\nu_n-1}. $$
Now let $R$ be a generalized parallelopiped with base $S$, direction $v$ and $e_i$-height $H(n,d,v,i)$. We construct sets $M_t\subseteq R$ for $t\in [0, \nu_n-1]$ then let 
$M=\bigcup_{t\in [0, \nu_n-1]}M_t$. 

To construct $M_0$, we use induction on the ``dimension" of $Q$, i.e.,
$$ \mbox{dim}(Q)=|\{1\leq i\leq n-1\colon a_i\neq b_i\}|. $$
First assume $\mbox{dim}(Q)=0$. In this case, $S$ is a singleton $\{x_0\}$.  Applying Lemma~\ref{lem:1directiongen} to the ``$1$-dimensional parallelopiped" in $\mathbb{Z}^n$
$$ R_0=x_0+[0, D(n,d,\alpha)-1](\alpha^{-1}v)\subseteq R, $$
we obtain a subset $M_{0,0}\subseteq R_0$ such that
\begin{enumerate}
\item[(i)] for any distinct $x, y\in M_{0,0}$, $\rho(x,y)\geq d \|\alpha^{-1}v\|\geq d$;
\item[(ii)] for any $x\in L(\{x_0\}, v)=(x_0+\mathbb{R}v)\cap\mathbb{Z}^n$, there is $a\in \mathbb{Z}$ such that $x+a\alpha^{-1}v\in M_{0,0}$.
\end{enumerate}
Note that $R_{0,0}$ is a generalized parallelopiped with base $\{x_0\}$, direction $v$ and $e_n$-height $D(n,d,\alpha)\alpha^{-1}\nu_n=h_0$.

In general, assume $\dim(Q)=k+1$ where $0\leq k< n-1$. Without loss of generality, assume
$$ Q=C\times [a_{n-1},b_{n-1}] $$
where $a_{n-1}<b_{n-1}$ and $\mbox{dim}(C)=k$. Moreover, assume $a_{n-1}=0$ and $b_{n-1}=\ell>0$. For each $0\leq \lambda\leq \ell$, let
$$ Q_\lambda=C\times \{\lambda\}. $$
Then
$$ Q_\lambda=Q_0+\lambda e_{n-1} \mbox{ and } Q=\bigcup_{\lambda\in [0,\ell]}Q_\lambda. $$
By the inductive hypothesis, for any generalized parallelopiped $R_{0,k}$ with base $Q_0\times\{0\}$, direction $v$ and $e_n$-height $h_k$, there is a subset $M_{0,k}\subseteq R_{0,k}$ such that (1) and (2) hold. Let $m_k=h_kd\|v\|$. Let $R_{0,k+1}$ be a generalized parallelopiped with base $Q\times \{0\}$, direction $v$ and $e_n$-height $h_{k+1}$. Define
$$ M_{0,k+1}=\bigcup_{0\leq\lambda\leq\ell} (M_{0,k}+\lambda e_{n-1})+(\lambda\!\!\!\! \mod m_k) (h_k+1)dv. $$
Then $M_{0,k+1}$ has height $h_k+m_k(h_k+1)d\nu_n=h_{k+1}$, and satisfies (1) and (2) with base $S$. 

Now let $M_0=M_{0,\delta}$, where $\delta=\mbox{dim}(Q)\leq n-1$. Then $M_0\subseteq R_0$ has height at most $H_0$. 

Finally, for each $0\leq t\leq \nu_n-1$, consider $S_t$. Note that for each $1\leq i\leq n-1$, $b_{i,t}-a_{i,t}=b_i-a_i$. Thus $S_t$ is a ``copy" of $S_0=S$ except that its $e_n$-coordinate is $t$. Thus we can define
$$ M_t=M_0+(a_{1,t},\dots, a_{n-1,t},t)-(a_1,\dots, a_n,0)+(H_t+d)v $$
and $M=\bigcup_{0\leq t\leq \nu_n-1}M_t$. Then $M$ is as required.
\end{proof}

Let $R_0, R_1$ be $n$-dimensional rectangles in $\mathbb{Z}^n$ and let $\delta>0$ be a positive integer. We say that $R_1$ is the  {\em $\delta$-extension} of $R_0$, or $R_0$ is the {\em $\delta$-core} of $R_1$, if 
$$ R_0=\left\{ x\in R_1\colon \rho(x, R_1^c)>\delta\right\}, $$
where $R_1^c=\mathbb{Z}^n\setminus R_1$ is the complement of $R_1$. Note that if the side lengths of $R_1$ are at least $2\delta+1$ then its $\delta$-core is an $n$-dimensional rectangle. Conversely, any $n$-dimensional rectangle has a $\delta$-extension. 

\begin{lemma}\label{lem:1directionslanted2}
Let $n, d\geq 1$ be positive integers and let $v=(\nu_1,\dots, \nu_n)\in\mathbb{Z}^n$ have only non-zero coordinates. Then there is an integer $\delta(n, d, v)$ such that for any $n$-dimensional rectangle $R_0$, there is a subset $M\subseteq R_1\setminus R_0$, where $R_1$ is the $\delta(n,d,v)$-extension of $R_0$, such that
\begin{enumerate}
\item[(1)] for any distinct $x,y\in M$, $\rho(x,y)\geq d$ and $\rho(x, R_1^c)\geq d$;
\item[(2)] for any $x\in R_0$, there is $a\in\mathbb{Z}$ such that $x+av\in M$.
\end{enumerate}
\end{lemma}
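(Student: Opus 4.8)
The plan is to reduce this ``slanted marker set in a thickened rectangle'' statement to the single-parallelopiped result Lemma~\ref{lem:1directionslanted} by a packaging-and-spacing argument. First I would fix the height $H=H(n,d,v,i)$ coming from Lemma~\ref{lem:1directionslanted} (say with $i$ chosen so that $|\nu_i|$ is smallest, though any $i$ works), and set $\delta(n,d,v)$ large enough to accommodate what follows: roughly $\delta$ must be at least $H\|v\|/|\nu_i| + d + \|v\|$, so that a parallelopiped of $e_i$-height $H$ issued from a base inside $R_0$ stays inside $R_1$, together with a $d$-buffer to the complement $R_1^c$. Given $R_0$, form $R_1$ as its $\delta$-extension. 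Fix a coordinate direction $e_i$. Slice $R_0$ by hyperplanes $\pi_i = c$; more precisely, partition the range $\pi_i(R_0)$ into consecutive blocks of a suitable bounded length so that the slabs of $R_0$ sitting over these blocks are pairwise separated in the $e_i$-direction by gaps of length $\geq d$ after we later ``thicken along $v$''. Over each retained slab take its base face $S$ (a generalized $(n-1)$-dimensional rectangle crossed with a singleton in coordinate $i$), sitting just inside $R_1$ on the low-$\pi_i$ side, and issue the finite parallelopiped $R^{(S)} = L(S, v, H)$. By the choice of $\delta$ this parallelopiped lies in $R_1$; by Lemma~\ref{lem:1directionslanted} it carries a marker set $M^{(S)} \subseteq R^{(S)}$ with pairwise $\rho$-distance $\geq d$ and hitting every $v$-line through $L(S,v)$. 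I would then set $M = \bigcup_S M^{(S)}$.

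The verification of (2) is immediate: every $x \in R_0$ lies over exactly one retained block, hence lies on a $v$-line $x + \mathbb{Z}v$ that meets the corresponding $L(S,v)$, and Lemma~\ref{lem:1directionslanted}(2) supplies the required $a$ with $x + av \in M^{(S)} \subseteq M$. For (1), distances within a single $M^{(S)}$ are $\geq d$ by Lemma~\ref{lem:1directionslanted}(1); distances between $M^{(S)}$ and $M^{(S')}$ for distinct slabs are $\geq d$ provided the spacing of blocks in the $\pi_i$-direction was chosen so that the $\pi_i$-ranges of $R^{(S)}$ and $R^{(S')}$ are $\geq d$ apart --- which is exactly the ``spacing'' step, analogous to Lemma~\ref{lem:dim1packagingandspacing}: if consecutive retained $\pi_i$-blocks are separated by at least $d$ and each $R^{(S)}$ has $\pi_i$-extent $\leq H\cdot|\nu_i|$ (bounded), then leaving a $\pi_i$-gap of size $\geq d + H|\nu_i|$ between consecutive retained blocks suffices. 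Finally $\rho(x, R_1^c) \geq d$ for $x \in M$ follows from the $d$-buffer built into $\delta$, since every $M^{(S)}$ was placed in the $d$-core of $R_1$. One also needs $M \cap R_0 = \varnothing$: this is arranged by issuing each parallelopiped from a base face that lies in the collar $R_1 \setminus R_0$ (just outside $R_0$ on the low-$\pi_i$ side, possible because $\delta$ is large), and by the ``thickened'' width being chosen so the parallelopiped stays in the collar --- alternatively one simply discards any marker point that happens to fall in $R_0$ and re-issues, but the cleaner route is to place the bases outside $R_0$ from the start.

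The main obstacle is bookkeeping the geometry so that the parallelopipeds $R^{(S)}$ simultaneously (a) stay inside $R_1$, (b) stay outside $R_0$, and (c) collectively cover every $v$-line through $R_0$ while remaining pairwise $d$-separated. Point (c) is the delicate one: a $v$-line through an interior point of $R_0$ enters $R_1\setminus R_0$ somewhere, and one must ensure it actually passes through one of the chosen base faces $S$, not between two of them. The way to guarantee this is to take the base faces $S$ not as thin $(n-1)$-dimensional slices but with full $e_i$-thickness equal to one block length, i.e. to cover $\pi_i(R_0)$ with adjacent ``issuing slabs'' of bounded thickness and for each issue the parallelopiped from its \emph{entire} low face; then every point of $R_0$, projected back along $-v$ until it exits through $\pi_i = (\text{slab bottom})$, lands in the base of its slab's parallelopiped. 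Once the thickness of an issuing slab is pinned (it only needs to exceed $\|v\|/|\nu_i|$ so that consecutive $v$-lines cannot skip it) and the inter-slab spacing is pinned (to get (b)-type separation after thickening along $v$), the bound $\delta(n,d,v)$ is forced, and the three requirements reconcile.
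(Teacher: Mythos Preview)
Your reduction to Lemma~\ref{lem:1directionslanted} is the right instinct, but the packaging you propose does not work, and the obstruction is geometric rather than bookkeeping. You work with a single coordinate direction $e_i$ and try to cover all $v$-lines through $R_0$ using bases $S$ that are perpendicular to $e_i$ and sit in the collar. But a $v$-line through a point $x\in R_0$ need not exit $R_0$ through a face perpendicular to $e_i$ at all: as you travel along $v$ from $x$, the $\sigma_i$-coordinates shift by an amount proportional to the $\pi_i$-distance travelled, and if $x$ is deep in $R_0$ this shift is comparable to the full $\pi_i$-extent of $R_0$, which is unbounded in terms of $n,d,v$. Concretely, for your base $S$ at some $\pi_i$-level in the collar, the point where the $v$-line through $x$ meets that level has $\sigma_i$-coordinates displaced from $\sigma_i(x)$ by roughly $(\pi_i(x)-\pi_i(S))\cdot \nu_j/\nu_i$; for this to land in $S$ you would need $S$ to have $\sigma_i$-extent exceeding $\sigma_i(R_0)$ by an amount that scales with $R_0$, which cannot fit in a collar of width $\delta(n,d,v)$. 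Your alternative of putting the base ``just below'' each slab does not help either: for interior slabs that base lies inside $R_0$, and issuing in direction $v$ (say with $\nu_i>0$) from a base just below $R_0$ immediately re-enters $R_0$. So you cannot simultaneously achieve (b) $M\cap R_0=\varnothing$ and (c) coverage of every $v$-line through $R_0$ with bases perpendicular to a single $e_i$.

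The paper's argument resolves exactly this by using $n$ bases, one for each coordinate direction. One selects the corner $x_k$ of $R_0$ determined by the signs of the $\nu_i$ (so that $x_k+av\notin R_0$ for all $a>0$), and takes $S_1,\dots,S_n$ to be the $n$ faces of $R_0$ meeting at $x_k$. The point is that any forward $v$-ray from $x\in R_0$ must exit $R_0$ through one of these $n$ faces, giving $R_0\subseteq\bigcup_{i=1}^n L(S_i,v)$. One then applies Lemma~\ref{lem:1directionslanted} once for each $i$, with base $S_i$ translated a bounded amount along $v$ into the collar; since these translates move \emph{away} from $R_0$, the resulting parallelopipeds stay in $R_1\setminus R_0$ automatically, and staggering the translations by $d\|v\|$ gives the required separation between the $M_i$. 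The missing idea in your proposal is precisely this use of one face per direction rather than many slabs in one direction.
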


\begin{proof} Let $H(n,d, v, i)$ be the numbers given by Lemma~\ref{lem:1directionslanted}. Let 
$$\delta(n,d,v)=(n+1)d\|v\|+\displaystyle\sum_{i=1}^n H(n,d,v,i)\|v\|. $$
Let $R_0$ be an $n$-dimensional rectangle. Let $R_1$ be the $\delta(n,d,v)$-extension of $R_0$. Let 
$$ k=\displaystyle\sum_{i=1}^n \alpha_i 2^{i-1}, $$
where 
$$ \alpha_i=\left\{\begin{array}{ll} 0, & \mbox{ if $\nu_i<0$,} \\ 1, & \mbox{ if $\nu_i>0$.}\end{array}\right. $$
Let $x_k$ be the $k$-th element of the canonical enumeration of all extreme points of $R_0$. Note that $x_k$ is an element of $n$ many faces of $R_0$. According to the normal vector of these faces, we enumerate them as $S_1, \dots, S_n$. For $1\leq i\leq n$, $S_i$ is perpendicular to $e_i$. Also, by our choice of $x_k$, we have that for any $1\leq i\leq n$ and any positive integer $a$, $(S_i+av)\cap R_0=\varnothing$. 

Define inductively
$$ v_1=v, \ v_{i+1}=v_i+(H(n,d,v, i)+d)v \mbox{ for $1\leq i<n$.} $$
For each $1\leq i\leq n$, apply Lemma~\ref{lem:1directionslanted} to the parallelopiped $R_i$ with base $S_i+v_i$, direction $v$ and $e_i$-height $H(n,d,v,i)$ to obtain a subset $M_i\subseteq R_i$ such that
\begin{enumerate}
\item[(1)] for any distinct $x, y\in M_i$, $\rho(x,y)\geq d$;
\item[(2)] for any $x\in L(S_i, v)$, there is $a\in \mathbb{Z}$ such that $x+av\in M_i$.
\end{enumerate}
For dimension 2, these parallelopipeds are illustrated in Figure~\ref{fig:parallelogram}.

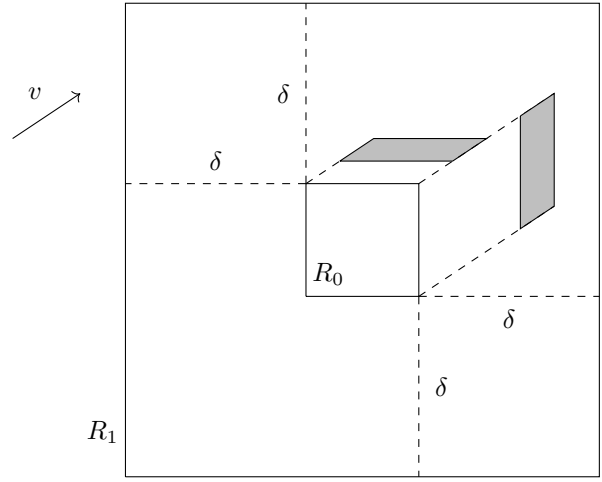
\begin{figure}[h]
        \centering
        \begin{tikzpicture}[scale=0.3]

\draw (0,0) to (21,0) to (21,21) to (0,21) to (0,0);
\draw(8,8)to(8,13) to (13,13) to (13,8) to (8,8);
\draw[->] (-5,15)to(-2,17);
\node at (-4, 17) {$v$};
\node at (-1, 2) {$R_1$};
\node at (9, 9) {$R_0$};

\draw[dashed] (8,13) to (8,21);
\draw[dashed] (8,13) to (0,13);
\draw[dashed] (13,8) to (21,8);
\draw[dashed] (13,8) to (13,0);
\node at (7,17) {$\delta$};
\node at (4, 14) {$\delta$};
\node at (17, 7) {$\delta$};
\node at (14, 4) {$\delta$};

\draw[fill=lightgray] (9.5,14) to (14.5,14) to (16, 15) to (11,15) to (9.5,14);
\draw[dashed] (13,13) to (19, 17);
\draw[fill=lightgray] (17.5,16) to (19, 17) to (19, 12) to (17.5, 11) to (17.5,16);
\draw[dashed] (13,8) to (19,12);
\draw[dashed] (8,13) to (9.5,14);



        \end{tikzpicture}
\caption{\label{fig:parallelogram}Using parallelograms as packages in $\mathbb{Z}^2$.}
\end{figure}

Let $M=\bigcup_{i=1}^n M_i$. By our construction, for any distinct $1\leq i<j\leq n$, $\rho(R_i, R_j)\geq d\|v\|\geq d$; hence $\rho(M_i, M_j)\geq d$. This guarantees that for any distinct $x, y\in M$, $\rho(x, y)\geq d$. Also, note that 
$$\delta(n,d,v)-\|v_n\|-H(n,d,v,n)\|v\|\geq d\|v\|\geq d, $$
hence $\rho(M, R_1^c)\geq d$.

Finally, since $R_0\subseteq \bigcup_{i=1}^n L(S_i, v)$, we have that for any $x\in R_0$, there is $a\in \mathbb{Z}$ such that $x+av\in M$.
\end{proof}

\subsection{Clopen strong markers}{\ }

We are now ready to prove our second main theorem of the paper.

\begin{theorem}\label{thm:maingen} Let $n, d_0\geq 1$ be positive integers and let $S\subseteq \mathbb{Z}^n$ be a finite generating set. Suppose for each $g\in S$, $\{1\leq i\leq n\colon \pi_i(g)\neq 0\}$ has size either $1$ or $n$. Then there is a positive integer $\Delta$ and a clopen subset $M\subseteq F(2^{\mathbb{Z}^n})$ such that
\begin{enumerate}
\item[(1)] for any distinct $x,y\in M$ in the same orbit, $\rho(x,y)\geq d_0$;
\item[(2)] for any $g\in S$ and any $x\in F(2^{\mathbb{Z}^n})$, there are non-negative integers $a, b\leq \Delta$ such that $ag\cdot x\in M$ and $-bg\cdot x\in M$.
\end{enumerate}
\end{theorem}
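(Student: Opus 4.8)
The plan is to mimic the strategy of Theorem~\ref{thm:main}, replacing the single-direction packaging Lemma~\ref{lem:1direction} with the appropriate variants developed in Section~6: Lemma~\ref{lem:1directiongenmultiple} for generators supported on a single coordinate, and Lemma~\ref{lem:1directionslanted2} for generators with all coordinates non-zero. First I would partition $S$ as $S = S_1 \cup \dots \cup S_n \cup S_{\mathrm{full}}$, where for $1\le i\le n$ the set $S_i$ collects those $g\in S$ with $\{j : \pi_i(g)\ne 0\} = \{i\}$ (so $g = \alpha_{i}e_i$ for some non-zero $\alpha_i$), and $S_{\mathrm{full}}$ collects those $g$ with all coordinates non-zero. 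For each $i$, write $S_i = \{\alpha_{i,1}e_i, \dots, \alpha_{i,m_i}e_i\}$, and let $D(n, d, \alpha_{i,1}, \dots, \alpha_{i,m_i})$ be the constant from Lemma~\ref{lem:1directiongenmultiple}; for each $v\in S_{\mathrm{full}}$, let $\delta(n, d, v)$ be the constant from Lemma~\ref{lem:1directionslanted2}. These replace the bound $2d_0^n - d_0$ (the minimal side length needed by Lemma~\ref{lem:1direction}) in the choice of the decreasing sequence $d_1 > d_2 > \dots > d_n$ and the counting constants $N_i$, and ultimately determine $D_1$ and $\Delta = 2D_1$.

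Next I would set up the marker-region machinery exactly as in the proof of Theorem~\ref{thm:main}: apply Lemma~\ref{lem:basicmarker} with $d = D_1$ to get clopen marker regions of side length $D_1$ or $D_1 + 1$, take the clopen set $X$ of ``least corners'' $x_R$, and use Lemma~\ref{lem:basic2} with a large $d$ to decompose $X$ into $H$ clopen pieces $X_1, \dots, X_H$ so that within any single piece the marker regions are pairwise non-adjacent; this lets the per-region constructions in a fixed step proceed independently and continuously. The construction of $M$ then runs in rounds. Rounds $1$ through $n$ handle the single-coordinate generators: in round $i$, inside each marker region $R$ we build a collection $\mathcal{P}_i^R$ of pairwise disjoint generalized $n$-dimensional subrectangles covering $\sigma_i(R)$ under $\sigma_i$, with $\pi_i$-lengths between $D(n,d_0,\alpha_{i,1},\dots,\alpha_{i,m_i})$ and $d_i$, spaced $\ge d_i$ apart from each other, from the earlier $\mathcal{P}_j^R$ ($j<i$), and from the $\mathcal{P}_j^T$ of neighboring regions $T$; this uses the packaging-and-spacing Lemma~\ref{lem:1directionmultiple} combined with the special ``boundary-crossing'' packages $R_P$ and the subdivision count from Lemma~\ref{lem:division}, just as before, and the inductive hypotheses (i)--(vii) carry over verbatim with $2d_0^n - d_0$ replaced by the larger constants. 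An extra final round handles $S_{\mathrm{full}}$: for each $v\in S_{\mathrm{full}}$, within each $R$ we shrink to the $\delta(n,d_0,v)$-core $R_0$ of a slightly smaller rectangle $R_1 \subseteq R$ and invoke Lemma~\ref{lem:1directionslanted2} to place a set $M_v^R \subseteq R_1\setminus R_0$ that $v$-hits every point of $R_0$ while staying $\ge d_0$ from everything previously placed and from $R_1^c$ (hence from the rest of $R$ and from neighboring regions). Finally apply Lemma~\ref{lem:1directiongen} inside each package of $\mathcal{P}_i^R$ to produce the actual marker points in direction $e_i$, take the union over all rounds, steps, and marker regions, and verify clopenness and properties (1)--(2).

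The main obstacle is the bookkeeping needed to guarantee property~(2) for every $x\in F(2^{\mathbb Z^n})$ and \emph{every} $g\in S$ simultaneously, with a \emph{uniform} bound $\Delta$. The subtlety is that a point $x$ near the boundary of its marker region needs its $g$-hit to lie in an adjacent region or a ``boundary package'' $R_P$ straddling a region wall; this is exactly what the special packages $R_P$ and the inductive hypothesis (vii) (relating $\mathcal{P}_i^R$ to $\mathcal{P}_j^T$ for $T\ne R$) are designed for in the single-coordinate case, and what the $\delta$-extension trick of Lemma~\ref{lem:1directionslanted2} handles in the full-support case. The second delicate point is \emph{separation}: a marker point placed for generator $g$ in region $R$ must stay $\ge d_0$ from a marker point placed for a different generator $g'$ in region $R$ or in a neighbor $T$ — this is why the slanted packages are confined to thin collars inside $R_1\setminus R_0$ with $\rho$-distance $\ge d_0$ from $R_1^c$, and why the whole construction is organized into disjoint rounds with the geometrically separated collections $\mathcal{P}_i^R$. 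Once the constants $d_i$ and $N_i$ are chosen large enough — dominating all of $D(n,d_0,\alpha_{i,1},\dots,\alpha_{i,m_i})$, all $\delta(n,d_0,v)$, and the relevant $\|v\|$ factors — the verification that each round's inductive hypotheses survive, and that the final $M$ is clopen with properties (1) and (2) and bound $\Delta = 2D_1$, is routine and parallels the end of the proof of Theorem~\ref{thm:main}.
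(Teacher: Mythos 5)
There is a genuine gap in how you handle the full-support generators, and it concerns exactly the point you wave at in your ``main obstacle'' paragraph. Lemma~\ref{lem:1directionslanted2} only guarantees hits for points of the \emph{core} $R_0$: its conclusion (2) says that every $x\in R_0$ has some $a$ with $x+av\in M$, and it says nothing about points of the region lying outside $R_0$, in particular points near the walls of the marker region. Your claim that ``the $\delta$-extension trick of Lemma~\ref{lem:1directionslanted2} handles'' the boundary-crossing problem for $v\in S_{\mathrm{full}}$ is not correct: the $\delta$-extension is only a device for placing the slanted markers in a collar around the core (so they stay $\geq d$ from the core and from $R_1^c$); it does not serve boundary points. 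So as written, a point $x$ close to a face of its marker region has no guaranteed $v$-hit within any bounded distance, and property (2) of the theorem is unproven for such $x$. What is missing is the paper's Lemma~\ref{lem:hit}: if $v$ has all coordinates non-zero and the region side length $\Delta$ satisfies $\Delta>2^{n+1}(\delta+1)\|v\|$, then from \emph{every} $x$ there is $a\in[-\frac{1}{2}\Delta,\frac{1}{2}\Delta]$ with $av\cdot x$ in the $\delta$-core $C_\delta$ of some region. This is where the hypothesis that all coordinates of $v$ are non-zero is used globally (an axis line can run parallel to a face and never enter any core, which is why axis generators need the boundary-straddling packages but slanted ones do not), and it is what converts the core-only conclusion of Lemma~\ref{lem:1directionslanted2} into a uniform bound $\Delta$ in (2). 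Your proposal never states or proves this hitting fact, nor imposes the needed largeness of the region size relative to $2^{n+1}(\delta+1)\|v\|$, so the slanted part of the construction does not close.

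A structural remark, secondary to the gap: the paper does not fold the slanted generators into the round/step induction with hypotheses (i)--(vii) at all. It works at two scales: large $E_\Delta$-regions carrying a nested chain of cores $K_0\supseteq K_1\supseteq\cdots\supseteq K_{m_0}$ whose successive collars host one application of Lemma~\ref{lem:1directionslanted2} per $v_k$ (no cross-region bookkeeping is needed, precisely because Lemma~\ref{lem:hit} sends every point into a core), while the axis generators are handled by invoking Theorem~\ref{thm:strongZngen} on a $D_1$-refinement and then restricting to the clopen zone $X$ of classes meeting the upper faces, which keeps those markers near the region boundaries and hence $\geq d_0$ away from the collars. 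If you add the hitting lemma and a nesting of the cores for the different $v\in S_{\mathrm{full}}$ (your ``slightly smaller rectangle'' per $v$ must be organized so the collars for distinct $v$'s are disjoint and mutually $d_0$-separated), your single-scale plan could likely be repaired, but as it stands the key step is absent.
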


The rest of this subsection is devoted to a proof of Theorem~\ref{thm:maingen}. Let $n, d_0$ and $S$ be given. We first apply Lemma~\ref{lem:basicmarker} with an unspecified even number $\Delta>4$ to obtain a clopen equivalence relation $E_{\Delta}$ on $F(2^{\mathbb{Z}^n})$ such that each of the $E_\Delta$-equivalence classes is an $n$-dimensional rectangle with side lengths either $\Delta$ or $\Delta+1$. For each $x\in F(2^{\mathbb{Z}^n})$, let $[x]_E$ be the $E_\Delta$-equivalence class containing $x$. 

Let $\delta<\frac{1}{2}\Delta-1$ be a positive integer. Let
$$ C_\delta=\{x\in F(2^{\mathbb{Z}^n})\colon \rho(x, [x]\setminus [x]_E)> \delta\}. $$
Then $C_\delta$ consists of regions that are $n$-dimensional rectangles in $F(2^{\mathbb{Z}^n})$; in fact, for each $E_\Delta$-class $R$, $C_\delta\cap R$ is the $\delta$-core of $R$, which is an $n$-dimensional rectangle with side lengths $\Delta-2\delta$ or $\Delta-2\delta+1$.

\begin{lemma}\label{lem:hit} Let $v=(\nu_1,\dots, \nu_n)\in\mathbb{Z}^n$ where $\nu_i\neq 0$ for all $1\leq i\leq n$. Suppose 
$\Delta>2^{n+1}(\delta+1)\|v\|$. Then for any $x\in F(2^{\mathbb{Z}^n})$, there is an integer $a\in [-\frac{1}{2}\Delta, \frac{1}{2}\Delta]$ such that $av\cdot x\in C_\delta$.
\end{lemma}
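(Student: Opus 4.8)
The plan is to trace a line in direction $v$ starting from $x$ and show that within $\frac{1}{2}\Delta$ steps in one direction we must land inside the $\delta$-core $C_\delta$ of some $E_\Delta$-class. First I would recall that the $E_\Delta$-classes tile the orbit $[x]$ by $n$-dimensional rectangles of side length $\Delta$ or $\Delta+1$, and that for each such class $R$, the set $C_\delta \cap R$ is its $\delta$-core, which is an $n$-dimensional rectangle of side length at least $\Delta - 2\delta > \frac{1}{2}\Delta$ in each direction (using $\delta < \frac{1}{2}\Delta - 1$). The point $x$ lies in some class $R_0$; if $x \in C_\delta$ already we are done with $a = 0$, so assume $x$ is within distance $\delta$ of the boundary of $R_0$ in some direction.

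The key geometric observation is that the boundary ``shell'' of width $\delta$ around any $E_\Delta$-class is thin compared to the core, and that walking in direction $v$ with step size at most $\|v\|$ cannot skip over a core. Concretely, I would argue as follows. Consider the points $x, v\cdot x, 2v\cdot x, \dots$ As we increase $a$, the corresponding point in $[x]$ moves by the vector $v$ each time; since each coordinate of $v$ is nonzero, after at most roughly $\frac{\Delta}{|\nu_i|}$ steps in any coordinate direction we traverse a full class width. The worry is landing repeatedly in boundary shells of consecutive classes. To rule this out, note that a line in direction $v$, when it passes from one $E_\Delta$-class $R$ to an adjacent one $R'$, enters $R'$ and the portion of the line inside $R'$ has length (in the $\rho$-metric, measured by number of lattice steps along $v$) at least about $(\Delta - 2\delta)/\|v\|$ before it can exit again — because to cross $R'$ entirely in any one coordinate $i$ requires changing that coordinate by at least $\Delta$, i.e.\ at least $\Delta/|\nu_i| \geq \Delta/\|v\|$ steps, whereas to reach the $\delta$-core of $R'$ from its entering face requires only increasing the relevant coordinate by $\delta \le \delta \|v\|/|\nu_i|$... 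I would instead bound the number of classes the segment $\{av\cdot x : 0 \le a \le \frac12\Delta\}$ can meet: it is at most $n \cdot (\tfrac12\Delta \cdot \|v\| / \Delta) + 1$ roughly, but more carefully, as the line moves a total $\rho$-distance $\frac12\Delta\cdot\|v\|$ it can cross at most $n\cdot\lceil \frac{\frac12\Delta\|v\|}{\Delta}\rceil \le n\|v\|$ class-boundary hyperplanes in each coordinate, hence meets at most $n\|v\| + 1 \le 2^{n+1}\|v\|$ classes; inside at least one of these the line must spend enough consecutive steps to reach the core, since the core has codimension-free ``radius'' $> \frac12\Delta$ and the total boundary-shell volume traversed is $< \frac12\Delta\|v\|$ while the total length is $\frac12\Delta\|v\|$ — so by pigeonhole, using $\Delta > 2^{n+1}(\delta+1)\|v\|$, some class is entered and the line stays away from its boundary by more than $\delta$ for at least one lattice point on it.

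The main obstacle I anticipate is making the pigeonhole precise: I need to show not merely that the line meets a class, but that it meets a lattice point of that class's $\delta$-core. The clean way is to pick, among the at most $2^{n+1}\|v\|$ classes $R_1, \dots, R_m$ met by the segment in order, the class $R_j$ with the largest ``entry-to-exit'' interval $[a_j, a_j']$ of parameters $a$ with $av\cdot x \in R_j$; since these intervals partition (a subinterval of) $[0,\frac12\Delta]$, the largest has length at least $\frac{\Delta/2 - m}{m} \ge \frac{\Delta}{2^{n+2}\|v\|} - 1 > \delta$ by the hypothesis on $\Delta$, wait — I'd need $\frac{\Delta}{2} \ge \sum$ lengths, and the largest length $\ge \frac{1}{m}(\frac\Delta2 - 1)\ge \frac{\Delta}{2^{n+2}\|v\|}-1$; then within that longest run of parameters the line sits inside $R_j$ for more than $\delta$ consecutive $v$-steps, and since moving $\delta$ steps changes each coordinate by $\delta|\nu_i| \ge \delta$, the midpoint of that run is at $\rho$-distance $> \delta$ from $\partial R_j$, hence lies in $C_\delta$. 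One also needs the sign flexibility: if the positive direction exits the starting configuration badly, the same argument applied to $-v$ (or simply noting the segment of length $\Delta$ centered at $a=0$, split into $[-\frac12\Delta,0]$ and $[0,\frac12\Delta]$) handles it, giving $a \in [-\frac12\Delta, \frac12\Delta]$ as claimed. I would write this out carefully, tracking constants so that the final inequality needed is exactly subsumed by $\Delta > 2^{n+1}(\delta+1)\|v\|$.
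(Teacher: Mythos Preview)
Your pigeonhole idea is the right one, but the bookkeeping has a genuine gap that will not close under the stated hypothesis $\Delta > 2^{n+1}(\delta+1)\|v\|$. You bound the number $m$ of $E_\Delta$-classes met by the segment $\{av\cdot x: a\in[0,\tfrac12\Delta]\}$ by $2^{n+1}\|v\|$ and conclude that the longest run of parameters in a single class has length at least $\tfrac{\Delta}{2^{n+2}\|v\|}-1$; from the hypothesis this is only $>\tfrac{\delta+1}{2}-1$. Two problems: first, that is not even $>\delta$ once $\delta\ge 2$; second, to place the midpoint in the $\delta$-core you need a run of length $>2\delta$, not $>\delta$ (the midpoint must have $[-\delta,\delta]v$ on both sides inside the same class). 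Even with the sharper bound $m\le 1+\sum_i(|\nu_i|+1)$ on hyperplane crossings, the inequality $\Delta/m>2\delta$ fails for small $n$ and $\|v\|$; try $n=2$, $v=(1,1)$.

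The paper avoids this by a different localization. Rather than letting $a$ range over all of $[-\tfrac12\Delta,\tfrac12\Delta]$ and counting crossed classes along the way, it first restricts attention to the cube $T=\{y:\rho(x,y)\le\tfrac12\Delta\}$, which has side length $\Delta$ and hence meets at most $2^n$ of the $E_\Delta$-classes, \emph{independently of $v$}. The set $\{a\in\mathbb Z: av\cdot x\in T\}$ is then an interval of length at least $\Delta/\|v\|>2^{n+1}(\delta+1)$, and pigeonhole over $\le 2^n$ classes immediately yields a run of length $\ge 2(\delta+1)>2\delta$. The constant $2^{n+1}$ in the hypothesis is calibrated precisely to this $2^n$ bound; your direct hyperplane count produces a class count depending on $\|v\|$, so the single factor of $\|v\|$ in the hypothesis gets consumed in the wrong place. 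You should also isolate and use explicitly the observation that if $[a-\delta,a+\delta]v\cdot x\subseteq R$ for a single class $R$ then $av\cdot x\in C_\delta$; this is what makes the midpoint step work for \emph{all} faces of $R$ simultaneously, and it is exactly where the assumption that every $\nu_i\neq 0$ enters.
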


\begin{proof} First note that for any $E_\Delta$-class $R$, if $[-\delta, \delta]v\cdot x\subseteq R$, then $x\in C_\delta\cap R$. 
Suppose $x\in F(2^{\mathbb{Z}^n})$ is arbitrary. Consider 
$$ T=\{y\in F(2^{\mathbb{Z}^n})\colon \rho(x,y)\leq \frac{1}{2}\Delta\}. $$
Then $T$ is an $n$-dimensional rectangle with side length $\Delta$ in each direction. $T$ has $x$ at its center, and since $\Delta>2^{n+1}(\delta+1)\|v\|$, the interval
$$ \{a\in\mathbb{Z}\colon  av\cdot x\in T\}$$ has length at least $2^{n+1}(\delta+1)$. Note that $T$ meets at most $2^n$ many $E_\Delta$-classes $R$. It follows that for some such $R$, there is an interval $I$ of length $2\delta$ such that $Iv\cdot x\subseteq R\cap T$, and in particular there is $a\in [-\frac{1}{2}\Delta, \frac{1}{2}\Delta]$ such that $av\cdot x\in C_\delta$. 
\end{proof}

Now enumerate $S$ as
$$v_1,\ \dots,\ v_{m_0},\ \alpha_{1,1}e_1,\ \dots,\ \alpha_{1, m_1}e_1,\ \dots,\ \alpha_{n,1}e_n,\ \dots,\ \alpha_{n, m_n}e_n, $$
for natural numbers $m_0, m_1,\dots, m_n$, integers $\alpha_{i,j}$ for $1\leq i\leq n$ and $1\leq j\leq m_i$, where $v_1, \dots, v_{m_0}$ enumerate all elements of $S$ whose coordinates are all non-zero. Without loss of generality assume $m_0, m_1, \dots, m_n$ are positive.

We now fix the parameters. Let $D_1$ be given by Theorem~\ref{thm:strongZngen}. Let $\mu_0=2D_1$. For each $1\leq k\leq m_0$, inductively define
$$  \mu_k=\mu_{k-1}+\delta(n,d_0,v_k), $$
where $\delta(n, d_0, v_k)$ is given by Lemma~\ref{lem:1directionslanted2}. Finally, let 
$$\Delta>2^{n+1}(\mu_{m_0}+1)(\|v_1\|+\dots+\|v_{m_0}\|)$$ be an even multiple of $D_1$.

We are now ready to define the clopen marker set $M$. First we define a refinement of $E_\Delta$. This is done by dividing each side of an $E_\Delta$-class into intervals of length $D_1$ or $D_1+1$ (with at most one such interval of length $D_1+1$), which gives rise to a division of each $E_\Delta$-class into $n$-dimensional rectangles with side lengths either $D_1$ or $D_1+1$. Note that this is possible since $\Delta$ is a multiple of $D_1$. We denote the resulting equivalence relation $F$. Note that $F$ is clopen and each $F$-class is an $n$-dimensional rectangle with side length either $D_1$ or $D_1+1$. Note that Theorem~\ref{thm:strongZngen} can be proved for this setup, and we obtain a clopen subset $\tilde{M}\subseteq F(2^{\mathbb{Z}^n})$ such that
\begin{enumerate}
\item[(1)] for any distinct $x,y\in \tilde{M}$ in the same orbit, $\rho(x,y)\geq d_0$;
\item[(2)] for any $1\leq i\leq n$, $1\leq j\leq m_i$ and $x\in F(2^{\mathbb{Z}^n})$, there are non-negative integers $a_{i,j}, b_{i,j}\leq D_1$ such that $a_{i,j}\alpha_{i,j}e_i\cdot x\in \tilde{M}$ and $-b_{i,j}\alpha_{i,j}e_i\cdot x\in \tilde{M}$.
\end{enumerate}

Before we continue the definition of $M$ we need the following definition. For each $n$-dimensional rectangle $R$ and for any $1\leq i\leq n$, let $F_i(R)=\{x\in R\colon e_i\cdot x\not\in R\}$ and refer to it as the {\em upper face} of $R$ in the direction $e_i$. 

Let $X$ be the union of all $F$-classes $U$ so that for some $E_\Delta$-class $R$ and $1\leq i\leq n$, $U\cap F_i(R)\neq\varnothing$. $X$ is a clopen subset of $F(2^{\mathbb{Z}^n})$. Let $M_0=\tilde{M}\cap X$. Then we have
\begin{enumerate}
\item[(3)] for any distinct $x,y\in M_0$ in the same orbit, $\rho(x,y)\geq d_0$;
\item[(4)] for any $1\leq i\leq n$, $1\leq j\leq m_i$ and $x\in F(2^{\mathbb{Z}^n})$, there are non-negative integers $a_{i,j}, b_{i,j}\leq 2\Delta+1$ such that $a_{i,j}\alpha_{i,j}e_i\cdot x\in M_0$ and $-b_{i,j}\alpha_{i,j}e_i\cdot x\in M_0$.
\end{enumerate}
Here (3) follows from (1). For (4), note that for any $x\in F(2^{\mathbb{Z}^n})$ and $1\leq i\leq n$, there are non-negative integers $a, b\leq \Delta+1$ such that $ae_i\cdot x\in X$ and $-be_i\cdot x\in X$. Now (4) follows from (2). 

For each $0\leq k\leq m_0$, let 
$$ K_k=C_{\mu_k} $$
where $C_\delta$ is as defined before Lemma~\ref{lem:hit}. Then for any $E_\Delta$-class $R$,
$$ K_0\cap R\supseteq K_1\cap R\supseteq K_2\cap R\supseteq \cdots \supseteq K_{m_0}\cap R. $$
For $1\leq k\leq m_0$, since $\mu_{k}-\mu_{k-1}=\delta(n,d_0, v_k)$, each $K_{k}\cap R$ is the $\delta(n,d_0,v_k)$-core of $K_{k-1}\cap R$. Now for each $1\leq k\leq m_0$ we apply Lemma~\ref{lem:1directionslanted2} with $n, d_0, v_k$ to $K_k$ to obtain a subset $M_k\subseteq K_{k-1}\setminus K_k$ such that
\begin{enumerate}
\item[(5)] for any distinct $x, y\in M_k$, $\rho(x,y)\geq d$ and $\rho(x, [x]\setminus K_{k-1})\geq d$;
\item[(6)] for any $E_\Delta$-class $R$ and $x\in K_k\cap R$, there is $a\in \mathbb{Z}$ such that $x+av\in M_k\cap R$.
\end{enumerate}

Now let $M=\bigcup_{0\leq k\leq m_0} M_k$. For each $1\leq k\leq m_0$, since
$$ \Delta>2^{n+1}(\mu_k+1)\|v_k\|, $$
Lemma~\ref{lem:hit} can be applied to guarantee that 
\begin{enumerate}
\item[(7)] for any $x\in F(2^{\mathbb{Z}^n})$, there is an integer $a\in [-\frac{1}{2}\Delta, \frac{1}{2}\Delta]$ such that $av_{k}\cdot x\in K_{k}$. 
\end{enumerate}
Combining (6) and (7), we get that for any $1\leq k\leq m_0$ and any $x\in F(2^{\mathbb{Z}^n})$, there are non-negative integers $a_k, b_k\leq 2\Delta+1$ such that $a_kv_k\cdot x\in M$ and $-b_kv_k\cdot x\in M$. Then by (3) and (5), we have that for any distinct $x, y\in M$, $\rho(x,y)\geq d_0$. 

This completes the proof of Theorem~\ref{thm:maingen}.

\subsection{Applications}{\ }

Let $n\geq 1$ and let $S\subseteq \mathbb{Z}^n$ be a generating set. The {\em Schreier graph} on $F(2^{\mathbb{Z}^n})$ with generating set $S$ is the graph $G$ with vertex set $F(2^{\mathbb{Z}^n})$ and the edge set defined as 
$$ \{x,y\}\in E(G)\iff \exists g\in S\ (g\cdot x=y \mbox{ or } g\cdot y=x). $$

In \cite{GWW25} it was shown that for any $n\geq 1$ and generating set $S\subseteq \mathbb{Z}^n$, there is a continuous proper edge $(2|S|+1)$-coloring of the Schreier graph on $F(2^{\mathbb{Z}^n})$ with generating set $S$. In fact, if $S$ does not contain the identity and $S$ is symmetric, i.e., $g^{-1}\in S$ whenever $g\in S$, then the continuous edge chromatic number of the Schreier graph on $F(2^{\mathbb{Z}^n})$ with generating set $S$ is exactly $2|S|+1$.

By a proof similar to that of Theorem~\ref{thm:coloring}, we have the following corollary.

\begin{corollary}[\cite{GWW25}] Let $n\geq 1$ and let $S\subseteq \mathbb{Z}^n$ be a finite generating set. Suppose for each $g\in S$, $\{1\leq i\leq n\colon \pi_i(g)\neq 0\}$ has size either $1$ or $n$. Then there is a continuous proper edge $(2|S|+1)$-coloring of the Schreier graph on $F(2^{\mathbb{Z}^n})$ with generating set $S$. 
\end{corollary}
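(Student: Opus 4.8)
The plan is to mimic the proof of Theorem~\ref{thm:coloring}, replacing the standard generators $e_i$ by the elements of $S$. First I would fix $d_0=100$ and apply Theorem~\ref{thm:maingen} to the given generating set $S$ (whose elements satisfy the hypothesis that their support has size $1$ or $n$) to obtain a positive integer $\Delta$ and a clopen marker set $M\subseteq F(2^{\mathbb{Z}^n})$ with the two listed properties: any two distinct points of $M$ in the same orbit have $\rho$-distance at least $100$, and for every $g\in S$ and every $x$ there are non-negative integers $a,b\leq\Delta$ with $ag\cdot x\in M$ and $-bg\cdot x\in M$. Enumerate $S=\{g_1,\dots,g_{|S|}\}$; the edges of the Schreier graph then split into $|S|$ ``parallel classes,'' one for each generator, and we will use a palette of $2|S|+1$ colors.

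Next, for each $x\notin M$ and each $1\leq r\leq|S|$, let $a^x_r\leq\Delta$ be the least non-negative integer with $a^x_r g_r\cdot x\in M$ and $b^x_r\leq\Delta$ the least with $-b^x_r g_r\cdot x\in M$; both are well-defined and depend clopen-ly on $x$. For an edge $\{x,y\}$ with $g_r\cdot x=y$ (choosing the orientation, using that $S$ may be assumed to have $g_r\neq \overline 0$; if $g_r$ and $-g_r$ both lie in $S$ one simply treats them as separate generators, which only helps) I would define the color exactly as in Theorem~\ref{thm:coloring}: using colors $r$ and $|S|+r$ according to the parity of $b^x_r$, with a three-case split keyed on whether $a^x_r+b^x_r$ is even, or odd with $a^x_r>10$, or odd with $a^x_r=10$, or odd with $a^x_r<10$, and reserving the single color $2|S|+1$ for the (rare) case $a^x_r+b^x_r$ odd and $a^x_r=10$. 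Continuity of $c$ is immediate since $M$ is clopen and $a^x_r,b^x_r$ are locally constant.

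The properness argument is the heart of the matter and goes just as before. Two adjacent edges $\{x,y\},\{y,z\}$ either lie in different parallel classes (say classes $r\neq s$) — then their colors lie in disjoint pairs $\{r,|S|+r\}$ versus $\{s,|S|+s\}$, unless one of them got color $2|S|+1$; but color $2|S|+1$ is used only at a marker-point configuration with $a^x_r=10$, and because consecutive marker points along direction $g_r$ are $\geq 100$ apart, any two $2|S|+1$-colored edges are far apart in the Schreier graph, so in particular not adjacent. If both edges lie in the same parallel class $r$, then $y=g_r\cdot x$ and $z=g_r\cdot y=2g_r\cdot x$ (or the reversed chain), so the two $(a,b)$-data differ by a shift of $1$ along direction $g_r$; the arithmetic bookkeeping on parities and on the threshold value $10$ — identical to the computation in the proof of Theorem~\ref{thm:coloring} — shows the two colors differ. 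The main obstacle, such as it is, is simply to check that this parity/threshold bookkeeping survives verbatim when $e_i$ is replaced by an arbitrary generator $g_r$; since the only facts used are that marker points along a fixed generator direction are $\geq 100$ apart and that the hit-distances are bounded by $\Delta$, both guaranteed by Theorem~\ref{thm:maingen}, the argument transfers without change. Hence $c$ is a continuous proper edge $(2|S|+1)$-coloring, completing the proof.
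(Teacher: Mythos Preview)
Your approach is exactly what the paper intends---its own proof is the single sentence ``by a proof similar to that of Theorem~\ref{thm:coloring}''---but the specific constants $d_0=100$ and threshold $10$ do not transfer verbatim, and your claim that ``the argument transfers without change'' is too quick. Property~(1) of $M$ gives $\rho$-distance $\geq d_0$ between markers, but what the three-case scheme actually needs is that consecutive markers in direction $g_r$ are at least (say) $12$ $g_r$-\emph{steps} apart; with $d_0=100$ and $\|g_r\|>9$ an odd step-count $L\leq 10$ is possible, and then the edge $\{m,g_r\cdot m\}$ at a marker $m$ (Case~1 with $a=b=0$, color $r$) and the next edge (Case~3 since $a=L-1<10$ and $b=1$ is odd, also color $r$) collide. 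You need $d_0$ proportional to $\max_{g\in S}\|g\|$.

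More seriously, your assertion that two $(2|S|{+}1)$-colored edges are never adjacent does not follow from ``markers are far apart.'' If such edges in directions $g_r\neq g_s$ share a vertex $w$, the two associated markers lie in $\{w+9g_r,\,w+10g_r\}$ and $\{w+9g_s,\,w+10g_s\}$; once $d_0$ is large enough these are forced to coincide, which for $r\neq s$ means $10g_r=9g_s$ (or the symmetric relation). For standard generators this is impossible, but for $S$ containing parallel elements in this ratio---e.g.\ $9e_1,10e_1\in S$---this configuration can genuinely occur in $M$, and your coloring is then improper. The fix is easy: replace $10$ by a threshold $T$ (and enlarge $d_0$ accordingly) avoiding $Tg_r=(T{-}1)g_s$ for every pair in $S$; since $S$ is finite, only finitely many $T$ are excluded. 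With both constants so adjusted, your argument goes through.
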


We also have the following conclusion for dimension $2$. 

\begin{corollary}[\cite{GWW25}] Let $S\subseteq \mathbb{Z}^2$ be any generating set. Then there is a continuous proper edge $(2|S|+1)$-coloring of the Schreier graph on $F(2^{\mathbb{Z}^2})$ with generating set $S$.
\end{corollary}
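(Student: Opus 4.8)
The plan is to reduce the statement to the preceding corollary, whose hypothesis on the generating set becomes vacuous when $n=2$. We may assume $S$ is finite. First I would note that for any $g\in\mathbb{Z}^2$ the set $\{1\leq i\leq 2\colon \pi_i(g)\neq 0\}$ has size $0$, $1$, or $2$, and it has size $0$ exactly when $g=\overline{0}$. So the first real step is to discard the identity: if $\overline{0}\in S$, replace $S$ by $S'=S\setminus\{\overline{0}\}$. The identity contributes no edges to the Schreier graph, since for distinct $x,y$ we have $\overline{0}\cdot x=x\neq y$; hence the Schreier graph on $F(2^{\mathbb{Z}^2})$ with generating set $S'$ is literally the same graph $G$ as with generating set $S$. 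Moreover $S'$ is still a generating set (adjoining $\overline{0}$ does not change the subgroup generated, and $S'\neq\varnothing$ since $\{\overline{0}\}$ does not generate $\mathbb{Z}^2$), and $2|S'|+1\leq 2|S|+1$, so any continuous proper edge $(2|S'|+1)$-coloring of $G$ is in particular a continuous proper edge $(2|S|+1)$-coloring. Thus I may assume $\overline{0}\notin S$, and then every $g\in S$ has $\{1\leq i\leq 2\colon \pi_i(g)\neq 0\}$ of size $1$ or $2$, so the hypothesis of the preceding corollary holds with $n=2$, and that corollary immediately yields the desired coloring.

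For completeness I would also recall the mechanism behind the preceding corollary, which is exactly the argument of Theorem~\ref{thm:coloring} with Theorem~\ref{thm:maingen} used in place of Theorem~\ref{thm:main}. One fixes a large $d_0$ (say $d_0=100$), takes the resulting clopen strong marker set $M$, and for $x\notin M$ and $g\in S$ lets $a_g^x,b_g^x\leq\Delta$ be the least non-negative integers with $a_g^x g\cdot x\in M$ and $-b_g^x g\cdot x\in M$. After choosing one representative $g$ from each pair $\{g,-g\}$ that occurs among the edges of $G$, one reserves two ``main'' colors per representative, used to $2$-color (according to the parity of $b_g^x$) the edges parallel to $g$ along each maximal segment of an orbit between consecutive markers in the direction $g$; the single parity clash arising on a segment of odd length is absorbed by the extra color $2|S|+1$, placed at one fixed position in that segment. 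Because $2|S|+1$ is used only at widely separated spots and all other colors are determined by the chosen direction alone, the coloring is proper; being clopen, it is continuous.

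The hard part here is essentially nil: the real work sits in Theorem~\ref{thm:maingen}, which supplies the clopen strong markers, and in the coloring scheme of Theorem~\ref{thm:coloring}; both are already in hand. What remains to check is only the harmless bookkeeping around removing the identity from $S$ and the elementary fact that the ``size $1$ or $n$'' condition on generators holds automatically when $n=2$.
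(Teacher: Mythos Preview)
Your proof is correct and follows the same approach as the paper: both reduce to the preceding corollary by observing that for $n=2$ the hypothesis on generators is automatic. You are a bit more careful than the paper in explicitly disposing of the cases where $S$ is infinite or contains $\overline{0}$, but the core argument is identical.
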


\begin{proof} Note that for any $g\in \mathbb{Z}^2$ with $g\neq (0,0)$, $|\{1\leq i\leq 2\colon \pi_i(g)\neq 0\}|$ is either $1$ or $2$. Thus the condition in Theorem~\ref{thm:strongZngen} always holds.
\end{proof}

The proofs to these corollaries are different from those in \cite{GWW25}.

\end{document}